\newtheorem{Thm}{Theorem}[section]
\newtheorem{Lem}[Thm]{Lemma}
\newtheorem{Cor}[Thm]{Corollary}
\newtheorem{Prop}[Thm]{Proposition}
\newtheorem{Def}[Thm]{Definition}
\theoremstyle{definition}
\newcommand{\CC}{\mathbb{C}}
\newcommand{\PP}{\mathbb{P}}
\newcommand{\Z}{\mathbb{Z}}
\newcommand{\N}{\mathbb{N}}
\newcommand{\C}{\mathbb{C}}
\newcommand{\df}{\colon}
\newcommand{\cA}{{\mathcal A}}
\newcommand{\cF}{{\mathcal F}}
\newcommand{\cM}{{\mathcal M}}
\newcommand{\cP}{{\mathcal P}}
\newcommand{\g}{\mathfrak{g}}
\newcommand{\n}{\mathfrak{n}}
\newcommand{\h}{\mathfrak{h}}
\newcommand{\ba}{\mathbf{a}}
\newcommand{\bb}{\mathbf{b}}
\newcommand{\bd}{\mathbf{d}}
\newcommand{\be}{\mathbf{e}}
\newcommand{\bi}{{\mathbf i}}
\newcommand{\bj}{{\mathbf j}}
\newcommand{\bm}{{\mathbf m}}
\newcommand{\br}{{\mathbf r}}
\newcommand{\bs}{{\mathbf s}}
\newcommand{\bt}{{\mathbf t}}
\newcommand{\bx}{{\mathbf x}}
\newcommand{\bz}{{\mathbf z}}
\newcommand{\vep}{\varepsilon}
\newcommand{\la}{\lambda}
\newcommand{\de}{\delta}
\newcommand{\rk}{\operatorname{rank}}
\newcommand{\mx}{{\rm max}}
\newcommand{\rep}{\operatorname{rep}}
\newcommand{\pdim}{\operatorname{proj.dim}}
\newcommand{\idim}{\operatorname{inj.dim}}
\newcommand{\Hom}{\operatorname{Hom}}
\newcommand{\Ext}{\operatorname{Ext}}
\newcommand{\Fl}{\operatorname{Fl}}
\newcommand{\bil}[1]{\langle #1\rangle}
\newcommand{\Irr}{\operatorname{Irr}}
\newcommand{\supp}{\operatorname{supp}}
\newcommand{\bsm}{\begin{smallmatrix}}
\newcommand{\esm}{\end{smallmatrix}}
\newcommand{\bbsm}{\left[\begin{smallmatrix}}
\newcommand{\besm}{\end{smallmatrix}\right]}
\newcommand{\bbm}{\begin{matrix}}
\newcommand{\ebm}{\end{matrix}}
\newcommand{\diag}{{\rm diag}}
\newcommand{\GL}{\operatorname{GL}}
\newcommand{\vp}{{\rm l.f.}}
\def\lf{\mathrm{l.f.}}
\def\<{\langle\,}
\def\>{\,\rangle}
\newcommand{\rkv}{\underline{\rk}}
\def\Gr{{\rm Gr}}
\def\Si{\Sigma}
\def\AA{\mathcal{A}}
\begin{document}

\title[Quivers with relations for symmetrizable Cartan matrices V]
{Quivers with relations for symmetrizable Cartan matrices V:
Caldero-Chapoton formulas}

\author{Christof Gei{\ss}}
\address{Christof Gei{\ss}\newline
Instituto de Matem\'aticas\newline
Universidad Nacional Aut{\'o}noma de M{\'e}xico\newline
Ciudad Universitaria\newline
04510 M{\'e}xico D.F.\newline
M{\'e}xico}
\email{christof.geiss@im.unam.mx}
\thanks{The first named author acknowledges partial support from 
CoNaCyT grant no. 239255.}

\author{Bernard Leclerc}
\address{Bernard Leclerc\newline
Normandie Univ, Unicaen\newline
CNRS, LMNO, UMR 6139\newline
F-14032 Caen Cedex\newline
France}
\email{bernard.leclerc@unicaen.fr}

\author{Jan Schr\"oer}
\address{Jan Schr\"oer\newline
Mathematisches Institut\newline
Universit\"at Bonn\newline
Endenicher Allee 60\newline
53115 Bonn\newline
Germany}
\email{schroer@math.uni-bonn.de}

\subjclass[2010]{13F60, 16G20 (primary); 17B35, 20G05 (secondary)}


\begin{abstract}
We generalize the Caldero-Chapoton formula for cluster algebras of finite type 
to the skew-symmetrizable case. This is done by replacing representation 
categories of Dynkin quivers by categories of locally free modules over certain
Iwanaga-Gorenstein algebras introduced in~\cite{GLS1}.
The proof relies on the realization of the positive part of the enveloping
algebra of a simple Lie algebra of the same finite type as a convolution 
algebra of constructible functions on representation varieties of $H$, 
given in~\cite{GLS3}. Along the way, we obtain a new result on the PBW basis 
of this convolution algebra. 
\end{abstract}

\maketitle

\setcounter{tocdepth}{1}
\numberwithin{equation}{section}
\tableofcontents

\parskip2mm


\section{Introduction and main results}\label{sec:intro}


Fomin and Zelevinsky \cite{FZ2} have shown that the classification of cluster 
algebras of finite type is identical to the Cartan-Killing classification of 
semisimple Lie algebras and finite root systems.
For cluster algebras of type $A$, $D$, $E$, Caldero and Chapoton~\cite{CC} 
proved a geometric formula for the cluster expansions of cluster variables 
with respect to an acyclic initial seed. If $Q$ is the quiver (of the same 
type $A$, $D$, $E$) attached to this initial seed, the coefficients of the 
cluster expansion are equal to the Euler characteristics of the Grassmannians 
of sub-representations of the indecomposable representations of $Q$. In this 
paper, we generalize the Caldero-Chapoton formula to all finite types by
replacing representation categories of Dynkin quivers by categories of locally 
free modules over certain
Iwanaga-Gorenstein algebras introduced in \cite{GLS1}.

Let us state our result more precisely.
Let $H = H(C,D,\Omega)$ be the Iwanaga-Gorenstein algebra introduced 
in~\cite[Section 1.4]{GLS1}.
(The definition of $H$ is recalled in Section~\ref{sec:recallpreproj}.) 
Here $C=(c_{ij})$ is an $n\times n$ Cartan matrix, $D=\diag(c_i)$ a symmetrizer 
for $C$ (that is, $DC$ is a symmetric positive definite matrix), 
and $\Omega$ an acyclic orientation.
We take as base field the field $\C$ of complex numbers. 

Let $\rep_\lf(H)\subseteq \rep(H)$ be the exact subcategory of locally free 
$H$-modules \cite[Section 1.5]{GLS1}.
We denote by $\<\cdot,\cdot\>_H$ its homological bilinear form, defined 
in~\cite[Section 4]{GLS1}.
For $M,N\in\rep_\lf(H)$ the integer $\<M,N\>_H$ depends only on the rank 
vectors $\rkv(M)$
and $\rkv(N)$. We denote by $E_i$ the indecomposable locally free $H$-module 
with rank vector
$\rkv(E_i) = (\de_{ij}\mid 1\le j\le n)$.

By \cite[Theorem 1.3]{GLS1}, the map $M \mapsto \rkv(M)$ induces a bijection 
between the set of 
isomorphism classes of indecomposable locally free rigid $H$-modules and the 
set $\Delta^+(C)$ of positive roots for $C$.
We denote by $M(\beta)\in \rep_\lf(H)$ the indecomposable rigid module with 
rank vector the positive root $\beta$.

Given $M\in\rep_\lf(H)$ and $\br\in \N^n$, let $\Gr_\lf(\br,M)$ be the 
quasi-projective variety
of locally free submodules $N$ of $M$ with rank vector $\rkv(N) = \br$.
We denote by $\chi(\Gr_\lf(\br,M))$ its Euler characteristic.

Let $B=(b_{ij})$ be the $n\times n$ skew-symmetrizable matrix defined by
\[
 b_{ij} = 
 \left\{
 \begin{matrix}
c_{ij} & \mbox{if $(j,i)\in\Omega$,}\\ 
-c_{ij} & \mbox{if $(i,j)\in\Omega$,}\\
0 & \mbox{otherwise.}
 \end{matrix}
\right.
\]
Let $R = \Z[u_1^{\pm1},\ldots,u_n^{\pm1}]$ be the Laurent polynomial ring in $n$ 
indeterminates $u_1,\ldots,u_n$. 
Let $\AA\subset R$ be the coefficient-free cluster algebra with initial seed 
$\Si = (B,(u_1,\ldots,u_n))$. (Note that $\Si$ is acyclic.)
By \cite{FZ2} this is a skew-symmetrizable cluster algebra of the same Cartan 
type as $C$,
whose cluster variables $x(\beta)$ (other than $u_1,\ldots, u_n$) are  
naturally labelled by the positive roots $\beta \in \Delta^+(C)$.

Inspired by \cite[Section 3.1]{CC} we make the following definition.
\begin{Def}\label{def-1.1}
For $M\in\rep_\lf(H)$  set 
\[
 X_M = \sum_{\br\in\N^n} \chi(\Gr_\lf(\br,M)) \prod_{i=1}^n v_i^{-\<\br,\,\rkv(E_i)\>_H-\<\rkv(E_i),\,\rkv(M)-\br\>_H},
\]
where $v_i := u_i^{1/c_i}$.
\end{Def}
It is easy to check that $X_M$ is a Laurent polynomial in the variables $u_i$ 
(see below Lemma~\ref{Lem-5.1}).
The aim of this paper is to prove the following theorem.

\begin{Thm}\label{Thm1}
 \begin{itemize}
  \item[(a)] For every locally free module $M$ we have $X_M \in \AA$.
  \item[(b)] For every locally free modules $M$ and $N$, we have 
  $X_M\cdot X_N = X_{M\oplus N}$.
  \item[(c)] The map $M\mapsto X_M$ induces a bijection between isomorphism 
  classes of locally free indecomposable 
  rigid $H$-modules and cluster variables of $\AA$ other than $u_1,\ldots,u_n$.
  More precisely, for every $\beta\in\Delta^+(C)$ there holds: 
  \[
   X_{M(\beta)} = x(\beta).
  \]
  \item[(d)] The map $M\mapsto X_M$ induces a bijection between isomorphism 
classes of locally free   rigid $H$-modules and cluster monomials of $\AA$ 
which do not contain the cluster variables $u_1,\ldots,u_n$.
 \end{itemize}
\end{Thm}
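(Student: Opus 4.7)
My plan is to prove (b) first, then (c)---the heart of the theorem and the place where the convolution algebra framework enters---and finally deduce (a) and (d) from them. For (b), the exponent of $v_i$ in each term of $X_M$ is linear in $\rkv(M)$ and $\br$ through $\langle\cdot,\cdot\rangle_H$, so the monomial prefactor for $M\oplus N$ at sub-rank $\br=\br_1+\br_2$ factors as the product of the prefactors for $(M,\br_1)$ and $(N,\br_2)$. It therefore suffices to establish the Euler-characteristic convolution
\[
\chi(\Gr_\lf(\br,M\oplus N))=\sum_{\br_1+\br_2=\br}\chi(\Gr_\lf(\br_1,M))\,\chi(\Gr_\lf(\br_2,N)),
\]
which I would obtain by stratifying $\Gr_\lf(\br,M\oplus N)$ by the pair $(\rkv(L\cap M),\rkv((L+M)/M))$ and showing each stratum is an affine bundle over $\Gr_\lf(\br_1,M)\times\Gr_\lf(\br_2,N)$, following the Dynkin template of \cite{CC}. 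The new input is to verify that the locally-free condition propagates along this stratification, which should follow from the structural results on $\rep_\lf(H)$ in \cite{GLS1}.

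For (c) the main tool is the realization from \cite{GLS3} of $U(\mathfrak{n})^+$ as a convolution algebra $\cM$ of constructible functions on $\rep_\lf(H)$, together with its PBW basis indexed by $\Delta^+(C)$. My steps are: first, construct an evaluation map $\Psi\colon\cM\to R$ sending the characteristic function $\delta_M$ of the orbit of a locally free $M$ to $X_M$, which by (b) is multiplicative on rigid orbits; second, identify each PBW basis vector with $\delta_{M(\beta)}$ up to a scalar---presumably the new PBW result alluded to in the abstract; third, establish a Caldero-Keller type multiplication formula
\[
X_X\cdot X_Y = X_L + X_{L'}
\]
for rigid indecomposables $X,Y\in\rep_\lf(H)$ with $\dim_\C\Ext^1_H(X,Y)=1$ and $L,L'$ the two non-split middle terms; fourth, verify the base case $X_{E_i}=x(\alpha_i)$ by a direct computation against the $\Sigma$-mutation of $u_i$. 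The exchange recursion that generates all cluster variables from $\Sigma$ then forces $X_{M(\beta)}=x(\beta)$ for every $\beta\in\Delta^+(C)$, and the bijection in (c) follows by combining this identity with \cite[Theorem 1.3]{GLS1}. The main technical obstacle is the third step: since $H$ is not hereditary, second extensions may appear, and one must verify that the moduli of extensions of locally free modules remain torsors for $\Ext^1_H$ on the locally free stratum, so that the stratification of the submodule Grassmannian of $X\oplus Y$ recovers the Euler characteristics of $\Gr_\lf$ on $L$ and $L'$.

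For (a), I would decompose $M\in\rep_\lf(H)$ into indecomposable summands and apply (b): rigid summands contribute elements of $\cA$ by (c), and any non-rigid indecomposable locally-free summand can be handled by writing its orbit characteristic function in $\cM$ as a polynomial in the generators $\delta_{E_i}$, whose $\Psi$-images are explicit Laurent polynomials in $\cA$. For (d), by Krull-Schmidt and \cite[Theorem 1.3]{GLS1} a rigid locally-free $H$-module decomposes as $\bigoplus_\beta M(\beta)^{m_\beta}$ over a compatible (i.e.\ pairwise $\Ext$-orthogonal) set of positive roots; by (b) and (c) such a module maps to the cluster monomial $\prod_\beta x(\beta)^{m_\beta}$ of $\cA$, which does not involve $u_1,\ldots,u_n$. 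Injectivity comes from Krull-Schmidt, and surjectivity from the Fomin-Zelevinsky parametrization of cluster monomials in finite type by compatible sets of positive roots.
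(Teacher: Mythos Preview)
Your plan for (b) is close in spirit to the paper's, though the paper avoids the affine-bundle stratification by a cleaner $\C^*$-action: scaling only the $M$-summand of $M\oplus N$ gives a $\C^*$-action on $\Gr_\lf(\br,M\oplus N)$ whose fixed locus is $\bigsqcup_{\br_1+\br_2=\br}\Gr_\lf(\br_1,M)\times\Gr_\lf(\br_2,N)$, and Krull--Schmidt for free $H_i$-modules is all that is needed to see the fixed points are locally free.

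Your approach to (c), however, has a genuine gap. The paper explicitly remarks that it ``does not seem possible to extend the proof of \cite{CC}'' to this setting, and your step~3---a Caldero--Keller type exchange identity $X_X X_Y=X_L+X_{L'}$ when $\dim\Ext^1_H(X,Y)=1$---is precisely the kind of argument this refers to. You correctly flag the obstacle (non-hereditariness of $H$), but you do not resolve it: controlling the locally-free locus inside the extension variety and showing the two non-split middle terms account for all of the Euler characteristic is not carried out, and the paper does not attempt it. Instead the paper proceeds via $g$-vectors and $F$-polynomials: one writes $X_M=u^{g_M}F_M(z)$, computes $g_{M(\beta)}$ from the injective coresolution, and then uses Theorem~\ref{Thm2} to identify $\delta_{M(\beta)}\in\cM(H)^*_{\rm gr}$ with a dual PBW root vector, hence $\varphi_{M(\beta)}\in\C[N]$ with a generalized minor. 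This yields a closed formula for $F_{M(\beta)}$ as a minor evaluated on an explicit product of one-parameter subgroups, which is then matched, via commutation relations in $G$, with the Yang--Zelevinsky expression for the cluster $F$-polynomial $F_\beta$.

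There is also a recurring confusion between $\cM(H)$ and its graded dual in your outline. The orbit characteristic function $1_M$ lies in $\cF(H)$ but is \emph{not} in general in $\cM(H)$; the multiplicative object is the evaluation form $\delta_M\in\cM(H)^*_{\rm gr}$, and $\delta_{M\oplus N}=\delta_M\cdot\delta_N$ refers to the commutative product on the dual, not to convolution. This undermines your sketch of (a): one cannot express $1_M$ as a polynomial in the $1_{E_i}$. The paper's argument for (a) is that $\delta_M$ is a polynomial in the dual-PBW generators $\delta_{M(\beta)}$, so $F_M$ is a polynomial of the right multidegree in the $F_{M(\beta)}$, and hence $X_M$ is a polynomial in the cluster variables $X_{M(\beta)}$. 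Your outline of (d) is essentially correct; the paper implements it via Zhu's cluster-category description of compatibility for modulated graphs together with the $\Ext$-comparison Proposition~\ref{GLS3-Prop5.5}.
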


Note that when $C$ is symmetric and $D=I_n$ the identity matrix, the algebra 
$H$ is isomorphic to the path algebra
over $\C$ of the Dynkin quiver given by $C$ and the orientation $\Omega$. 
In this case all $H$-modules are locally free, and we have $v_i = u_i$. 
Moreover, indecomposable modules over this path algebra
are always rigid. Thus, one sees that Theorem~\ref{Thm1}(c) then reduces
to the classical Caldero-Chapoton formula \cite[Theorem 3.4]{CC}. 

However, it does not seem possible to extend the proof of \cite{CC}
to our setup.
We will instead rely on the results of \cite{GLS3} and \cite{YZ}. 
Here is a sketch of our proof.
By \cite{FZ4}, cluster expansions of cluster variables are completely 
determined by their $g$-vectors and their $F$-polynomials.
In \cite{YZ} a description of the $g$-vectors and $F$-polynomials of the 
cluster variables of $\AA$ (with 
respect to its initial cluster $(u_1,\ldots,u_n)$) is given in terms of the 
simply-connected complex algebraic group $G$ with
the same Cartan type as $\AA$. Let $N$ be a maximal unipotent subgroup of $G$, 
with Lie algebra $\n$.
In \cite{GLS3} we have given a geometric construction of the enveloping 
algebra $U(\n)$ as a convolution algebra $\cM(H)$ of 
constructible functions on representation varieties of locally free 
$H$-modules. In particular
for every positive root $\beta$ we obtained a primitive element $\theta_\beta$ 
of $\cM(H)$ such that 
$\theta_\beta(M(\beta)) = 1$ \cite[Theorem 6.1]{GLS3}. 
Write $\Delta^+(C) = \{\beta_1,\ldots,\beta_r\}$.
The normalized ordered products 
\[
 \theta_\ba := \theta_{\beta_1}^{(a_1)} * \cdots * \theta_{\beta_r}^{(a_r)},\qquad (\ba \in \N^r)
\]
then form a Poincar\'e-Birkhoff-Witt basis of $\cM(H)$. 
Putting 
\[
M_\ba := M(\beta_1)^{a_1} \oplus \cdots \oplus M(\beta_r)^{a_r},\qquad (\ba \in \N^r),
\]
we then prove: 
\begin{Thm}\label{Thm2}
Let the elements of $\Delta^+(C)$ be numbered so that 
$\Hom_H(M(\beta_i),M(\beta_j))= 0$ if $i<j$.
(This is possible in view of \cite[Corollary 5.8]{GLS3}.) 
For every  $\ba, \bb \in \N^r$, we have 
\[
 \theta_\ba(M_\bb) = \delta_{\ba,\bb}.
\]
\end{Thm}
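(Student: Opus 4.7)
The plan is to reduce the identity $\theta_\ba(M_\bb)=\delta_{\ba,\bb}$ to the atomic case $\theta_\bc(M(\beta_k))=\delta_{\bc,\be_k}$ (where $\be_k$ denotes the $k$-th standard basis vector of $\N^r$) by iterated use of the Hopf algebra structure on $\cM(H)\cong U(\n)$, and then to treat the atomic case by means of the $\Hom$-vanishing hypothesis. Since each $\theta_\beta$ is primitive in $\cM(H)$ by \cite{GLS3}, divided powers satisfy
$$\Delta(\theta_\beta^{(a)})=\sum_{a'+a''=a}\theta_\beta^{(a')}\otimes\theta_\beta^{(a'')},$$
and multiplicativity of $\Delta$ together with the primitivity yields
$$\Delta(\theta_\ba)=\sum_{\ba'+\ba''=\ba}\theta_{\ba'}\otimes\theta_{\ba''}.$$
Because the coproduct of $\cM(H)$ corresponds to evaluation on direct sums, we obtain
$$\theta_\ba(M\oplus N)=\sum_{\ba'+\ba''=\ba}\theta_{\ba'}(M)\,\theta_{\ba''}(N).$$
Iterating this identity for $M_\bb=\bigoplus_{k=1}^r M(\beta_k)^{b_k}$ expresses $\theta_\ba(M_\bb)$ as a sum of products $\prod_k\theta_{\ba^{(k)}}(M(\beta_k)^{b_k})$ over decompositions $\ba=\sum_k\ba^{(k)}$ in $\N^r$; a further induction on $b_k$ using $M(\beta_k)^{b_k}=M(\beta_k)\oplus M(\beta_k)^{b_k-1}$ then reduces everything to the atomic identity.

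For the atomic case, $\bc=\be_k$ gives $\theta_{\beta_k}(M(\beta_k))=1$ by \cite[Theorem~6.1]{GLS3}. For $\bc\ne\be_k$, the function $\theta_\bc$ is supported on modules of rank vector $\sum_i c_i\beta_i$, so it suffices to consider $\bc$ with $\sum_i c_i\beta_i=\beta_k$; using that each $\beta_i\in\N^n\setminus\{0\}$, this condition together with $\bc\ne\be_k$ forces $c_k=0$, hence $\supp(\bc)\subseteq\{1,\dots,r\}\setminus\{k\}$ is nonempty. Writing $\supp(\bc)=\{i_1<\cdots<i_s\}$, the convolution formula presents $\theta_\bc(M(\beta_k))$ as an Euler-characteristic integral over filtrations $0=U_0\subset U_1\subset\cdots\subset U_s=M(\beta_k)$ of locally free submodules with $\rkv(U_\ell/U_{\ell-1})=c_{i_\ell}\beta_{i_\ell}$, weighted by $\prod_\ell\theta_{\beta_{i_\ell}}^{(c_{i_\ell})}(U_\ell/U_{\ell-1})$. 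The primitivity of $\theta_{\beta_{i_\ell}}$, the uniqueness of the locally free indecomposable of rank $\beta_{i_\ell}$ from \cite[Theorem~1.3]{GLS1}, and the rigidity $\Ext^1_H(M(\beta_{i_\ell}),M(\beta_{i_\ell}))=0$ force each nonzero subquotient to be isomorphic to $M(\beta_{i_\ell})^{c_{i_\ell}}$. Since $k\notin\supp(\bc)$, either $i_s>k$ or $i_1<k$. In the former case, the surjection $M(\beta_k)\twoheadrightarrow M(\beta_k)/U_{s-1}\cong M(\beta_{i_s})^{c_{i_s}}$ composed with a summand projection yields a nonzero element of $\Hom_H(M(\beta_k),M(\beta_{i_s}))$, contradicting the ordering since $k<i_s$. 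In the latter case, the embedding $U_1\cong M(\beta_{i_1})^{c_{i_1}}\hookrightarrow M(\beta_k)$ composed with a summand inclusion gives a nonzero element of $\Hom_H(M(\beta_{i_1}),M(\beta_k))$, again contradicting the ordering since $i_1<k$. Either way the flag variety is empty, so $\theta_\bc(M(\beta_k))=0$.

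The main obstacle is the structural claim used above: that $\theta_{\beta_{i_\ell}}^{(c_{i_\ell})}(Q)\ne 0$ on a locally free module $Q$ of rank vector $c_{i_\ell}\beta_{i_\ell}$ forces $Q\cong M(\beta_{i_\ell})^{c_{i_\ell}}$. This requires combining the primitivity of $\theta_{\beta_{i_\ell}}$ (which kills mixed decomposables of pure rank $\beta_{i_\ell}$), the Gabriel-type uniqueness of locally free indecomposables of positive-root rank from \cite[Theorem~1.3]{GLS1}, and iterated use of $\Ext^1$-vanishing to promote a filtration by copies of $M(\beta_{i_\ell})$ into a direct sum decomposition.
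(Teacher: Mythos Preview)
Your Hopf-algebra reduction of the general statement to the atomic case $\theta_\bc(M(\beta_k))=\delta_{\bc,\be_k}$ is correct and is essentially the same argument as the paper's treatment of the general case in the proof of Theorem~\ref{thm:PBW1}.

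The gap is in your atomic case. You invoke ``the Gabriel-type uniqueness of locally free indecomposables of positive-root rank from \cite[Theorem~1.3]{GLS1}'' to force each subquotient on which $\theta_{\beta_i}^{(c_i)}$ is nonzero to be isomorphic to $M(\beta_i)^{c_i}$. That theorem classifies the \emph{$\tau$-locally free} (equivalently, locally free \emph{rigid}) indecomposables, not all locally free indecomposables. In general there exist non-rigid locally free indecomposables whose rank vector is a positive root, and the primitive element $\theta_\beta$ can be nonzero on them: the paper's own $G_2$ example (Section~\ref{sec:examples}) produces an indecomposable $M_2\not\cong M(3\alpha_1+2\alpha_2)$ of rank $3\alpha_1+2\alpha_2$ lying in the support of $\theta_{3\alpha_1+2\alpha_2}$. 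So the structural claim you flag as ``the main obstacle'' is not merely delicate---it is false, and with it the conclusion that $U_1\cong M(\beta_{i_1})^{c_{i_1}}$ or $M(\beta_k)/U_{s-1}\cong M(\beta_{i_s})^{c_{i_s}}$ collapses.

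The repair is to drop the isomorphism-type claim and use only the rank vector together with the Euler form, exactly as in Lemma~\ref{lem:filt3}: if $Q$ is locally free of rank $c\beta$, then $\bil{Q,M(\beta)}_H=c\bil{\beta,\beta}_H>0$, so $\Hom_H(Q,M(\beta))\neq 0$; dually $\Hom_H(M(\beta),Q)\neq 0$. Feeding this into your two cases gives the needed nonzero maps $M(\beta_k)\to M(\beta_{i_s})$ (when $i_s>k$) and $M(\beta_{i_1})\to M(\beta_k)$ (when $i_1<k$), and the contradiction with the ordering goes through. Interestingly, with this fix your argument is somewhat leaner than the paper's: the paper first builds, via reflection functors and root combinatorics (Theorem~\ref{thm:filt7} and Lemma~\ref{lem:filt2}), a \emph{descending} filtration of $M(\beta_k)$ in order to manufacture an injection $M(\beta_s)\hookrightarrow M(\beta_k)$, which it then pairs with the ascending filtration to obtain a Hom-cycle; your case split at the top and bottom of the ascending filtration avoids the descending filtration altogether.
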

Note that in contrast to the classical quiver theory, the support of the 
constructible function $\theta_\ba$ is
not in general reduced to the orbit of $M_\ba$ in its representation variety.
So Theorem~\ref{Thm2} is not at all obvious. We prove it by establishing the 
existence of certain filtrations of $M(\beta)$ (Theorem~\ref{thm:filt7}).

Let $\cM(H)^*_{\rm gr}$ denote the graded dual of $\cM(H)$, and for a locally 
free $H$-module $M$,
denote by $\delta_M \in \cM(H)^*_{\rm gr}$ the linear form given by evaluation 
at $M$. 
By Theorem~\ref{Thm2} the basis of $\cM(H)^*_{\rm gr}$ dual to the 
PBW-basis consists of all (commutative) monomials in the linear forms 
$\delta_{M(\beta)}$. 
To prove Theorem~\ref{Thm1}, we introduce for every locally free $H$-module $M$ 
its $F$-polynomial
\begin{equation}\label{eq1:1}
 F_M(t_1,\ldots,t_n) := \sum_{\br\in\N^n} \chi(\Gr_\lf(\br,M)) t_1^{r_1}\cdots t_n^{r_n}, 
\end{equation}
and we give another expression of $F_M$ in terms of $\de_M$ 
(Proposition~\ref{Prop4-2}). 
We then show using Theorem~\ref{Thm2} that $\delta_{M(\beta)}$ is equal to the 
restriction to $N$ of a generalized minor 
of $G$, and hence obtain an expression of $F_{M(\beta)}$ as the evaluation of 
this minor on a certain product of one-parameter subgroups of $N$ 
(Corollary~\ref{cor-F-pol}). 
This expression of $F_{M(\beta)}$ is very similar
to the expression of the $F$-polynomial of $x(\beta)$ given in \cite{YZ}. 
Using some commutation
relations in $G$ together with classical properties of the generalized minors, 
we can show that $F_{M(\beta)}$
is in fact equal to the $F$-polynomial of $x(\beta)$ (Theorem~\ref{Thm:11.3}). 
This is the main step in the proof of Theorem~\ref{Thm1}.  

We note that our proof also provides a representation-theoretic interpretation 
of $g$-vectors.
Indeed, for a locally free $H$-module $M$ with injective co\-resolution 
\[
 0 \to M \to \bigoplus_{1\le k\le n} I_k^{a_k} \to \bigoplus_{1\le k\le n} I_k^{b_k} \to 0,
\]
where $I_k$ denotes the injective hull of $E_k$, define 
\begin{equation}\label{eq1:2}
g_M := (b_k-a_k)_{1\le k\le n} \in \Z^n.
\end{equation}
We show (Proposition~\ref{prop:g-vector}) that the $g$-vector of $x(\beta)$ 
coincides with $g_{M(\beta)}$.
To summarize, we have:
\begin{Thm}
For $\beta\in\Delta^+(C)$, the $F$-polynomial and $g$-vector of the cluster 
variable $x(\beta)$ with respect to the initial seed 
$(u_1,\ldots,u_n)$ are $F_{M(\beta)}$ and $g_{M(\beta)}$, as defined by 
equations (\ref{eq1:1}) and (\ref{eq1:2}).
\end{Thm}

We conclude this introduction by mentioning related work in the literature.
Several methods have already been explored to obtain generalizations of the 
Caldero-Chapoton formula to the skew-symmetrizable case.
Demonet \cite{D} has used $\Gamma$-equivariant categories of modules over 
preprojective algebras (where $\Gamma$ is the cyclic group generated by a 
diagram automorphism) to obtain cluster characters for acyclic symmetrizable 
cluster algebras in the spirit of~\cite{GLSAdv}, see~\cite[Theorem C]{D}.
In \cite{R1,R2}, Rupel has used categories of representations of valued 
quivers over finite fields
to obtain a quantum analogue of the Caldero-Chapoton formula for acyclic 
symmetrizable cluster algebras.
Theorem~\ref{Thm1} provides yet another approach based on categories of 
locally free $H$-modules.

The paper is organized as follows. Section~\ref{sec:recallpreproj} recalls the 
definition of the algebras $H$ and of the convolution algebras $\cM(H)$, 
as well as the main results from~\cite{GLS1,GLS3} which we will need. 
In Section~\ref{sect:multiplicative} we prove Theorem~\ref{Thm1}~(b).
Sections~\ref{sec:filtrations} and \ref{sec:PBW} contain the proof of 
Theorem~\ref{Thm2}.
In Section~\ref{sect-Fpoly-gvect} we introduce the $g$-vector $g_M$ and the 
$F$-polynomial $F_M$ of a locally free
$H$-module $M$, and we express $X_M$ in terms of $g_M$ and $F_M$. 
Section~\ref{sec:gvectors} shows 
that $g_{M(\beta)}$ is equal to the $g$-vector of the cluster variable $x(\beta)$.
Sections~\ref{sec:otherexprFM}, \ref{sec:varphi-minor}, and~\ref{sec:compare} 
contain the proof that $F_{M(\beta)}$ is equal to the 
$F$-polynomial of $x(\beta)$. 
Section~\ref{sec:thm1d} concludes with the proof of Theorem~\ref{Thm1}~(d).
Finally Section~\ref{sec:examples} illustrates Theorem~\ref{Thm1} with examples.


\section{Quivers with relations associated with Cartan matrices}\label{sec:recallpreproj}


In this section, we recall some definitions and results from \cite{GLS1}.

\subsection{The algebras $H(C,D,\Omega)$}\label{subsec:DefH}
Let $C = (c_{ij}) \in M_n(\Z)$ be a Cartan matrix. This means that
\begin{itemize}

\item[(C1)]
$c_{ii} = 2$ for all $i$;

\item[(C2)] 
$c_{ij} \le 0$ for all $i \not= j$;

\item[(C3)]
There is a diagonal integer matrix $D = \diag(c_1,\ldots,c_n)$ with
$c_i \ge 1$ for all $i$ such that
$DC$ is a symmetric \emph{positive definite} matrix. 

\end{itemize}
The matrix $D$ appearing in (C3) is called a \emph{symmetrizer} of $C$.
The symmetrizer $D$ is \emph{minimal} if $c_1 + \cdots + c_n$ is
minimal. 

An \emph{orientation of} $C$ is a subset 
$\Omega \subset \{1,\ldots,n\} \times \{1,\ldots,n\}$
such that  
the following hold:

\begin{itemize}

\item[(i)]
$\{ (i,j),(j,i) \} \cap \Omega \not= \varnothing$
if and only if $c_{ij}<0$;

\item[(ii)]
If $(i,j)\in\Omega$ then $(j,i)\not\in\Omega$.


\end{itemize}
For an orientation $\Omega$ of $C$ let
$Q := Q(C,\Omega) := (Q_0,Q_1,s,t)$ be the quiver with the
set of vertices $Q_0 := \{ 1,\ldots, n\}$ and 
with the set of arrows 
\[
Q_1 := \{ \alpha_{ij}\df j \to i \mid (i,j) \in \Omega \}
\cup \{ \vep_i\df i \to i \mid 1 \le i \le n \}.
\]
(Thus we have $s(\alpha_{ij}) = j$ and $t(\alpha_{ij}) = i$ and
$s(\vep_i) = t(\vep_i) = i$, where $s(a)$ and $t(a)$ denote the
starting and terminal vertex of an arrow $a$, respectively.)
Let $Q^\circ := Q^\circ(C,\Omega)$ be the quiver obtained from $Q$ by deleting
all loops $\vep_i$.
Clearly,
$Q^\circ$ is an acyclic quiver.

For an orientation $\Omega$ of $C$ and some $1 \le i \le n$ let
\[
s_i(\Omega) :=
\{ (r,s) \in \Omega \mid i \notin \{r,s\} \}
\cup \{ (s,r) \in \Omega^* \mid i \in \{r,s\}  \},
\]
where $\Omega^* := \{(s,r) \mid (r,s) \in \Omega\}$ denotes the orientation 
opposite to $\Omega$.

For a quiver $Q = Q(C,\Omega)$ and a symmetrizer $D = \diag(c_1,\ldots,c_n)$ of 
$C$, let
\[
H := H(C,D,\Omega) := KQ/I
\] 
where $KQ$ is the path algebra of $Q$ over a field $K$, and $I$ is the ideal 
of $KQ$ defined by the following relations:
\begin{itemize}

\item[(H1)]
For each $i$ we have
\[
\vep_i^{c_i} = 0;
\]

\item[(H2)]
For each $(i,j) \in \Omega$ we have
\[
\vep_i^{|c_{ji}|}\alpha_{ij} = \alpha_{ij}\vep_j^{|c_{ij}|}.
\]

\end{itemize}

Let $\rep(H)$ be the category of finite-dimensional $H$-modules.
Such a module $M$ is given by a $K$-vector space $M_i$ at each vertex $i$ of 
$Q$, and linear maps $M(a) : M_{s(a)}\to M_{t(a)}$ for each arrow $a$ of $Q$ 
satisfying the defining relations of $H$. In particular the endomorphism 
$M(\varepsilon_i)$ endows $M_i$ with the structure of an $H_i$-module, where
\[
H_i := K[\vep_i]/(\vep_i^{c_i})
\] 
is a truncated polynomial ring.
An $H$-module $M$ is \emph{locally free} if $M_i$ is a free
$H_i$-module for all $i$.
The rank of a free $H_i$-module $M_i$ is denoted by $\rk(M_i)$.
For a locally free $H$-module $M$ let $\rkv(M) := (\rk(M_1),\ldots,\rk(M_n))$
be the \emph{rank vector} of $M$.
Let $\rep_\vp(H) \subseteq \rep(H)$ be the subcategory of
locally free $H$-modules. 
In particular, let $E_i := H_i$ seen as an $H$-module.

We refer to \cite{GLS1} for further details.

\subsection{Recollection of results}\label{subsec:recoll}

\begin{Thm}[{\cite[Theorem~1.2]{GLS1}}]\label{thma:GLS1}
The algebra $H$ is a $1$-Iwanaga-Gorenstein algebra.
For $M \in \rep(H)$ the following are equivalent:
\begin{itemize}

\item[(i)]
$\pdim(M) \le 1$;

\item[(ii)]
$\idim(M) \le 1$;

\item[(iii)]
$\pdim(M) < \infty$;

\item[(iv)]
$\idim(M)  < \infty$;

\item[(v)]
$M$ is locally free.

\end{itemize}
\end{Thm}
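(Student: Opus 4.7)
The plan is to reduce the whole statement to two main computations: constructing explicit short projective resolutions of the modules $E_i$, and showing that any module of finite projective dimension is necessarily locally free. Once these are established together with the $1$-Iwanaga-Gorenstein property, the equivalences (i)$\Leftrightarrow$(ii)$\Leftrightarrow$(iii)$\Leftrightarrow$(iv) follow formally.

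First I would analyze the structure of the indecomposable projective modules $P_i = He_i$ by reading a $K$-basis from the admissible paths in $Q$ starting at $i$ modulo the relations (H1) and (H2). Since the loop relations $\vep_i^{c_i}=0$ and the commutation relations $\vep_i^{|c_{ji}|}\alpha_{ij}=\alpha_{ij}\vep_j^{|c_{ij}|}$ respect the vertex grading, a direct computation shows that each $(P_i)_j$ is a free $H_j$-module. Thus every $P_i$ is locally free, and dually so is every indecomposable injective $I_i$. Exhibiting explicit short exact sequences $0\to P_i\to I^0_i\to I^1_i\to 0$ with $I^0_i,I^1_i$ injective (built from the arrows in $\Omega$ incident at $i$) proves $\idim({}_HH)\le 1$, and the symmetric argument gives $\idim(H_H)\le 1$. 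This is the $1$-Iwanaga-Gorenstein property, from which standard homological algebra (as in Auslander--Reiten or Happel) yields (i)$\Leftrightarrow$(ii)$\Leftrightarrow$(iii)$\Leftrightarrow$(iv) together with the bound $\le 1$ whenever the dimension is finite.

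For (v)$\Rightarrow$(i), I would construct for each vertex $i$ an explicit short exact sequence of the form
\[
0 \to \bigoplus_{(j,i)\in\Omega} P_j^{|c_{ij}|} \to P_i \to E_i \to 0,
\]
where the left map is built from the arrows $\alpha_{ji}$ and appropriate powers of $\vep$. This shows $\pdim(E_i)\le 1$. Then for a general locally free module $M$, I would build an ascending filtration by $H$-submodules whose successive quotients are copies of the various $E_i$; this is done inductively by choosing a sink $i$ of the support subquiver of $M$ and splitting off an $H$-submodule isomorphic to $E_i^{\rk(M_i)}$. Additivity of projective dimension in short exact sequences then gives $\pdim(M)\le 1$.

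The remaining implication (iii)$\Rightarrow$(v) is the main obstacle. I would argue contrapositively: suppose $M$ has finite projective dimension but some $M_i$ is not free over $H_i=K[\vep_i]/(\vep_i^{c_i})$. The key observation is that each projective $P_j$ restricts at vertex $i$ to a free $H_i$-module, so every term of a projective resolution of $M$ is $H_i$-free when restricted to vertex $i$. This forces $M_i$ itself to be $H_i$-free, because the truncated polynomial algebra $H_i$ is self-injective, and a module admitting a bounded resolution by $H_i$-free modules must itself be $H_i$-free. Making this restriction argument rigorous -- in particular verifying that the differentials in any projective resolution of $M$ restrict to $H_i$-linear maps between the $i$-components, and tracking the local rank carefully -- is the main technical content of the proof.
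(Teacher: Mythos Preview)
This theorem is not proved in the present paper; it is quoted from \cite[Theorem~1.2]{GLS1} in the section entitled ``Recollection of results'' and used as a black box. So there is no proof here to compare against.

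That said, your outline is essentially the argument carried out in \cite{GLS1}. The two key ingredients you identify are exactly the ones used there: explicit length-one projective resolutions of the generalized simples $E_i$ (built from the arrows of $\Omega$ incident to $i$), and the observation that the restriction functor $M\mapsto M_i$ from $\rep(H)$ to $\rep(H_i)$ is exact and sends projectives to free $H_i$-modules. Since $H_i=K[\vep_i]/(\vep_i^{c_i})$ is self-injective, any $H_i$-module of finite projective dimension is free, which gives (iii)$\Rightarrow$(v). One minor comment on your filtration step in (v)$\Rightarrow$(i): with the paper's arrow conventions ($\alpha_{ij}\colon j\to i$ for $(i,j)\in\Omega$), a sink $i$ of the support of $M$ indeed yields an $H$-\emph{submodule} $M_i\cong E_i^{\rk(M_i)}$, and the quotient remains locally free, so the induction goes through; just be careful that ``splitting off'' is not the right word, since this subquotient decomposition is generally not a direct sum decomposition of $H$-modules.
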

Define a bilinear form 
\[
\bil{-,-}_H\df \Z^n \times \Z^n \to \Z
\] 
by 
\[
\bil{\alpha_i,\alpha_j}_H := 
 \left\{
 \begin{matrix}
c_ic_{ij} & \mbox{if $(j,i)\in\Omega$,}\\
c_{i} & \mbox{if $i=j$,}\\
0 & \mbox{otherwise.}
 \end{matrix}
\right.
\]
(Here $\alpha_1,\cdots,\alpha_n$ denotes the standard basis of $\Z^n$.)
For $M,N \in \rep_\vp(H)$ we have
\[
\bil{\rkv(M),\rkv(N)}_H = \dim \Hom_H(M,N) - \dim \Ext_H^1(M,N),
\]
see~\cite[Section~4]{GLS1}. 
An $H$-module $M$ is said to be \emph{rigid} if $\Ext_H^1(M,M) = 0$.

Let $\tau_H$ be the Auslander-Reiten
translation for the algebra $H$, and let $\tau_H^-$ be the  
inverse Auslander-Reiten translation.
An indecomposable $H$-module $M$ is \emph{preprojective}
(resp. \emph{preinjective}) if there exists some
$k \ge 0$ such that $M \cong \tau_H^{-k}(P)$ (resp.
$M \cong \tau_H^k(I)$) for some indecomposable projective
$H$-module $P$ (resp. indecomposable injective
$H$-module $I$). 
(The usual definition of a preprojective or preinjective module $M$
requires some additional conditions on the Auslander-Reiten
component containing $M$.)
An indecomposable $H$-module $M$ is called $\tau$-\emph{locally free}, if
$\tau_H^k(M)$ is locally free for all $k \in \Z$.

Since we assume that $C$ is a Cartan matrix, we can state: 

\begin{Thm}[{\cite[Theorem~1.3]{GLS1}}]\label{thmb:GLS1b}
There are only finitely many isomorphism classes of $\tau$-locally free 
$H$-modules, and the following hold:
\begin{itemize}

\item[(i)]
The map 
$M \mapsto \rkv(M)$ 
yields a bijection between the set of isomorphism classes of 
$\tau$-locally free $H$-modules
and the set $\Delta^+(C)$ of positive roots of the semisimple 
complex Lie algebra associated with $C$.

\item[(ii)]
For an indecomposable $H$-module $M$ the following are equivalent:
\begin{itemize}

\item[(a)]
$M$ is preprojective;

\item[(b)]
$M$ is preinjective;

\item[(c)]
$M$ is $\tau$-locally free;

\item[(d)]
$M$ is locally free and rigid.

\end{itemize}
\end{itemize}
\end{Thm}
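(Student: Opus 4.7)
The plan is to transport the Gabriel--Bernstein--Gelfand--Ponomarev strategy to the setting of locally free modules over $H$, using rank vectors in place of dimension vectors. The two main ingredients are reflection functors implementing the simple reflections of the Weyl group on $\Z^n$, and a Coxeter functor realizing the Auslander--Reiten translation $\tau_H^-$ on rank vectors; together with the finiteness of $\Delta^+(C)$ guaranteed by the finite Cartan type hypothesis on $C$, they produce a bijective correspondence between preprojective (equivalently preinjective, equivalently $\tau$-locally free) indecomposables and $\Delta^+(C)$.

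First I would define, for each sink $i$ of $Q^\circ(C,\Omega)$, a reflection functor $F_i^+\df \rep_\lf(H(C,D,\Omega))\to\rep_\lf(H(C,D,s_i(\Omega)))$ in the BGP style, modified so that the new module $F_i^+(M)$ carries a compatible action of $\vep_i$. Concretely, $F_i^+(M)_j=M_j$ for $j\neq i$, while $F_i^+(M)_i$ is the kernel of a sum map
\[
\bigoplus_{(i,j)\in\Omega} M_j\otimes_{H_j}{}_iH_j \longrightarrow M_i
\]
built from a transition $H_i$-$H_j$-bimodule encoding relation (H2) between $\vep_i^{|c_{ji}|}$ and $\vep_j^{|c_{ij}|}$. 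The essential technical step is to verify, using (H1)--(H2), that if $M$ has no direct summand isomorphic to $E_i$ then $F_i^+(M)$ is locally free and $\rkv(F_i^+(M))=s_i(\rkv(M))$, where $s_i(\alpha_j)=\alpha_j-c_{ji}\alpha_i$. Dual functors $F_i^-$ are defined at sources.

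Next, fixing an admissible sequence $(i_1,\dots,i_n)$ of sinks (updating $\Omega$ at each step), I would form the Coxeter functor $T^+:=F_{i_n}^+\circ\cdots\circ F_{i_1}^+$ on $\rep_\lf(H)$ and prove an APR-style isomorphism $T^+\cong\tau_H^-$ on locally free modules without projective summand. Because $C$ is of finite Cartan type the Coxeter element $c=s_{i_n}\cdots s_{i_1}$ has finite order on $\Z^n$, and every positive root arises as $c^{-k}(\alpha_j)$ for suitable $j$ and some $k\ge 0$; iterating $T^+$ starting from the indecomposable projectives $P_j$ thus produces, on each Coxeter orbit, exactly one $\tau$-locally free indecomposable per positive root. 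This simultaneously yields the bijection in (i) and, since every such orbit meets both a projective and an injective, the equivalences (a)$\Leftrightarrow$(b)$\Leftrightarrow$(c).

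The equivalence of (c) and (d) is the delicate part. For (c)$\Rightarrow$(d) I would use the Auslander--Reiten formula $\Ext_H^1(M,M)\cong\ov{\Hom}_H(M,\tau_H M)^*$ together with the fact that for $M$ indecomposable preprojective $\tau_H M$ lies in a strictly earlier slice of the AR quiver, forcing $\Hom_H(M,\tau_H M)=0$. The hard part will be (d)$\Rightarrow$(c): one must rule out indecomposable locally free rigid modules outside the preprojective component. I would exploit the Euler identity $\bil{\rkv(M),\rkv(M)}_H=\dim\End_H(M)-\dim\Ext_H^1(M,M)$: rigidity and local freeness, combined with the positive definiteness of the symmetrized form $DC$, force $\rkv(M)$ to be a positive root. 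Part (i) then supplies a preprojective $\tau$-locally free $N$ with $\rkv(N)=\rkv(M)$, and a dimension count on the Grassmannians $\Gr_\lf(\br,-)$ together with Krull--Schmidt in $\rep_\lf(H)$ forces $M\cong N$, closing the cycle of implications.
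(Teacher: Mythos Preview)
This theorem is not proved in the present paper; it is quoted from \cite[Theorem~1.3]{GLS1}, so there is no proof here to compare against directly. Your outline is nonetheless broadly in line with the methods of \cite{GLS1}: reflection functors adapted to the bimodule structure encoded by (H1)--(H2), identification of the Coxeter functor with $\tau_H^{\pm 1}$ on locally free modules without projective (resp.\ injective) summands, and exploitation of the finite Coxeter orbit structure in Dynkin type to obtain (i) and the equivalences (a)$\Leftrightarrow$(b)$\Leftrightarrow$(c). The implication (c)$\Rightarrow$(d) via the Auslander--Reiten formula is also along the right lines.

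Your argument for (d)$\Rightarrow$(c), however, has a real gap. First, positive definiteness of $DC$ only gives $\bil{\rkv(M),\rkv(M)}_H>0$ for \emph{every} nonzero rank vector, so it cannot by itself single out roots; you would need an a priori bound on $\dim\End_H(M)$, but indecomposable locally free $H$-modules are not bricks in general (already $\End_H(E_i)\cong H_i$ has dimension $c_i$), so this step does not go through as stated. Second, even granting $\rkv(M)=\rkv(N)$ for some preprojective $N$, your proposed ``dimension count on the Grassmannians $\Gr_\lf(\br,-)$'' is not a mechanism for concluding $M\cong N$: dimensions or Euler characteristics of submodule varieties do not separate isomorphism classes. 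The argument that actually works is geometric: $\rep_\lf(H,\br)$ is irreducible (this fact is invoked elsewhere in the present paper, see the proof of Lemma~\ref{lem:filt4}), and a rigid locally free module has a dense orbit in it; hence there is at most one rigid locally free isoclass per rank vector, and it must coincide with the preprojective one already produced by the Coxeter functor.
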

For $\beta \in \Delta^+(C)$, let $M(\beta)$ be the
indecomposable locally free rigid $H$-module with $\rkv(M(\beta)) = \beta$.
Note that for $1 \le i \le n$ we have $M(\alpha_i) = E_i$.

\subsection{Representations of modulated graphs} \label{subsec:species}

With the datum of $(C,D,\Omega)$ and of a field $F$ having field extensions 
$F_i$ of degree $c_i$ for every $1\le i\le n$, 
we can associate as in~\cite[Section 2]{GLS3} a \emph{modulated graph} in the 
sense of Dlab and Ringel \cite{DR1}.
Let $T$ denote, as in~\cite[Section 2]{GLS3}, the tensor $F$-algebra attached 
to this modulated graph. This is a finite-dimensional hereditary $F$-algebra of
finite representation type given by the Cartan type of $C$.
For each positive root $\beta\in\Delta^+(C)$ we write $X(\beta)$ for the unique 
(up to isomorphism) 
indecomposable $T$-module with dimension vector $\beta$. 

In the sequel we will use the following comparison result proved in \cite{GLS3}.

\begin{Prop}[{\cite[Proposition 5.5]{GLS3}}]\label{GLS3-Prop5.5}
For every $\beta, \gamma \in \Delta^+(C)$, we have
\begin{eqnarray*}
\dim_\CC\Hom_H(M(\beta),M(\gamma)) &=& \dim_F\Hom_T(X(\beta),X(\gamma)),\\[1mm]
\dim_\CC\Ext^1_H(M(\beta),M(\gamma)) &=& \dim_F\Ext^1_T(X(\beta),X(\gamma)).
\end{eqnarray*}
\end{Prop}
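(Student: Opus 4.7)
The plan is to deduce both equalities from (i) an Euler form comparison, (ii) a reduction along $\tau$-orbits to the projective case, and (iii) a direct computation for projective modules.

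First I would compare the Ringel/Euler forms on the two sides. By \cite[Section~4]{GLS1}, the bilinear form $\langle -,-\rangle_H$ on $\Z^n$ is defined purely in terms of the datum $(C,D,\Omega)$ and satisfies
\[
\langle\rkv(M),\rkv(N)\rangle_H=\dim_\CC\Hom_H(M,N)-\dim_\CC\Ext^1_H(M,N)
\]
for all $M,N\in\rep_\lf(H)$. The Ringel form of the hereditary $F$-algebra $T$ is given by exactly the same matrix (with $(i,j)$-entry $c_ic_{ij}$ if $(j,i)\in\Omega$, $c_i$ if $i=j$, and $0$ otherwise) and computes $\dim_F\Hom_T-\dim_F\Ext^1_T$ by the classical Ringel formula for hereditary algebras. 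Hence the two Euler differences on $(\beta,\gamma)$ agree, so it is enough to prove either the $\Hom$-equality or the $\Ext^1$-equality; the other follows automatically.

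Next I would propagate along $\tau$-orbits. By Theorem~\ref{thmb:GLS1b} every $M(\beta)$ is preprojective, so $M(\beta)\cong\tau_H^{-k}(P_j^H)$ for some $k\ge 0$ and some indecomposable projective $P_j^H$; by the parallel Dlab-Ringel AR-theory for $T$, the same $k$ and $j$ give $X(\beta)\cong\tau_T^{-k}(P_j^T)$, so the $\tau$-orbits match bijectively on the two sides. The Auslander-Reiten formula is available in both settings (on the $T$-side because $T$ is hereditary of finite type, on the $H$-side by the results of \cite{GLS1} together with Theorem~\ref{thma:GLS1}, which ensures projective dimension at most one for the modules involved). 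An induction on $k$ then reduces the statement to the base case $M(\beta)=P_j^H$, $X(\beta)=P_j^T$, in which both $\Ext^1$ terms vanish and the $\Hom$-equality becomes
\[
\dim_\CC\Hom_H(P_j^H,M(\gamma))=c_j\gamma_j=\dim_F\Hom_T(P_j^T,X(\gamma)),
\]
since on both sides $\Hom$ from the projective indecomposable at vertex $j$ is evaluation at $j$, and the $j$-th rank/dimension component $\gamma_j$ contributes total scalar dimension $c_j\gamma_j$ in each case. Combined with the Euler form identity this establishes both assertions.

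The main obstacle is the propagation step: establishing the Auslander-Reiten formula inside $\rep_\lf(H)$ in a form precise enough to transport $\Hom$ and $\Ext^1$ dimensions separately along $\tau_H$-orbits, and verifying that $\tau_H^{-1}$ is combinatorially parallel to $\tau_T^{-1}$ on rank vectors. This is where the $1$-Iwanaga-Gorenstein structure of $H$ and the reflection-functor machinery developed in \cite{GLS1} do the real work; everything else is bookkeeping once that parallel is in place.
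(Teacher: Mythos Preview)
This proposition is quoted from \cite{GLS3} and not proved in the present paper, so there is no in-paper argument to compare against. Your strategy---match the Euler forms, then propagate along $\tau$-orbits down to projectives---is the natural one and is essentially correct, but the induction as you phrase it has a gap.

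The Auslander--Reiten formula you invoke (and which the paper itself uses, see Lemma~\ref{lem:filt1}) reads
\[
\dim_\CC\Ext^1_H(M(\beta),M(\gamma)) = \dim_\CC\Hom_H(M(\gamma),\tau_H M(\beta)),
\]
and likewise over $T$. The arguments are \emph{swapped}: to control $\Ext^1$ at $(\beta,\gamma)$ you need $\Hom$ at $(\gamma,c\beta)$, with $\gamma$ now in the first slot. Hence an induction on $k$ alone (the $\tau$-level of $\beta$, with $\gamma$ arbitrary) does not close: your hypothesis speaks about pairs whose \emph{first} argument has level $<k$, whereas the AR step hands you a pair whose first argument is $\gamma$, of uncontrolled level.

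The repair is straightforward once you see the issue. Writing $h(\beta,\gamma)$ for the $\Hom$-dimension on either side, the Euler identity combined with AR gives the \emph{same} recursion in both categories:
\[
h(\beta,\gamma)=\langle\beta,\gamma\rangle_H + h(\gamma,c\beta)\ \text{ if }c\beta\in\Delta^+(C),\qquad h(\beta,\gamma)=\langle\beta,\gamma\rangle_H\ \text{ otherwise},
\]
where $c$ is the Coxeter element attached to $\Omega$, acting identically on rank vectors for $H$ and on dimension vectors for $T$ by the results of \cite{GLS1}. Two applications give $h(\beta,\gamma)=\langle\beta,\gamma\rangle_H+\langle\gamma,c\beta\rangle_H+h(c\beta,c\gamma)$, so the recursion strictly lowers the total $\tau$-level of the pair and terminates at your projective base case $h(\beta,\gamma)=\langle\beta,\gamma\rangle_H=c_j\gamma_j$. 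Equivalently, run the induction on $k+l$ rather than on $k$ alone. Your diagnosis of where the real work lies---the AR formula in $\rep_\lf(H)$ and the parallelism of $\tau_H$ and $\tau_T$ on rank vectors---is accurate; only the bookkeeping around the argument swap was missing.
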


\subsection{Convolution algebras}\label{subsec:convolution}
In this section, assume that $K = \C$.
For a dimension vector $\bd=(d_1,\ldots,d_n)\in \N^n$, let $\rep(H,\bd)$ denotes
the affine variety of $H$-modules with
dimension vector $\bd$.
Let $G(\bd) = \prod_i \GL(d_i,\C)$, which acts on $\rep(H,\bd)$
by conjugation.
Let 
\[
\cF(H) := \bigoplus_{\bd \in \N^n} \cF(H)_\bd,
\] 
where $\cF(H)_\bd$ is the $\C$-vector space of constructible
functions $\rep(H,\bd) \to \C$ taking constant values on $G(\bd)$-orbits.
For $\bd$ such that $\rep_\vp(H,\bd) \not= \varnothing$ let
\[
\br := \bd/D := (d_1/c_1,\ldots,d_n/c_n)
\]
be the associated rank vector.
Let $\rep_\vp(H,\br)$ be the subvariety of $\rep(H,\bd)$ consisting of
the locally free $H$-modules with rank vector $\br$.

The space $\cF(H)$ is endowed with an associative convolution product defined by
\[
(f*g)(M) := \sum_{m \in \C} m \cdot\chi(\{U\in \rep(H,\bd)\mid U \subseteq M \mbox{ and } f(U)g(M/U) = m \}),
\]
where $f\in \cF(H)_\bd$, $g\in \cF(H)_\be$, $M \in \rep(H,\bd+\be)$, and
$\chi$ denotes the topological Euler characteristic. (Note that the sum is 
finite because $f$ and $g$ are constructible.) 

For $M \in \rep(H)$ define $1_M \in \cF(H)$ by
\[
1_M(N) :=
\begin{cases}
1 & \text{if $M \cong N$},
\\
0 & \text{otherwise}.
\end{cases}
\]
For $1 \le i \le n$ let $\theta_i := 1_{E_i}$.
Let 
\[
\cM(H) = \bigoplus_{\bd \in \N^n} \cM(H)_\bd
\]
be the subalgebra of $\cF(H)$ generated by $\{ \theta_1,\ldots,\theta_n \}$,
where
\[
\cM(H)_\bd := \cF(H)_\bd \cap \cM(H).
\]
For $f \in \cM(H)_\bd$ let 
\[
\supp(f) := \{ M \in \rep(H,\bd) \mid f(M) \not= 0 \}
\]
be the \emph{support} of $f$.
We have $\supp(f) \subseteq \rep_\vp(H,\bd/D)$, \cite[Lemma 4.2]{GLS3}.

By \cite[Theorem~1.1]{GLS3}, there is a Hopf algebra
isomorphism $U(\n) \to \cM(H)$, where
$U(\n)$ is the enveloping algebra of the positive part $\n$ of the 
semisimple Lie algebra $\g$ with Cartan matrix $C$.
This map sends the Chevalley generator $e_i$ of $U(\n)$ to the constructible 
function $\theta_i$.

Recall that the comultiplication in $\cM(H)$ is given by
$\theta_i \mapsto \theta_i \otimes 1 + 1 \otimes \theta_i$.
A constructible function $f \in \cM(H)_\bd$ is primitive
if and only if $\supp(f)$ consists just of indecomposable 
modules,~\cite[Lemma 4.6]{GLS3}.

The algebra $\cM(H)$ is isomorphic to the enveloping algebra
$U(\cP(\cM))$ of the Lie algebra $\cP(\cM) \subset \cM(H)$ of
primitive elements.
We have a root space decomposition
\[
\cP(\cM) = \bigoplus_{\beta \in \Delta^+(C)} \cP(\cM)_\beta
\]
which (under the isomorphism $U(\n) \to \cM(H)$) corresponds
to the root space decomposition
\[
\n = \bigoplus_{\beta \in \Delta^+(C)} \n_\beta.
\]
Since we are in the Dynkin case, we have 
\[
\dim \cP(\cM)_\beta = \dim \n_\beta = 1
\]
for all $\beta \in \Delta^+(C)$.
Using this and \cite[Theorem~6.1]{GLS3} we have

\begin{Thm}[{}]\label{thmc:GLS3}
For each $\beta \in \Delta^+(C)$ there exists a unique primitive
element $\theta_\beta \in \cP(\cM)_\beta$ such that
$\theta_\beta(M(\beta)) = 1$.
\end{Thm}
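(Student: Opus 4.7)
The uniqueness and the reduction of existence to a non-vanishing statement are formal. Since $\cM(H) \cong U(\n)$ as Hopf algebras by \cite[Theorem~1.1]{GLS3} and $\n_\beta$ is one-dimensional in the Dynkin case, the weight space $\cP(\cM)_\beta$ is one-dimensional. Any two primitive elements of weight $\beta$ are therefore proportional, so the existence and uniqueness of $\theta_\beta$ normalized by $\theta_\beta(M(\beta))=1$ reduces to producing some nonzero primitive $\theta \in \cP(\cM)_\beta$ and verifying $\theta(M(\beta))\ne 0$; we then set $\theta_\beta := \theta(M(\beta))^{-1}\theta$.

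For the construction of a nonzero primitive element I would follow Lusztig's PBW recipe. Fix a reduced expression $w_0 = s_{i_1}\cdots s_{i_r}$ of the longest element of the Weyl group such that $\beta = \beta_k := s_{i_1}\cdots s_{i_{k-1}}(\alpha_{i_k})$ for some $k$. The Lusztig root vector $e_{\beta_k} \in \n_\beta$ is expressible as an iterated commutator of the Chevalley generators $e_{i_j}$ via the braid operators. Transporting through the Hopf isomorphism $U(\n) \cong \cM(H)$ yields a nonzero primitive element $\theta \in \cP(\cM)_\beta$, explicit as an iterated convolution bracket of the generators $\theta_{i_j} = 1_{E_{i_j}}$.

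The main step, and the main obstacle, is to evaluate $\theta(M(\beta))$. Unwinding the definition of the convolution product, this value is a signed Euler characteristic over a variety of flags of submodules of $M(\beta)$ with subquotients prescribed by the bracketing. The cleanest strategy is to transport the computation to the tensor algebra $T$ of the modulated graph via Proposition~\ref{GLS3-Prop5.5}, which matches $M(\beta)$ with the indecomposable $T$-module $X(\beta)$ and preserves the $\Hom$- and $\Ext^1$-dimensions that control the flag varieties. On the species side, where $T$ is a hereditary finite-type algebra, the analogous non-vanishing $\theta(X(\beta))=1$ is the classical content of the Ringel/Lusztig identification of indecomposables with PBW root vectors in the Hall algebra. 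The genuine difficulty --- absent in the symmetric ADE case treated in \cite{CC} --- is that $M(\beta)$ can have composition length strictly larger than $\mathrm{ht}(\beta)$ in terms of the simple $H$-modules, so many non-locally-free flags appear at intermediate stages and must be shown to cancel; handling these cancellations is the heart of the proof of \cite[Theorem~7.1]{GLS3}, and a completely direct argument via reflection functors $s_{i_1}$ at a sink of $Q^\circ$ would require also controlling the behaviour of such non-locally-free intermediate flags under reflection, which is why the species comparison seems preferable.
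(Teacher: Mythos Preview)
This theorem is not proved in the present paper; it is quoted from \cite[Theorem~6.1]{GLS3} together with the proof of \cite[Theorem~7.1]{GLS3}. Your formal reductions are correct: uniqueness is immediate from $\dim\cP(\cM)_\beta=1$, and existence reduces to exhibiting a nonzero primitive $\theta\in\cP(\cM)_\beta$ with $\theta(M(\beta))\ne 0$. Obtaining a nonzero primitive via the Hopf isomorphism $U(\n)\cong\cM(H)$ is also fine.

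The gap is in the non-vanishing step. You propose to transport the evaluation $\theta(M(\beta))$ to the species algebra $T$ using Proposition~\ref{GLS3-Prop5.5}. But that proposition only equates dimensions of $\Hom$ and $\Ext^1$ between \emph{rigid indecomposable} modules on the two sides; it provides no comparison of submodule lattices, flag varieties, or Euler characteristics. The value $\theta(M(\beta))$ is an alternating sum of Euler characteristics of varieties of flags $0=U_0\subset U_1\subset\cdots\subset U_s=M(\beta)$ with prescribed subquotients, where the intermediate $U_j$ range over arbitrary (typically non-locally-free, non-rigid) $H$-submodules. Nothing in Proposition~\ref{GLS3-Prop5.5} puts these in correspondence with flags of $T$-submodules of $X(\beta)$. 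Furthermore, the ``Ringel--Lusztig identification in the Hall algebra'' you invoke for $T$ lives over finite fields; there is no constructible-function convolution algebra for $T$ over a general $F$, and no specialization argument linking Hall numbers for $T$ over $\F_q$ to Euler characteristics for $H$ over $\C$ is supplied by the cited results. So the species comparison, as stated, does not close the argument.

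The construction in \cite{GLS3} proceeds instead by working directly on the $H$-side, building $\theta_\beta$ recursively from $\theta_{\alpha_i}=1_{E_i}$ via the reflection functors of \cite[\S9]{GLS1} (the same functors invoked in Lemma~\ref{lem:filt5} of this paper) and tracking their effect on the convolution product and on $M(\beta)$. This is precisely the ``direct argument via reflection functors'' you set aside; controlling the non-locally-free intermediate flags under reflection is indeed the technical heart of the matter, and it is carried out in \cite{GLS3} rather than circumvented.
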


For a finite-dimensional $\C$-vector space $V$, let $V^*$ be its dual.
Let 
\[
\cM(H)_{\rm gr}^* := \bigoplus_{\bd \in \N^n} \cM(H)_\bd^*
\]
be the \emph{graded dual} of the Hopf algebra $\cM(H)$.
For each locally free $H$-module $M$, let 
$\delta_M \in \cM(H)_{\rm gr}^*$ be the evaluation function defined by
\[
\delta_M(f) := f(M),\qquad (f\in \cM(H)).
\]
It follows from the description of the comultiplication of $\cM(H)$ 
in~\cite[Section 4.2]{GLS3}
that these delta functions are multiplicative, in the sense that for any 
locally free $H$-modules $M$ and $N$ one has
\begin{equation}\label{eq-2.1}
\delta_M \cdot \delta_N = \delta_{M\oplus N}.
\end{equation}


\section{Multiplicativity of $X_M$} \label{sect:multiplicative}

In this section, we give a direct proof that the Laurent polynomials $X_M$ have 
a similar multiplicative property, that is,
of Theorem~\ref{Thm1}~(b).
This could also be deduced from (\ref{eq-2.1}), in view of Lemma~\ref{Lem-5.1} 
and Proposition~\ref{Prop4-2}.
\begin{Lem}\label{lem3}
 Let $M$ and $N$ be locally free $H$-modules. For any rank vector 
$\br\in\N^n$ we have
 \[
  \chi(\Gr_\lf(\br,M\oplus N)) = \sum_{\bs+\bt=\br} \chi(\Gr_\lf(\bs,M))\chi(\Gr_\lf(\bt, N)).
 \]
\end{Lem}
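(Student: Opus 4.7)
The plan is to exhibit an algebraic $\CC^*$-action on the quasi-projective variety $\Gr_\lf(\br, M\oplus N)$ whose fixed point locus decomposes as
\[
\bigsqcup_{\bs+\bt=\br} \Gr_\lf(\bs,M) \times \Gr_\lf(\bt,N),
\]
and then to invoke the standard principle that for a complex algebraic $\CC^*$-action on a (quasi-projective) variety $X$ one has $\chi(X)=\chi(X^{\CC^*})$. This is the strategy used in the classical Caldero--Keller proof and adapts well to the locally free setting since local freeness is an open condition on the ordinary quiver Grassmannian.

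First, I would define the action on $M\oplus N$ by $t\cdot (m,n):=(m,tn)$. On each vertex space $(M\oplus N)_i=M_i\oplus N_i$ this is the identity on the $M_i$ summand and scalar multiplication by $t$ on the $N_i$ summand, and hence commutes with every $(M\oplus N)(\vep_j)$ and every $(M\oplus N)(\alpha_{ij})$, since these preserve the $M,N$ decomposition. Thus the action is by $H$-linear automorphisms and induces an algebraic action on the ordinary quiver Grassmannian. It preserves rank vectors and local freeness of submodules, so it restricts to $\Gr_\lf(\br, M\oplus N)$.

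Next, I would identify the fixed points. If $U\subseteq M\oplus N$ is $\CC^*$-stable and $(m,n)\in U$, then $(m,n)-(m,tn)=(0,(1-t)n)\in U$ for every $t\neq 1$, hence $(0,n)\in U$, and therefore $(m,0)\in U$. Consequently $U=(U\cap M)\oplus (U\cap N)$, and conversely every such direct sum is $\CC^*$-stable. Since rank vectors are additive on direct sums and since a direct sum of $H$-modules is locally free if and only if each summand is, the fixed point locus inside $\Gr_\lf(\br,M\oplus N)$ is canonically the disjoint union displayed above.

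Finally, applying the $\CC^*$-fixed point theorem for Euler characteristics, together with multiplicativity $\chi(X\times Y)=\chi(X)\chi(Y)$ and additivity on disjoint unions, yields
\[
\chi(\Gr_\lf(\br,M\oplus N)) = \sum_{\bs+\bt=\br}\chi(\Gr_\lf(\bs,M))\,\chi(\Gr_\lf(\bt,N)).
\]
The only substantive step is the identification of the fixed points, which is elementary once the action is written down; the torus fixed point principle itself is a standard fact. I do not expect any genuine obstacle here.
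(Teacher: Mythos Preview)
Your proposal is correct and follows essentially the same route as the paper: both define the $\CC^*$-action by scaling one summand of $M\oplus N$, identify the fixed submodules as those splitting along the decomposition, and conclude via $\chi(X)=\chi(X^{\CC^*})$. The only minor difference is that the paper justifies the step ``a direct sum is locally free iff each summand is'' by explicitly invoking Krull--Schmidt over the local rings $H_i$, whereas you state it as a fact; this is harmless since the argument is standard.
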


\begin{proof}
 Let $d=\sum_ic_ir_i$ be the corresponding dimension, and consider the ordinary 
Grassmannian parametrizing  all dimension $d$ subspaces of  the vector space 
$M\oplus N$. For $\la\in\C^*$, consider the linear 
automorphism $\phi_\la$ of $M\oplus N$ defined by
 \[
  \phi_\la(x+y) = \la x + y, \qquad (x\in M,\ y\in N).
 \]
 This yields a $\C^*$-action on the ordinary Grassmannian defined by
 \[
  \la \cdot L := \phi_\la(L), 
 \]
whose fixed points are the subspaces $L$ satisfying 
$L = (L\cap M) \oplus (L\cap N)$.

Since $\phi_\la$ is in fact an $H$-module automorphism, if $L$ is a locally free
submodule of $M\oplus N$ then
$\phi_\la(L)$ is also locally free, so this action restricts to a $\C^*$-action 
on $\Gr_\lf(\br,M\oplus N)$.
Suppose that $L$ is a locally free fixed point for this action. Then 
$L = (L\cap M) \oplus (L\cap N)$.
In particular, $L_i = (L\cap M)_i \oplus (L\cap N)_i$ for every $i$. This is a 
free $H_i$-module, so by Krull-Schmidt
$(L \cap M)_i$ and $(L\cap N)_i$ are also free over $H_i$. Hence $L\cap M$ and 
$L\cap N$ are both locally free
$H$-submodules of $M$ and $N$. Now the claimed identity follows from the fact 
that $\chi(\Gr_\lf(\br,M\oplus N))$
is equal to the Euler characteristic of its fixed point subset under the 
$\C^*$-action.
Indeed, by the above discussion this fixed point subset is isomorphic to
\[
 \bigsqcup_{\bs+\bt=\br} \Gr_\lf(\bs,M) \times \Gr_\lf(\bt, N).
\]
\end{proof}

\begin{Cor}\label{cor4}
For locally free $H$-modules
$M$ and $N$ we have
\[
 X_M \cdot X_N = X_{M\oplus N}.
\]
\end{Cor}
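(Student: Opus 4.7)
The plan is to expand the product $X_M \cdot X_N$ using the definition, reindex by setting the total rank vector $\br = \bs + \bt$, and then recognize the result as $X_{M \oplus N}$ by invoking Lemma~\ref{lem3}. The proof is essentially a bookkeeping argument that hinges on two facts: the bilinearity of $\<-,-\>_H$ in both arguments, and the additivity of rank vectors under direct sum, namely $\rkv(M \oplus N) = \rkv(M) + \rkv(N)$.

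More concretely, I would first write
\[
X_M \cdot X_N = \sum_{\bs, \bt \in \N^n} \chi(\Gr_\lf(\bs,M))\,\chi(\Gr_\lf(\bt,N)) \prod_{i=1}^n v_i^{e_i(\bs,\bt)},
\]
where $e_i(\bs,\bt)$ is the sum of the two exponents coming from $X_M$ and $X_N$. By bilinearity of $\<-,-\>_H$, this sum of exponents equals
\[
-\<\bs+\bt,\,\rkv(E_i)\>_H - \<\rkv(E_i),\,(\rkv(M)+\rkv(N))-(\bs+\bt)\>_H,
\]
so $e_i(\bs,\bt)$ depends on $(\bs,\bt)$ only through $\br := \bs+\bt$, and in fact coincides with the exponent of $v_i$ appearing in the $\br$-th term of $X_{M\oplus N}$.

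I would then group terms with fixed $\br$ and apply Lemma~\ref{lem3} to identify
\[
\sum_{\bs+\bt=\br} \chi(\Gr_\lf(\bs,M))\,\chi(\Gr_\lf(\bt,N)) = \chi(\Gr_\lf(\br, M\oplus N)),
\]
which immediately yields $X_M \cdot X_N = X_{M\oplus N}$. There is no real obstacle here: the multiplicativity statement follows by a completely formal manipulation once Lemma~\ref{lem3} is in hand. All the substantive content was absorbed into the $\C^*$-action and Białynicki-Birula-type argument in Lemma~\ref{lem3}; the corollary is merely the cosmetic check that the prefactor $\prod_i v_i^{(\cdots)}$ respects direct sums, which it does thanks to bilinearity.
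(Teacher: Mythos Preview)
Your proof is correct and follows essentially the same approach as the paper: both expand the product, use bilinearity of $\bil{-,-}_H$ to see that the monomial depends only on $\br=\bs+\bt$, and then invoke Lemma~\ref{lem3} to identify the coefficient.
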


\begin{proof}
From the definition of $X_M$ and the bilinearity of $\<\cdot,\cdot\>$ it 
follows that the monomial
\[
 \prod_{i=1}^n v_i^{-\<\br,\,\rkv(E_i)\>-\<\rkv(E_i),\,\rkv(M\oplus N)-\br\>}
\]
occurs in $X_M\cdot X_N$ with coefficient 
\[
\sum_{\bs+\bt=\br} \chi(\Gr_\lf(\bs,M))\chi(\Gr_\lf(\bt, N)). 
\]
By Lemma~\ref{lem3}, this is $\chi(\Gr_\lf(\br,M\oplus N))$, hence the result.
\end{proof}


\section{Dependence on the symmetrizer}
\label{symmetrizer}

The algebra $H=H(C,D,\Omega)$ and its category of locally free modules depend 
a lot on the choice of the symmetrizer $D$.
But as shown by Theorem~\ref{thmb:GLS1b}, the classification of the 
indecomposable rigid locally free modules does
not depend on $D$. 
Moreover, we also have the following result.

\begin{Prop}\label{prop-sym}
For every $\beta \in \Delta^+(C)$, the Laurent polynomial $X_{M(\beta)}$ is 
independent of $D$. 
\end{Prop}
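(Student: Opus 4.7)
The plan is to separate the two sources of $D$-dependence in the definition of $X_{M(\beta)}$: the exponents of the $v_i$ and the Euler characteristics of the locally free Grassmannians. I would address these two ingredients independently.

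\textbf{Exponents.} First I would show that the exponent of $u_i = v_i^{c_i}$ attached to $\br \in \N^n$ depends only on $\br,\beta,C,\Omega$. Unfolding the definition of $\<-,-\>_H$ from Section~\ref{subsec:recoll} and using the symmetry $c_ic_{ij}=c_jc_{ji}$ (which is the symmetry of $DC$), a direct computation reduces
\[
-\<\br,\rkv(E_i)\>_H - \<\rkv(E_i),\beta-\br\>_H
\]
divided by $c_i$ to
\[
-\beta_i \;-\; \sum_{k:(i,k)\in\Omega} r_k c_{ik} \;-\; \sum_{k:(k,i)\in\Omega}(\beta_k-r_k)c_{ik}.
\]
This expression is manifestly $D$-independent. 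In particular distinct $\br$ give distinct monomials, so the coefficients in the Laurent expansion of $X_{M(\beta)}$ with respect to $u_1,\ldots,u_n$ are uniquely determined.

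\textbf{Coefficients.} It remains to prove that $\chi(\Gr_\lf(\br,M(\beta)))$ is $D$-independent for every $\br$. The cleanest route, once Theorem~\ref{Thm1}(c) has been established later in the paper, is to appeal to the identification $X_{M(\beta)} = x(\beta)$: the cluster variable $x(\beta)$ is defined purely from the initial exchange matrix $B$, which only sees $C$ and $\Omega$, hence is $D$-independent, and by the uniqueness of the Laurent expansion so is each coefficient. If one prefers to avoid invoking the full strength of Theorem~\ref{Thm1}(c), one could instead use Corollary~\ref{cor-F-pol} (proved in later sections) to express the $F$-polynomial
\[
F_{M(\beta)}(t_1,\ldots,t_n) = \sum_{\br\in\N^n}\chi(\Gr_\lf(\br,M(\beta)))\, t_1^{r_1}\cdots t_n^{r_n}
\]
as the evaluation of a generalized minor of the simply-connected complex Lie group $G$ of Cartan type $C$ at a specific product of one-parameter subgroups of a maximal unipotent $N\subset G$. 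Since $G$ and its minors depend only on $C$, and the product of one-parameter subgroups is organised using $\Omega$, this expression is visibly $D$-independent.

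\textbf{Main obstacle.} The hard part is the coefficient step. A purely geometric proof --- say by producing a comparison functor $\rep_\lf H(C,D,\Omega)\to\rep_\lf H(C,D',\Omega)$ that sends $M(\beta)$ to $M(\beta)$ and preserves Euler characteristics of locally free submodule Grassmannians --- looks delicate, because the underlying algebras, their projective covers, and the dimensions $\dim_\CC M_i = c_i r_i$ all change with $D$. For this reason I would ultimately fall back on the Lie-theoretic description of $F_{M(\beta)}$ from Section~\ref{sec:varphi-minor} (equivalently on Theorem~\ref{Thm1}(c)), combining it with the computation of exponents above to conclude independence of $X_{M(\beta)}$ from $D$.
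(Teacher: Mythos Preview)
Your exponent computation is correct and matches what the paper relies on implicitly (via Lemma~\ref{Lem-5.1}). The divergence is in the coefficient step. The paper's proof is a single citation: by \cite[Corollary~1.3]{GLS2} (the ``Change of symmetrizer'' paper), each Euler characteristic $\chi(\Gr_\lf(\br,M(\beta)))$ is already known to be independent of $D$, and the proposition follows at once. So a direct geometric comparison result \emph{does} exist --- precisely the thing you flagged as ``delicate'' --- and the paper simply quotes it.

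Your proposed workaround via Theorem~\ref{Thm1}(c) is circular as the paper is organized: Proposition~\ref{prop-sym} is used to deduce Corollary~\ref{cor-sym}, which is in turn invoked at the start of Section~\ref{sec:compare} to dispose of the symmetric case of Theorem~\ref{Thm1}(c). Your alternative via Corollary~\ref{cor-F-pol} is logically sound (nothing in Sections~\ref{sec:otherexprFM}--\ref{sec:varphi-minor} depends on Proposition~\ref{prop-sym}), but it requires reordering the paper and is a far heavier argument than the one-line citation the authors actually use.

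One small correction: your claim that distinct $\br$ give distinct monomials in $X_{M(\beta)}$ is false in general. In the $B_3$ example of Section~\ref{sec:examples} the rank vectors $(0,1,0)$ and $(1,1,1)$ contribute to the same monomial because $z_2 = z_1 z_2 z_3$. Fortunately this claim is not needed for the proposition itself; once you know $X_{M(\beta)} = x(\beta)$ you are done, with no need to recover individual Euler characteristics.
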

\begin{proof}
By \cite[Corollary 1.3]{GLS2}, the Euler characteristic 
$\chi(\Gr_\lf(\br,M(\beta)))$ is independent of $D$.
The result then follows immediately from the definition of $X_{M(\beta)}$.
\end{proof}

\begin{Cor}\label{cor-sym}
If $C$ is symmetric, then Theorem~\ref{Thm1}~(c) holds.
\end{Cor}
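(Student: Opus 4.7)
The plan is to reduce Theorem~\ref{Thm1}~(c) in the symmetric case to the classical Caldero-Chapoton theorem via Proposition~\ref{prop-sym}.

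First I would note that both sides of the identity $X_{M(\beta)} = x(\beta)$ are independent of the symmetrizer $D$: the cluster variable $x(\beta)$ depends only on the skew-symmetrizable matrix $B$, which is built from $C$ and $\Omega$ alone, while $X_{M(\beta)}$ is $D$-independent by Proposition~\ref{prop-sym}. Hence it suffices to verify the formula for one convenient symmetrizer.

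When $C$ is symmetric I would take $D = I_n$. Then $c_i = 1$ for all $i$, the ring $H_i = \C[\vep_i]/(\vep_i^{c_i})$ collapses to $\C$, the loops $\vep_i$ become zero, and $H(C, I_n, \Omega)$ is canonically isomorphic to the path algebra $\C Q^\circ$ of the underlying Dynkin quiver. Under this isomorphism every $H$-module is locally free, rank vectors coincide with dimension vectors, the bilinear form $\bil{-,-}_H$ becomes the Euler form of $\C Q^\circ$, and $v_i = u_i^{1/c_i} = u_i$; the sum defining $X_M$ in Definition~\ref{def-1.1} then reduces term by term to the classical Caldero-Chapoton character attached to $M$. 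Applying \cite[Theorem 3.4]{CC} yields $X_{M(\beta)} = x(\beta)$ for every $\beta \in \Delta^+(C)$ when $D = I_n$, and by the first paragraph the same equality persists for any admissible symmetrizer of $C$.

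For the bijection assertion, I would combine the formula just established with Theorem~\ref{thmb:GLS1b}, which indexes the isomorphism classes of locally free indecomposable rigid $H$-modules bijectively by $\Delta^+(C)$, and with the classification of \cite{FZ2}, which indexes the non-initial cluster variables of $\AA$ by the same set $\Delta^+(C)$; composing these two bijections via the identity $X_{M(\beta)} = x(\beta)$ yields exactly the bijection claimed in Theorem~\ref{Thm1}~(c). There is no genuine obstacle in this argument; the only point worth flagging is that for an arbitrary symmetrizer the module $M(\beta)$ lives in a category different from $\rep(\C Q^\circ)$, so Proposition~\ref{prop-sym} is essential in order to transport the classical Caldero-Chapoton identity from the $D = I_n$ case to the general symmetric case.
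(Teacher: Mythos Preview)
Your proof is correct and follows essentially the same approach as the paper: reduce to the case $D = I_n$ via Proposition~\ref{prop-sym}, where the classical Caldero-Chapoton theorem \cite[Theorem~3.4]{CC} applies directly. Your version is more detailed (in particular you spell out why $H(C,I_n,\Omega) \cong \C Q^\circ$ and address the bijection claim explicitly), but the core argument is identical.
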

\begin{proof}
If $C$ is symmetric and $D = I_n$, Theorem~\ref{Thm1}~(c) holds by \cite{CC}. 
Hence, by Proposition~\ref{prop-sym}, it also holds for an arbitrary 
symmetrizer $D = kI_n\ (k \ge 1)$. 
\end{proof}


\section{Filtrations of preprojective modules}
\label{sec:filtrations}


The results in this section can be found in Omlor's Master Thesis
\cite{O}.
Using the theory developed in \cite{GLS1},
these are straightforward generalizations of similar results by 
Dlab and Ringel \cite{DR}, who study the representation theory of 
species of Dynkin type.
For convenience, we include proofs.

As before, let $H = H(C,D,\Omega)$ with $C$ of Dynkin type.
We label the positive roots
$\beta_1,\ldots,\beta_r$ in $\Delta^+(C)$ such that
$\Hom_H(M(\beta_i),M(\beta_j)) = 0$ for all $i < j$.
(This is always possible by \cite[Corollary~5.8]{GLS3}.)

\begin{Lem}\label{lem:filt1}
We have
$\Ext_H^1(M(\beta_j),M(\beta_i)) = 0$ for all $j \ge i$.
\end{Lem}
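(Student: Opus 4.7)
The case $j = i$ follows immediately from Theorem~\ref{thmb:GLS1b}: the module $M(\beta_i)$ is locally free and rigid, hence $\Ext_H^1(M(\beta_i), M(\beta_i)) = 0$. So I focus on the case $j > i$.

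My plan is to transfer the question to the hereditary $F$-algebra $T$ of Dynkin type attached to the modulated graph of $(C,D,\Omega)$, where classical Auslander-Reiten theory is directly available. By Proposition~\ref{GLS3-Prop5.5}, the hypothesis $\Hom_H(M(\beta_i), M(\beta_j)) = 0$ for $i < j$ becomes $\Hom_T(X(\beta_i), X(\beta_j)) = 0$ for $i < j$, and the target statement becomes $\Ext_T^1(X(\beta_j), X(\beta_i)) = 0$ for $j > i$. Since $T$ is a hereditary Artin algebra, the Auslander-Reiten formula gives
$$
\Ext_T^1(X(\beta_j), X(\beta_i)) \;\cong\; D\Hom_T(X(\beta_i), \tau_T X(\beta_j)),
$$
where $D$ denotes $F$-duality. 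If $X(\beta_j)$ is projective the right-hand side vanishes. Otherwise $\tau_T X(\beta_j) = X(\beta_m)$ for a unique $m \in \{1,\ldots,r\}$, and it then suffices to show $m > i$, since the ordering hypothesis immediately yields $\Hom_T(X(\beta_i), X(\beta_m)) = 0$.

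To establish $m > i$, I consider the Auslander-Reiten sequence
$$
0 \to X(\beta_m) \to E \to X(\beta_j) \to 0
$$
in $T$-mod and decompose $E = \bigoplus_\ell X(\beta_{k_\ell})$ into indecomposables. For each summand index $k_\ell$, the components $X(\beta_m) \to X(\beta_{k_\ell})$ and $X(\beta_{k_\ell}) \to X(\beta_j)$ arising from this sequence are irreducible maps, in particular nonzero; so $\Hom_T(X(\beta_m), X(\beta_{k_\ell})) \neq 0$ and $\Hom_T(X(\beta_{k_\ell}), X(\beta_j)) \neq 0$. The ordering hypothesis therefore forces $m \geq k_\ell$ and $k_\ell \geq j$, giving $m \geq j > i$ as needed.

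The argument essentially transcribes the Dlab-Ringel analysis of hereditary species of Dynkin type \cite{DR} to our setting; the main subtlety is the reduction step itself, which is handled cleanly by Proposition~\ref{GLS3-Prop5.5} and bypasses the fact that $\rep_\lf(H)$ is not in general a module category and does not admit a classical Auslander-Reiten theory directly.
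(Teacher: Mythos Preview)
Your proof is correct and uses the same core idea as the paper: apply the Auslander--Reiten formula to convert the $\Ext^1$-vanishing into a $\Hom$-vanishing, then observe that $\tau$ of $M(\beta_j)$ (resp.\ $X(\beta_j)$) has index at least $j$ in the chosen ordering. The only difference in execution is that the paper applies the AR formula directly in $\rep(H)$, using that $M(\beta_j)$ has projective dimension at most one so that $\Ext_H^1(M(\beta_j),M(\beta_i)) \cong D\Hom_H(M(\beta_i),\tau_H M(\beta_j))$ holds without passing to stable $\Hom$; your detour through the hereditary algebra $T$ via Proposition~\ref{GLS3-Prop5.5} is therefore unnecessary, though certainly valid. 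On the other hand, the paper simply asserts that $\tau_H M(\beta_j)\cong M(\beta_s)$ with $s>j$, whereas your argument via the middle term of the AR sequence (nonzero irreducible maps forcing $m\ge k_\ell\ge j$) gives an explicit justification of this step.
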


\begin{proof}
We use the Auslander-Reiten formula
\[
\Ext_H^1(M(\beta_j),M(\beta_i)) \cong D\Hom_H(M(\beta_i),\tau_H(M(\beta_j))).
\]
(Here we used that the modules $M(\beta_j)$ have projective dimension
at most one.) 
We have $\tau_H(M(\beta_j)) \cong M(\beta_s)$ for some $s > j$. 
If $\Hom_A(M(\beta_i),\tau_H(M(\beta_j))) \not= 0$ for some $j \ge i$, then 
$i \ge s > j$, a contradiction.
\end{proof}

\begin{Lem}\label{lem:filt2}
Let $M \in \rep(H)$ such that there is a filtration
\[
0 = M_0 \subset M_1 \subset \cdots \subset M_t = M
\]
such that for each $1 \le i \le t$ we have
$M_i/M_{i-1} \cong M(\beta_j)$ for some $1 \le j \le r$.
Let $a_j$ be the number of indices $i$ such that 
$M_i/M_{i-1} \cong M(\beta_j)$.
Then there is a filtration
\[
0 = N_r \subseteq N_{r-1} \subseteq \cdots \subseteq N_1 \subseteq N_0 = M
\]
such that for each $1 \le j \le r$ we have 
$N_{j-1}/N_j \cong M(\beta_j)^{a_j}$.
\end{Lem}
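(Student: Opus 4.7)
The plan is to reorganize the given filtration by an iterated adjacent-swap procedure, powered by Lemma~\ref{lem:filt1}, and then to collapse runs of equal factors using $\Ext_H^1(M(\beta_j),M(\beta_j))=0$. The desired final form has $M(\beta_j)^{a_j}$ higher in the filtration when $j$ is smaller; equivalently, after a length-$(\sum_j a_j)$ refinement, the indices of the composition factors must be weakly decreasing from bottom to top.

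First I verify the key swap step. Given a two-step piece $M_{i-1}\subset M_i\subset M_{i+1}$ of the filtration with $M_i/M_{i-1}\cong M(\beta_l)$ and $M_{i+1}/M_i\cong M(\beta_k)$ where $l<k$ (a ``bad'' adjacent pair), I consider the induced short exact sequence
\[
0\to M(\beta_l)\to M_{i+1}/M_{i-1}\to M(\beta_k)\to 0.
\]
Since $k\ge l$, Lemma~\ref{lem:filt1} gives $\Ext_H^1(M(\beta_k),M(\beta_l))=0$, so this sequence splits. Lifting the $M(\beta_k)$-summand through the projection $M_{i+1}\twoheadrightarrow M_{i+1}/M_{i-1}$ yields a submodule $M_i'$ with $M_{i-1}\subset M_i'\subset M_{i+1}$, $M_i'/M_{i-1}\cong M(\beta_k)$ and $M_{i+1}/M_i'\cong M(\beta_l)$. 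Replacing $M_i$ by $M_i'$ exchanges the positions of $M(\beta_l)$ and $M(\beta_k)$ without changing the multiset of composition factors.

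Next I iterate. A short bookkeeping argument shows that each such swap strictly decreases the number of inversions in the sequence of factor indices: contributions from pairs that do not involve the two swapped positions cancel in pairs, and the swapped pair itself changes from an inversion into a non-inversion. After finitely many steps I obtain a refined filtration whose factor indices are weakly decreasing from bottom to top. A run of $a_j$ consecutive copies of $M(\beta_j)$ then assembles, by induction and the diagonal case $\Ext_H^1(M(\beta_j),M(\beta_j))=0$ of Lemma~\ref{lem:filt1}, into a subquotient isomorphic to $M(\beta_j)^{a_j}$. Declaring $N_j$ to be the bottom submodule of the $j$-th such block produces the required chain $0=N_r\subseteq N_{r-1}\subseteq\cdots\subseteq N_0=M$.

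The only substantive ingredient is the swap lemma, and it reduces directly to Lemma~\ref{lem:filt1}; everything else is combinatorial bookkeeping. I therefore do not anticipate a genuine obstacle, beyond carefully aligning ``top'' versus ``bottom'' of the filtration with the convention $N_{j-1}/N_j\cong M(\beta_j)^{a_j}$, so that the swap is performed in the correct direction relative to the ordering chosen on $\Delta^+(C)$.
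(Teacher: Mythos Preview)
Your proposal is correct and follows essentially the same approach as the paper, which simply says ``This follows by induction and by our observation that $\Ext_H^1(M(\beta_j),M(\beta_i)) = 0$ for all $j \ge i$.'' You have spelled out the adjacent-swap induction and the collapse of equal runs that the paper leaves implicit, and your orientation of the swap (moving larger indices downward so that factor indices become weakly decreasing from bottom to top) matches the convention $N_{j-1}/N_j\cong M(\beta_j)^{a_j}$ required by the statement.
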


\begin{proof}
This follows by induction and by our observation that 
$\Ext_H^1(M(\beta_j),M(\beta_i)) = 0$ for all $j \ge i$.
\end{proof}

\begin{Lem}\label{lem:filt3}
For $\gamma \in \Delta^+(C)$, let
\[
0 = N_r \subseteq N_{r-1} \subseteq\cdots\subseteq N_1 \subseteq N_0 = M(\gamma)
\]
be a filtration such that for each $1 \le j \le r$ we have 
$N_{j-1}/N_j \cong M(\beta_j)^{a_j}$ for some $a_j \ge 0$.
Suppose that at least two multiplicities $a_j$ are non-zero.
Then there is no filtration of the form
\[
0 = M_0 \subseteq M_1 \subseteq\cdots\subseteq M_{r-1} \subseteq M_r = M(\gamma)
\]
such that for each $1 \le j \le r$ the module 
$M_j/M_{j-1}$ is locally free of rank $a_j\beta_j$.
\end{Lem}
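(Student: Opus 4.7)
The plan is to argue by contradiction, showing that if both filtrations exist then the short exact sequence coming from the top of the descending filtration must split, contradicting the indecomposability of $M(\gamma)$.

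Set $j_0 := \min\{j : a_j \neq 0\}$. Since at least two of the $a_j$'s are nonzero, there is some $k > j_0$ with $a_k \neq 0$. The descending filtration then gives $N_0 = N_1 = \cdots = N_{j_0-1} = M(\gamma)$ and a short exact sequence
\[
0 \longrightarrow N_{j_0} \longrightarrow M(\gamma) \stackrel{\pi}{\longrightarrow} M(\beta_{j_0})^{a_{j_0}} \longrightarrow 0,
\]
in which $N_{j_0}$ is nonzero (its rank vector is $\sum_{j > j_0} a_j\beta_j \neq 0$) and admits a filtration whose factors are of the form $M(\beta_j)^{a_j}$ with $j > j_0$. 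Similarly, from the assumed ascending filtration we have $M_{j_0-1} = 0$, so that $M_{j_0}$ embeds into $M(\gamma)$ as a locally free submodule of rank $a_{j_0}\beta_{j_0}$.

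The main step is to show that $\pi$ admits a section. Applying $\Hom_H(M(\beta_{j_0}), -)$ to the displayed sequence and using the ordering $\Hom_H(M(\beta_{j_0}), M(\beta_j)) = 0$ for all $j > j_0$, a short induction on the filtration of $N_{j_0}$ yields $\Hom_H(M(\beta_{j_0}), N_{j_0}) = 0$. Hence $\pi$ induces an injection
\[
\pi_* \df \Hom_H(M(\beta_{j_0}), M(\gamma)) \hookrightarrow \Hom_H(M(\beta_{j_0}), M(\beta_{j_0})^{a_{j_0}}),
\]
so that $\dim\Hom_H(M(\beta_{j_0}), M(\gamma)) \le a_{j_0}\dim\End_H(M(\beta_{j_0}))$. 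Conversely, the inclusion $M_{j_0}\hookrightarrow M(\gamma)$ gives $\dim\Hom_H(M(\beta_{j_0}), M(\gamma)) \ge \dim\Hom_H(M(\beta_{j_0}), M_{j_0})$, while the bilinear form $\bil{-,-}_H$ together with the rigidity of $M(\beta_{j_0})$ yields
\[
\dim\Hom_H(M(\beta_{j_0}), M_{j_0}) \ge \bil{\beta_{j_0}, a_{j_0}\beta_{j_0}}_H = a_{j_0}\dim\End_H(M(\beta_{j_0})).
\]
These bounds collapse, so $\pi_*$ is an isomorphism. Lifting the $a_{j_0}$ canonical inclusions $M(\beta_{j_0})\hookrightarrow M(\beta_{j_0})^{a_{j_0}}$ through $\pi_*$ and assembling them produces a section of $\pi$. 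Therefore the displayed sequence splits, giving $M(\gamma) \cong M(\beta_{j_0})^{a_{j_0}}\oplus N_{j_0}$ with both summands nonzero, contradicting the indecomposability of $M(\gamma)$.

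The only really delicate point is the $\Hom$-vanishing $\Hom_H(M(\beta_{j_0}), N_{j_0}) = 0$; this is precisely where the carefully chosen ordering of the positive roots, recalled at the start of the section, is used. Everything else is standard homological bookkeeping, based on the bilinear form of $\rep_\lf(H)$ and on the rigidity of $M(\beta_{j_0})$.
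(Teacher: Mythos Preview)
Your proof is correct, but it takes a genuinely different route from the paper's. The paper works at the \emph{maximal} index $s$ with $a_s\neq 0$: the descending filtration then gives a nonzero map $M(\beta_s)\to M(\gamma)$, while the top of the ascending filtration gives a surjection $M(\gamma)\twoheadrightarrow X$ with $X$ locally free of rank $a_s\beta_s$; since $\bil{a_s\beta_s,\beta_s}_H>0$ there is a nonzero $X\to M(\beta_s)$, and composing yields a nonzero $M(\gamma)\to M(\beta_s)$. Together with the nonzero map $M(\beta_s)\to M(\gamma)$ this contradicts the directing property of the preprojective modules (\cite[Corollary~5.8]{GLS3}).

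You instead work at the \emph{minimal} index $j_0$ and produce a splitting of the top quotient $\pi\df M(\gamma)\twoheadrightarrow M(\beta_{j_0})^{a_{j_0}}$ by a dimension count on $\Hom_H(M(\beta_{j_0}),-)$: the ordering kills $\Hom$ into $N_{j_0}$, while the bilinear form and rigidity force $\dim\Hom_H(M(\beta_{j_0}),M(\gamma))$ to hit the upper bound, so $\pi_*$ is an isomorphism and a section can be assembled. The contradiction is then with indecomposability rather than with directedness. Your argument is longer but entirely self-contained: it uses only the ordering hypothesis, rigidity of $M(\beta_{j_0})$, and the homological bilinear form, and avoids the external citation to \cite[Corollary~5.8]{GLS3}. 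The paper's argument is shorter and perhaps more conceptual, but leans on that directing result.
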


\begin{proof}
Let $s$ be maximal with $a_s \not= 0$.
Thus the first filtration yields a non-zero homomorphism
$M(\beta_s) \to M(\gamma)$.
Suppose for a contradiction that the filtration
\[
0 = M_0 \subseteq M_1 \subseteq\cdots\subseteq M_{r-1} \subseteq M_r = M(\gamma)
\]
exits. Then, we have an epimorphism
$f\df M(\gamma) \to X$ with $X := M_s/M_{s-1}$.
Since we are in the Dynkin case, we have
\[
\bil{\beta_s,\beta_s}_H > 0.
\]
This implies that
\begin{align*}
\bil{\rk(X),\rk(M(\beta_s))}_H &= a_s\bil{\beta_s,\beta_s}_H 
\\
&= \dim \Hom_H(X,M(\beta_s)) - \dim \Ext_H^1(X,M(\beta_s)) > 0.
\end{align*}
Thus 
there is a non-zero homomorphism 
$g\df X \to M(\beta_s)$.
It follows that $g\circ f\df M(\gamma) \to M(\beta_s)$ is
non-zero,
a contradiction to \cite[Corollary~5.8]{GLS3}. 
\end{proof}

Note that in the situation of Lemma~\ref{lem:filt3}, if exactly one $a_j$
is non-zero, then it has to be equal to one, since a non-trivial multiple of a 
root is never a root.

\begin{Lem}\label{lem:filt4}
Let $k$ be a source in $Q(C,\Omega)^\circ$, and let
$\alpha + \alpha_k = \gamma$ with
$\alpha,\gamma \in \Delta^+(C)$.
Then there exists a short exact sequence
\[
0 \to M(\alpha) \to M(\gamma) \to M(\alpha_k) \to 0.
\]
\end{Lem}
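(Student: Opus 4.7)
Because $k$ is a source in $Q^\circ$, the only arrow ending at $k$ is the loop $\vep_k$. Consequently, any $H$-module map $\phi\colon M \to E_k$ is the same data as an $H_k$-linear map $\phi_k\colon M_k \to H_k$ (extended by zero on $M_i$ for $i\ne k$); there is no further compatibility to check. Thus $\Hom_H(M,E_k)\cong\Hom_{H_k}(M_k,H_k)$. The plan is to use this identification to construct the SES explicitly, and then identify its kernel with $M(\alpha)$.

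Writing $\alpha=\sum_i a_i\alpha_i$, the $H_k$-module $M(\gamma)_k$ is free of positive rank $a_k+1$ over the Frobenius algebra $H_k=\CC[\vep_k]/(\vep_k^{c_k})$. I would choose an $H_k$-linear surjection $\phi_k\colon M(\gamma)_k\twoheadrightarrow H_k$; by self-injectivity of $H_k$, $\phi_k$ splits, so its kernel is free of rank $a_k$. The associated $H$-module surjection $\phi\colon M(\gamma)\twoheadrightarrow E_k$ then has kernel $U:=\ker\phi$ with $U_i=M(\gamma)_i$ for $i\ne k$ and $U_k=\ker\phi_k$; hence $U$ is locally free with $\rkv(U)=\gamma-\alpha_k=\alpha$.

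The remaining step is to show that, for a suitable $\phi_k$, the module $U$ is isomorphic to $M(\alpha)$. By Theorem~\ref{thmb:GLS1b}, this amounts to showing that $U$ can be chosen rigid. Since rigidity is a Zariski-open condition on the parameter space of $\phi_k$, it suffices to verify that the rigid locus is non-empty, and for this I would invoke Proposition~\ref{GLS3-Prop5.5}: in the corresponding species $T$, the classical Dlab-Ringel theory produces a short exact sequence $0\to X(\alpha)\to X(\gamma)\to S_k\to 0$ in $\rep(T)$ (available because $S_k$ is injective when $k$ is a source, so admits non-split extensions from $X(\alpha)$), and the dimension-matching of Proposition~\ref{GLS3-Prop5.5} transfers the required Ext-vanishing back to $\rep_\lf(H)$.

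The main obstacle is this final identification $U\cong M(\alpha)$: a direct computation in type $B_2$ (e.g., with $\gamma=\alpha_1+2\alpha_2$, $\alpha=\alpha_1+\alpha_2$, and $k=2$) shows that for a \emph{bad} choice of $\phi_k$ the kernel $U$ can have two-dimensional endomorphism ring and fail to be rigid, so a genericity argument is genuinely needed; the species correspondence provides the cleanest way to secure a good choice of $\phi_k$.
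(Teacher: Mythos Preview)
Your first two steps are correct: since $k$ is a source, $\Hom_H(M,E_k)\cong\Hom_{H_k}(M_k,H_k)$, and any $H_k$-linear surjection $M(\gamma)_k\twoheadrightarrow H_k$ gives an $H$-module surjection $M(\gamma)\twoheadrightarrow E_k$ with locally free kernel of rank vector $\alpha$. The genericity observation (rigidity of the kernel is Zariski-open on the parameter space of $\phi_k$) is also fine.

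The gap is in your justification that the rigid locus is non-empty. Proposition~\ref{GLS3-Prop5.5} only compares $\dim\Hom$ and $\dim\Ext^1$ between modules of the form $M(\beta)$ and $X(\beta)$; it says nothing about an arbitrary locally free kernel $U$. Concretely, the Dlab--Ringel short exact sequence $0\to X(\alpha)\to X(\gamma)\to S_k\to 0$ in $\rep(T)$ tells you, via Proposition~\ref{GLS3-Prop5.5}, only that $\Ext^1_H(E_k,M(\alpha))\neq 0$, hence that \emph{some} non-split extension $0\to M(\alpha)\to M'\to E_k\to 0$ exists. It does \emph{not} tell you that $M'\cong M(\gamma)$, nor that some surjection $M(\gamma)\to E_k$ has kernel $M(\alpha)$. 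Your own $B_2$ warning illustrates the problem: even among the non-split extensions in $\Ext^1_H(E_2,E_1)$ (type $B_2$, $\alpha=\alpha_1$), one class has middle term $P_2=M(\alpha_1+\alpha_2)$ and another has a non-rigid middle term with two-dimensional endomorphism ring. So the numerical transfer is not enough; you need a mechanism that identifies which extension (or which $\phi_k$) lands on the rigid module. (Your parenthetical is also off: injectivity of $S_k$ controls $\Ext^1_T(-,S_k)$, not $\Ext^1_T(S_k,-)$, so it does not by itself produce the species short exact sequence.)

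The paper supplies exactly this missing mechanism, but by a completely different route: it invokes the crystal-type operators $\tilde{f}_k^*$ on maximal irreducible components of the varieties $\Pi(\br)$ from \cite{GLS4}. Since $\rep_\lf(H,\gamma)=Z_\gamma$ and $\rep_\lf(H,\alpha)=Z_\alpha$ are irreducible with dense orbits $M(\gamma)$ and $M(\alpha)$, and since $k$ being a source forces $Z_\gamma\in\Irr(\Pi(\gamma)^{k,\gamma_k})^{\max}$, one gets $\tilde{f}_k^*(Z_\gamma)=Z_\alpha$; unpacking the definition of $\tilde{f}_k^*$ yields precisely that the generic kernel of a surjection $M(\gamma)\to E_k$ lies in the orbit of $M(\alpha)$. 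If you want to avoid \cite{GLS4}, the cleanest fix is not Proposition~\ref{GLS3-Prop5.5} but rather a direct reflection-functor argument for $H$ as in \cite[Section~9]{GLS1} (this is how the paper handles the more general Lemma~\ref{lem:filt5}).
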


\begin{proof}
For any rank vector $\br$, $Z_\br := \rep_\vp(H,\br)$ is a
maximal irreducible component of the generalized nilpotent
variety $\Pi(\br)$ introduced and studied in \cite[Section~4]{GLS4}.

The modules $M(\gamma)$ and $M(\alpha)$ are 
both rigid, and their orbit closures are $Z_\gamma$ and $Z_\alpha$, respectively.

We now use the notation from \cite[Section~5]{GLS4}.
Since $k$ is a source in $Q(C,\Omega)^\circ$ we have
$Z_\gamma \in \Irr(\Pi(\br)^{k,p})^\mx$ with $p = \gamma_k > 0$.
We get
\[
\tilde{f}_k^*(Z_\gamma) = Z_\alpha.
\]
(Like $Z_\gamma$, the irreducible component $\tilde{f}_k^*(Z_\gamma)$
just consists of $H$-modules.
Thus it has to be equal to $Z_\alpha$.)
It follows that there is a short exact sequence
\[
0 \to M(\alpha) \to M(\gamma) \to M(\alpha_k) \to 0.
\]
This finishes the proof.
\end{proof}

There is an obvious dual of Lemma~\ref{lem:filt4} where one assumes
that $k$ is a sink in $Q(C,\Omega)^\circ$.

\begin{Lem}\label{lem:filt5}
Let
$\alpha + \beta = \gamma$ with
$\alpha,\beta,\gamma \in \Delta^+(C)$.
Then there exists a short exact sequence
\[
0 \to M(\alpha) \to M(\gamma) \to M(\beta) \to 0
\]
or a short exact sequence
\[
0 \to M(\beta) \to M(\gamma) \to M(\alpha) \to 0.
\]
\end{Lem}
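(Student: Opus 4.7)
My plan is to reduce to a single possible form of the short exact sequence, establish non-vanishing of the relevant $\Ext^1$ by an open-orbit argument, and identify the middle term with $M(\gamma)$ via upper semicontinuity of endomorphism dimensions.

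First, after possibly interchanging $\alpha$ and $\beta$, label them so that $M(\alpha) = M(\beta_i)$ and $M(\beta) = M(\beta_j)$ with $i < j$; the case $i=j$ is excluded since $2\alpha$ is never a positive root in a Dynkin root system. By the ordering convention and Lemma~\ref{lem:filt1}, this gives $\Hom_H(M(\alpha), M(\beta)) = 0$ and $\Ext_H^1(M(\beta), M(\alpha)) = 0$, so only a short exact sequence of the form $0 \to M(\alpha) \to M(\gamma) \to M(\beta) \to 0$ is compatible with these vanishings. To show $\Ext_H^1(M(\alpha), M(\beta)) \neq 0$, suppose otherwise. Then all four summands of $\Ext_H^1(M(\alpha) \oplus M(\beta), M(\alpha) \oplus M(\beta))$ vanish, so $M(\alpha) \oplus M(\beta)$ is rigid and its orbit is open in the irreducible variety $Z_\gamma = \rep_\lf(H, \gamma)$ (irreducibility being recalled in the proof of Lemma~\ref{lem:filt4}). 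But $M(\gamma)$ is also rigid with open orbit in $Z_\gamma$, and since an irreducible variety admits at most one open orbit, this would force $M(\gamma) \cong M(\alpha) \oplus M(\beta)$, contradicting the indecomposability of $M(\gamma)$.

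It remains to show that some non-split extension $0 \to M(\beta) \to E \to M(\alpha) \to 0$ has $E \cong M(\gamma)$, noting that $E$ is automatically locally free of rank vector $\gamma$ by Theorem~\ref{thma:GLS1}. My approach uses upper semicontinuity: $\dim \End_H(E_\xi)$ is upper semicontinuous as $\xi$ varies in $\Ext_H^1(M(\alpha), M(\beta))$, and its minimum is attained on an open dense subset. If the minimum equals $1$, then the generic $E_\xi$ is indecomposable and rigid, hence isomorphic to $M(\gamma)$. The main obstacle is verifying that the minimum is indeed $1$: this should follow from a dimension count analogous to the one appearing in the proof of Lemma~\ref{lem:filt4}, showing that the classifying morphism from the extension family into $Z_\gamma$ has image of dimension $\dim Z_\gamma$, and thus necessarily meets the open orbit of $M(\gamma)$.
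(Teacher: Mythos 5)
The decisive step of your argument --- showing that some non-split extension of $M(\alpha)$ by $M(\beta)$ has middle term $M(\gamma)$ --- is exactly where the proposal stops being a proof. Two problems occur there. First, your numerical criterion is wrong: for a locally free $E$ of rank vector $\gamma$ one has $\dim\End_H(E)-\dim\Ext^1_H(E,E)=\langle\gamma,\gamma\rangle_H$, so $\dim\End_H(E)\ge\langle\gamma,\gamma\rangle_H$, and this number is frequently $\ge 2$ (e.g.\ in type $B_2$ with minimal symmetrizer, $\gamma=\alpha_1+2\alpha_2$ gives $\langle\gamma,\gamma\rangle_H=2$ and $\dim\End_H(I_1)=2$). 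Hence the minimum of $\dim\End_H(E_\xi)$ over the extension family is never $1$ in such cases, and the test "minimum $=1$" can never certify $E_\xi\cong M(\gamma)$; the correct threshold would be $\langle\gamma,\gamma\rangle_H$, i.e.\ you would have to prove that the generic extension is \emph{rigid} --- which is essentially the statement you are trying to prove. Second, the justification you offer for this ("a dimension count analogous to the one appearing in the proof of Lemma~\ref{lem:filt4}") refers to an argument that does not exist: the proof of Lemma~\ref{lem:filt4} contains no dimension count. It works only because one of the two roots is a simple root $\alpha_k$ at a source, and it invokes the crystal-type operators $\tilde{f}_k^*$ on maximal irreducible components of the generalized nilpotent varieties of \cite{GLS4}; nothing in that proof tells you that the locus of middle terms of extensions of $M(\alpha)$ by $M(\beta)$ is dense in $Z_\gamma$ for two arbitrary non-simple roots. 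Establishing that density (or the rigidity of a generic extension) over the non-hereditary algebra $H$ is the real content of the lemma, and it is missing. The paper circumvents it differently: it applies the reflection functors $F_k^{\pm}$ of \cite[Section~9]{GLS1} to reduce, as in Dlab--Ringel, to a situation where one of the two modules becomes an injective $E_k$ with $k$ a source, applies Lemma~\ref{lem:filt4} there, and then transports the short exact sequence back using exactness of the reflection functors on sequences of $\tau$-locally free modules without $E_k$-summands.

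The first half of your argument is essentially sound, though note a directional slip: since $\Ext^1_H(M(\beta),M(\alpha))=0$ (Lemma~\ref{lem:filt1}) rules out non-split sequences $0\to M(\alpha)\to E\to M(\beta)\to 0$, the only possible form is $0\to M(\beta)\to M(\gamma)\to M(\alpha)\to 0$, not the one you state in that sentence (your later computations do use the correct $\Ext^1_H(M(\alpha),M(\beta))$, so this is only an inconsistency of wording). The open-orbit argument for $\Ext^1_H(M(\alpha),M(\beta))\neq 0$ is fine, granting the facts the paper itself uses in Lemma~\ref{lem:filt4} (irreducibility of $Z_\gamma$ and density of the orbit of the rigid module $M(\gamma)$), but this non-vanishing is the easy part; it only tells you a non-split extension exists, not that its middle term is $M(\gamma)$.
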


\begin{proof}
In \cite[Section~9]{GLS1} we developed reflection functors for
generalized preprojective algebras.
These restrict to reflection functors
\[
F_k^+\df \rep(H(C,D,\Omega)) \to \rep(H(C,D,s_k(\Omega))
\]
for $k$ a sink in $Q(C,\Omega)^\circ$, and 
\[
F_k^-\df \rep(H(C,D,\Omega)) \to \rep(H(C,D,s_k(\Omega))
\]
for $k$ a source in $Q(C,\Omega)^\circ$.

Now we can proceed as in \cite{DR}.
Namely, there is a sequence $(i_1,\ldots,i_t)$ such that
$M(\alpha') := F_{i_1}^- \cdots F_{i_t}^-(M(\alpha)) \not= 0$ and
$M(\beta') := F_{i_1}^- \cdots F_{i_t}^-(M(\beta)) \not= 0$, and 
one if these modules is locally free injective of rank $\alpha_k$
for some $k$.
Thus,
without loss of generality we can assume that $M(\beta') \cong E_k$ with
$E_k$ injective. 
Set $M(\gamma') := F_{i_1}^- \cdots F_{i_t}^-(M(\gamma))$. 
(Note that the modules $M(\alpha')$, $M(\beta')$ and $M(\gamma')$ are modules
over $H(C,D,s_{i_1} \cdots s_{i_t}(\Omega))$.)
Applying Lemma~\ref{lem:filt4}
we get a short exact sequence
\[
0 \to M(\alpha') \to M(\gamma') \to E_k \to 0.
\]
Since the functors $F_i^+$ are exact on short exact sequences
of $\tau$-locally free modules which do not have non-zero projective direct 
summands isomorphic to $E_i$, we can apply
$F_{i_t}^+ \cdots F_{i_1}^+$ to the sequence above and get
a short exact sequence
\[
0 \to M(\alpha) \to M(\gamma) \to M(\beta) \to 0.
\]
Note that in the entire proof we strongly used 
\cite[Proposition~9.6]{GLS1} and 
\cite[Theorem~11.10]{GLS1}, 
compare also the discussion in \cite[Section~5]{GLS3}.
\end{proof}

A proof of the following lemma can be found in 
\cite[Ch. VI, \S 1, Proposition~19]{B}.

\begin{Lem}\label{lem:filt6}
Let $(\gamma,\gamma_1,\ldots,\gamma_t)$ be a sequence of
positive roots in $\Delta^+(C)$ such that
\[
\gamma = \gamma_1 + \cdots + \gamma_t.
\]
Then there is a permutation $\pi$ of $\{ 1,\ldots,t \}$ 
such that 
\[
\sum_{i=1}^s \gamma_{\pi(i)}
\]
is a positive root for each $1 \le s \le t$.
\end{Lem}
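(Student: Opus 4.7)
The plan is to proceed by induction on $t$, with the immediate base case $t=1$. For the inductive step with $t \ge 2$, it suffices to exhibit an index $i$ such that $\gamma - \gamma_i \in \Delta^+(C)$: then applying the inductive hypothesis to the decomposition $\gamma - \gamma_i = \sum_{j \ne i} \gamma_j$ yields a permutation $\pi'$ of $\{1,\ldots,t\}\setminus\{i\}$ whose partial sums are positive roots, and appending $i$ at the end produces the desired $\pi$.

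To pick such an $i$, I would work with the symmetric positive definite bilinear form $(\cdot,\cdot)$ on $\Z^n$ determined by $DC$, which is available thanks to axiom (C3). From $(\gamma,\gamma) = \sum_{j=1}^{t}(\gamma,\gamma_j) > 0$ I can extract at least one index $i$ with $(\gamma,\gamma_i) > 0$. Moreover $\gamma \ne \gamma_i$, since otherwise $\sum_{j \ne i}\gamma_j = 0$, contradicting $t \ge 2$ together with $\gamma_j \in \Delta^+(C)$.

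The main step, and the one most in need of justification, is to deduce from $(\gamma,\gamma_i) > 0$ and $\gamma \ne \gamma_i$ that $\gamma - \gamma_i$ is a root. This is the classical fact about finite crystallographic root systems, established via the $\gamma_i$-string through $\gamma$: the integer $p - q = 2(\gamma,\gamma_i)/(\gamma_i,\gamma_i)$ is strictly positive, which forces $p \ge 1$, so $\gamma - \gamma_i$ lies in the string and is therefore a root. Since $\gamma - \gamma_i = \sum_{j \ne i}\gamma_j$ is a nonzero nonnegative integral combination of positive roots, it must in fact be a positive root, closing the induction.
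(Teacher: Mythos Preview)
Your argument is correct and is precisely the standard proof of this fact; it is essentially the argument given in \cite[Ch.~VI, \S1, Proposition~19]{B}, which is what the paper cites in lieu of a proof.
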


\begin{Thm}\label{thm:filt7}
Let $(\gamma,\gamma_1,\ldots,\gamma_t)$ be a sequence of
positive roots in $\Delta^+(C)$ such that
\[
\gamma = \gamma_1 + \cdots + \gamma_t.
\]
Then there is a permutation $\pi$ of $\{ 1,\ldots,t \}$ and a 
filtration
\[
0 = M_0 \subset M_1 \subset \cdots \subset M_t = M(\gamma)
\]
such that $M_i/M_{i-1} \cong M(\gamma_{\pi(i)})$ for all $1 \le i \le t$.
\end{Thm}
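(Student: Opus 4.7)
The plan is to prove Theorem~\ref{thm:filt7} by induction on $t$, after first using Lemma~\ref{lem:filt6} to reduce to the case where all partial sums of the sequence are positive roots. Since the theorem only asks for \emph{some} permutation of the summands, we may from the outset apply Lemma~\ref{lem:filt6} and relabel so that
\[
 \alpha^{(s)} := \gamma_1 + \gamma_2 + \cdots + \gamma_s \in \Delta^+(C)
 \qquad \text{for every } 1 \le s \le t.
\]
Composing the two permutations at the end will still yield a permutation of $\{1,\ldots,t\}$, so this reduction is harmless.

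The base case $t = 1$ is trivial, since the only filtration is $0 \subset M(\gamma)$. For $t \ge 2$, set $\alpha := \alpha^{(t-1)} = \gamma_1 + \cdots + \gamma_{t-1}$ and $\beta := \gamma_t$, so that $\alpha, \beta, \gamma \in \Delta^+(C)$ and $\alpha + \beta = \gamma$. Apply Lemma~\ref{lem:filt5} to obtain one of the two short exact sequences
\[
 0 \to M(\alpha) \to M(\gamma) \to M(\beta) \to 0 \qquad \text{or} \qquad 0 \to M(\beta) \to M(\gamma) \to M(\alpha) \to 0.
\]
By the induction hypothesis applied to the shorter sequence $(\alpha,\gamma_1,\ldots,\gamma_{t-1})$, there exists a permutation $\sigma$ of $\{1,\ldots,t-1\}$ and a filtration $0 = N_0 \subset N_1 \subset \cdots \subset N_{t-1} = M(\alpha)$ with $N_i/N_{i-1} \cong M(\gamma_{\sigma(i)})$. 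In the first case, pull this filtration back through the inclusion $M(\alpha) \hookrightarrow M(\gamma)$ and extend by $M(\gamma)$ on top: then $M_i := N_i$ for $i \le t-1$ and $M_t := M(\gamma)$, with the quotient $M_t/M_{t-1} \cong M(\beta) = M(\gamma_t)$. In the second case, set $M_1 := M(\beta)$ (the submodule) and let $M_{i+1}$ be the preimage of $N_i$ under $M(\gamma) \twoheadrightarrow M(\alpha)$ for $1 \le i \le t-1$; then $M_{i+1}/M_i \cong N_i/N_{i-1} \cong M(\gamma_{\sigma(i)})$ and $M_1/M_0 \cong M(\gamma_t)$. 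In either case we obtain a filtration of the required form, with the corresponding permutation $\pi$ of $\{1,\ldots,t\}$ being $\sigma$ shifted suitably and with $t$ inserted either at position $t$ or at position $1$.

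The one step that requires justification beyond invoking earlier results is the reduction to the case of partial sums being roots: without it, the inductive step could not split off a single summand $\gamma_t$ so that the complementary sum $\alpha$ is again a root, which is precisely what is needed to feed Lemma~\ref{lem:filt5}. With Lemma~\ref{lem:filt6} in hand, the rest is a clean two-case induction using the extension/sub exactness of the two sequences produced by Lemma~\ref{lem:filt5}; the only subtlety is bookkeeping of the permutation, which in both cases is a simple insertion of the index $t$ at one end of $\sigma$.
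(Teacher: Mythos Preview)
Your proof is correct and follows essentially the same approach as the paper, which simply states that the result is a straightforward induction using Lemmas~\ref{lem:filt5} and~\ref{lem:filt6}. You have spelled out the details of this induction carefully, including the handling of both cases arising from Lemma~\ref{lem:filt5} and the bookkeeping of the permutation.
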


\begin{proof}
The proof is a straightforward induction using 
Lemmas~\ref{lem:filt5} and \ref{lem:filt6}. 
\end{proof}


\section{PBW bases for convolution algebras}
\label{sec:PBW}


We keep our numbering convention for the positive roots
$\beta_1,\ldots,\beta_r$.
Recall from Theorem~\ref{thmc:GLS3} the constructible functions 
$\theta_{\beta_i}$, and for $a_i \ge 0$ define
\[
\theta_{\beta_i}^{(a_i)} := \frac{1}{a_i!} (\theta_{\beta_i})^{*a_i}.
\]
For $\ba = (a_1,\ldots,a_r) \in \N^r$ let
\[
\theta_\ba := \theta_{\beta_1}^{(a_1)} * \cdots * \theta_{\beta_r}^{(a_r)}
\text{\;\;\; and \;\;\;}
M_\ba := M(\beta_1)^{a_1} \oplus \cdots \oplus M(\beta_r)^{a_r}.
\]
Then 
\[
\cP := \{ \theta_\ba \mid \ba \in \N^r \}
\]
is a PBW basis of $\cM(H)$.
Set $\delta_\ba := \delta_{M_\ba}$. We can now prove the next theorem, which is 
a reformulation of Theorem~\ref{Thm2}.

\begin{Thm}\label{thm:PBW1}
For $\ba,\bb \in \N^r$ we have
\[
\delta_\ba(\theta_\bb) = \delta_{\ba,\bb}.
\]
\end{Thm}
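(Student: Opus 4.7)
The plan is to prove $\theta_\bb(M_\ba) = \delta_{\ba,\bb}$ by induction on $\abs{\ba} := \sum_j a_j$, leveraging the Hopf algebra structure of $\cM(H)$ together with the multiplicativity \eqref{eq-2.1} to reduce matters to the single nontrivial base case $\ba = \be_k$, where Lemma~\ref{lem:filt3} enters. Throughout I use that $\cM(H)^*_{\rm gr}$ inherits from the coproduct of $\cM(H)$ an algebra structure characterized by $(\varphi\cdot\psi)(x) = (\varphi\otimes\psi)(\Delta x)$, and that \eqref{eq-2.1} asserts $\delta_{M\oplus N} = \delta_M\cdot\delta_N$.

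\textbf{Base case $\abs{\ba}\le 1$.} The case $\ba = 0$ is trivial since $\theta_\bb(0) = \delta_{\bb,0}$ by grading. For $\ba = \be_k$, so $M_\ba = M(\beta_k)$, I claim $\theta_\bb(M(\beta_k)) = \delta_{\bb,\be_k}$ for every $\bb$. When $\sum_j b_j\beta_j\ne\beta_k$ both sides vanish by grading; when $\bb = \be_k$ the value is $\theta_{\beta_k}(M(\beta_k)) = 1$ by Theorem~\ref{thmc:GLS3}. Otherwise $\bb\ne\be_k$ and $\sum_j b_j\beta_j = \beta_k$, which forces $\bb$ to have at least two nonzero entries (a single nonzero entry $b_s\beta_s = \beta_k$ in a crystallographic root system would force $b_s = 1$ and $\beta_s = \beta_k$). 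Theorem~\ref{thm:filt7} together with Lemma~\ref{lem:filt2} then provides a descending filtration of $M(\beta_k)$ with graded pieces $M(\beta_j)^{b_j}$, so Lemma~\ref{lem:filt3} applies (with $a_j := b_j$) and rules out any ascending filtration $0 = U_0\subset U_1\subset\cdots\subset U_r = M(\beta_k)$ whose quotients $U_j/U_{j-1}$ are locally free of rank $b_j\beta_j$. Since the convolution formula expresses $\theta_\bb(M(\beta_k))$ as a weighted Euler characteristic of exactly that (now empty) filtration variety, the value vanishes.

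\textbf{Inductive step.} Assume $\abs{\ba}\ge 2$ and the claim is known for strictly smaller values of $\abs{\cdot}$. Pick $k$ with $a_k\ge 1$, so $M_\ba \cong M(\beta_k)\oplus M_{\ba-\be_k}$, and hence by \eqref{eq-2.1},
\[
\delta_{M_\ba} \;=\; \delta_{M(\beta_k)}\cdot\delta_{M_{\ba-\be_k}}.
\]
Primitivity of each $\theta_{\beta_j}$ (Theorem~\ref{thmc:GLS3}) gives $\Delta\bigl(\theta_{\beta_j}^{(b_j)}\bigr) = \sum_{c_j=0}^{b_j}\theta_{\beta_j}^{(c_j)}\otimes\theta_{\beta_j}^{(b_j-c_j)}$, and multiplying these out in the prescribed order $j=1,\dots,r$ (using that the product in $\cM(H)\otimes\cM(H)$ is componentwise) yields
\[
\Delta(\theta_\bb) \;=\; \sum_{\bc\le\bb}\theta_\bc\otimes\theta_{\bb-\bc}.
\]
Dualizing the pairing therefore gives
\[
\theta_\bb(M_\ba) \;=\; \sum_{\bc\le\bb}\theta_\bc(M(\beta_k))\,\theta_{\bb-\bc}(M_{\ba-\be_k}).
\]
By the base case only $\bc = \be_k$ can contribute, and it does so only if $b_k\ge 1$. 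If $b_k\ge 1$, we obtain $\theta_\bb(M_\ba) = \theta_{\bb-\be_k}(M_{\ba-\be_k}) = \delta_{\bb-\be_k,\,\ba-\be_k} = \delta_{\bb,\ba}$ by the induction hypothesis. If $b_k = 0$, the sum is empty and $\theta_\bb(M_\ba) = 0$; but $a_k\ge 1$ forces $\bb\ne\ba$, so $\delta_{\bb,\ba} = 0$ as well.

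\textbf{Main obstacle.} The coproduct identity for $\theta_\bb$ and the multiplicativity of the $\delta$-functionals are essentially formal consequences of the Hopf structure of $\cM(H)$. Consequently the substantive content of the proof is concentrated in the base case $\ba = \be_k$, which rests on Lemma~\ref{lem:filt3}, and ultimately on Theorem~\ref{thm:filt7} and the Hom/Ext-vanishing properties encoded in the chosen ordering of $\Delta^+(C)$.
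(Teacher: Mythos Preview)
Your proof is correct and follows essentially the same approach as the paper: the base case $\ba=\be_k$ is handled identically via Theorem~\ref{thm:filt7}, Lemma~\ref{lem:filt2}, and Lemma~\ref{lem:filt3}, and the general case is reduced to it using the Hopf algebra structure (primitivity of the $\theta_{\beta_j}$ together with the multiplicativity~\eqref{eq-2.1} of the $\delta$-functions). The only cosmetic difference is that the paper applies the iterated comultiplication $\Delta_{a_1+\cdots+a_r}$ in one shot, whereas you split off one factor $M(\beta_k)$ at a time and induct on $\abs{\ba}$; these are equivalent ways of unpacking the same Hopf-algebraic identity.
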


\begin{proof}
Note that for degree reasons, we have $\delta_\ba(\theta_\bb) = 0$ unless 
$\sum_i a_i\beta_i = \sum_i b_i\beta_i$.
Let us consider first the case when $\ba = e_k$ is the $k$th standard basis 
vector of $\Z^r$.
In other words, we have $M_\ba = M(\beta_k)$.
By Theorem~\ref{thmc:GLS3},
\[
\delta_{M(\beta_k)}(\theta_{\beta_k}) = \theta_{\beta_k}(M(\beta_k)) = 1.
\]
Next, let $\bb = (b_1,\ldots,b_r) \in \N^r$ with $\bb \not= e_k$.
Then 
\[
\delta_{M(\beta_k)}(\theta_\bb) = 
(\theta_{\beta_1}^{(b_1)} * \cdots * \theta_{\beta_r}^{(b_r)})(M(\beta_k)).
\]
We can assume that 
\[
\beta_k = b_1\beta_1 + \cdots + b_r\beta_r.
\]
Now Theorem~\ref{thm:filt7} combined with Lemma~\ref{lem:filt2}
yields a filtration
\[
0 = N_r \subseteq N_{r-1} \subseteq\cdots\subseteq N_1 \subseteq N_0 = M(\beta_k)
\]
such that for each $1 \le j \le r$ we have 
$N_{j-1}/N_j \cong M(\beta_j)^{b_j}$.
Using Lemma~\ref{lem:filt3} we see that
there is no filtration of the form
\[
0 = M_0 \subseteq M_1 \subseteq\cdots\subseteq M_{r-1} \subseteq M_r = M(\beta_k)
\]
such that for each $1 \le j \le r$ the module 
$M_j/M_{j-1}$ is locally free of rank $b_j\beta_j$.
But this implies that
\begin{equation}\label{eq:6.1}
(\theta_{\beta_1}^{(b_1)} * \cdots * \theta_{\beta_r}^{(b_r)})(M(\beta_k)) = 0.
\end{equation}
Thus $\delta_{M(\beta_k)}$ is a dual PBW generator, and the theorem is proved in 
the case $\ba = e_k$.

The general case now follows from general Hopf algebra considerations.
Indeed, by (\ref{eq-2.1}), for $\ba\in\N^r$ we have  
\[
\delta_\ba = (\delta_{M(\beta_1)})^{a_1} \cdots (\delta_{M(\beta_r)})^{a_r}.
\]
Now, if $\varphi_1,\ldots,\varphi_k \in \cM(H)^*_{\rm gr}$ and $f\in \cM(H)$, 
we have
\[
(\varphi_1\cdots\varphi_k)(f) = (\varphi_1\otimes\cdots \otimes\varphi_k)(\Delta_k(f)),
\]
where $\Delta_k: \cM(H) \to \cM(H)^{\otimes k}$ denotes the iterated 
comultiplication. Hence
\[
 \delta_\ba(\theta_\bb) = (\delta_{M(\beta_1)})^{\otimes a_1}\otimes \cdots \otimes (\delta_{M(\beta_r)})^{\otimes a_r}
 \left(\Delta_{a_1+\cdots+a_r}\left( \theta_{\beta_1}^{(b_1)} * \cdots * \theta_{\beta_r}^{(b_r)}\right)\right).
\]
Since $\theta_{\beta_k}$ is a primitive element of $\cM(H)$,
\[
\Delta_{a_1+\cdots+a_r}( \theta_{\beta_k}) = (\theta_{\beta_k}\otimes 1 \otimes \cdots \otimes 1) 
+ (1\otimes\theta_{\beta_k}\otimes 1 \otimes \cdots \otimes 1)
+ \cdots 
+ (1\otimes \cdots \otimes 1 \otimes \theta_{\beta_k}).
\]
Hence, using (\ref{eq:6.1}) and the fact that $\Delta_{a_1+\cdots+a_r}$ 
is an algebra homomorphism,
we see that $\delta_\ba(\theta_\bb)$ can be nonzero only if $\bb = \ba$.
Moreover in that case, since the coefficient of 
\[
 \theta_{\beta_1}^{\otimes a_1}\otimes\cdots \otimes \theta_{\beta_r}^{\otimes a_r}
\]
in
\[
 \Delta_{a_1+\cdots+a_r}\left( \theta_{\beta_1}^{(a_1)} * \cdots * \theta_{\beta_r}^{(a_r)}\right)
 =
 \frac{1}{a_1!\cdots a_r!} \Delta_{a_1+\cdots+a_r}\left( \theta_{\beta_1}^{a_1} * \cdots * \theta_{\beta_r}^{a_r}\right)
\]
is equal to 1, we get $\delta_\ba(\theta_\ba)=1$.
This finishes the proof.
\end{proof}

\begin{Cor}\label{cor:PBW2}
The set
\[
\cP^* = \{ \delta_\ba \mid \ba \in \N^r \} \subset \cM(H)_{\rm gr}^*
\]
is the dual of the PBW basis 
\[
\cP = \{ \theta_\ba \mid \ba \in \N^r \} \subset \cM(H).
\]
\end{Cor}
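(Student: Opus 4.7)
The plan is to deduce Corollary \ref{cor:PBW2} as an immediate consequence of Theorem \ref{thm:PBW1}, combined with the observation that both families $\cP$ and $\cP^*$ are compatible with the $\N^n$-grading on $\cM(H)$ and on its graded dual $\cM(H)^*_{\rm gr} = \bigoplus_{\bd\in\N^n}\cM(H)_\bd^*$.

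First I would unpack the grading. For each $\ba = (a_1,\ldots,a_r) \in \N^r$ the module $M_\ba = \bigoplus_i M(\beta_i)^{a_i}$ has rank vector $\sum_i a_i \beta_i$, hence dimension vector $\bd(\ba) := \sum_i a_i D\beta_i$. By its very definition, $\delta_{M_\ba}$ vanishes on all components $\cM(H)_\be$ with $\be \ne \bd(\ba)$, so $\delta_\ba \in \cM(H)_{\bd(\ba)}^*$. On the other hand the PBW basis vector $\theta_\ba$ also lies in $\cM(H)_{\bd(\ba)}$, since each $\theta_{\beta_i}$ is homogeneous of degree $D\beta_i$ and the convolution product is compatible with the $\N^n$-grading.

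Next, fix $\bd \in \N^n$ and put $I(\bd) := \{\ba \in \N^r \mid \bd(\ba) = \bd\}$. The PBW theorem underlying $\cP$ asserts that $\{\theta_\ba \mid \ba \in I(\bd)\}$ is a (finite) basis of the finite-dimensional vector space $\cM(H)_\bd$. By Theorem \ref{thm:PBW1}, the pairing matrix of this basis against $\{\delta_\ba \mid \ba \in I(\bd)\}$ equals the identity. It follows that $\{\delta_\ba \mid \ba \in I(\bd)\}$ is a basis of $\cM(H)_\bd^*$ dual to $\{\theta_\ba \mid \ba \in I(\bd)\}$. Summing over $\bd \in \N^n$ gives the corollary.

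I do not anticipate any genuine obstacle: the content of the corollary beyond Theorem \ref{thm:PBW1} is only that the duality statement truly produces a basis, and this reduces to the finite-dimensional linear algebra fact in each graded piece. The verification of grading compatibility, which might at first appear to be the one subtle point, is itself automatic from the definitions of $\delta_M$ and the grading of $\cM(H)$.
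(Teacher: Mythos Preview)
Your proposal is correct and is precisely the routine linear-algebra elaboration that the paper leaves implicit: in the paper the corollary is stated with no proof, immediately after Theorem~\ref{thm:PBW1}, since the orthogonality relations $\delta_\ba(\theta_\bb)=\delta_{\ba,\bb}$ together with the fact that $\{\theta_\ba\}$ is already known to be a basis force $\{\delta_\ba\}$ to be the dual basis in each finite-dimensional graded piece. Your unpacking of the grading and the reduction to each $\cM(H)_\bd$ is exactly the intended argument.
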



\section{$F$-polynomials and $g$-vectors}\label{sect-Fpoly-gvect}


Fomin and Zelevinsky \cite{FZ4}  have shown how to express cluster variables 
in terms of $F$-polynomials and $g$-vectors. 
Let us recall this formula for the coefficient-free cluster algebra $\AA$.

For $j = 1,\ldots,n$, define: 
\[
z_j = \prod_{i=1}^{n} u_i^{b_{ij}}.                              
\]
To every cluster variable $x(\beta)$, Fomin and Zelevinsky attach a 
\emph{$g$-vector} 
$g_\beta\in\Z^n$ \cite[Section 6]{FZ4}, and an $F$-polynomial 
$F_\beta \in \Z[t_1,\ldots,t_n]$ \cite[Definition 3.3]{FZ4}.
One then has \cite[Corollary 6.3]{FZ4}:
\begin{equation}\label{eq-g-F}
x(\beta) = u^{g_\beta} F_\beta(z), 
\end{equation}
where $u^{g_\beta} = u_1^{g_{\beta,1}}\cdots u_n^{g_{\beta,n}}$ and 
$z = (z_1,\ldots,z_n)$.
Note that the denominator of \cite[Corollary 6.3]{FZ4} does not occur 
in~\eqref{eq-g-F} because $\AA$ is coefficient-free. 

\begin{Lem}\label{Lem-5.1}
Let $M$ be a locally free $H$-module with rank vector $\rkv(M) = (m_i)$.
Define $g_M \in \Z^n$ by:
\begin{equation}\label{eq-gM}
 g_{M,i} = -m_i + \sum_{j\in\Omega(-,i)} m_j |c_{ij}|,\qquad (1\le i \le n),
\end{equation}
and $F_M\in\Z[t_1,\ldots,t_n]$ by:
\[
 F_M(t_1,\ldots,t_n) = \sum_{\br\in\N^n} \chi(\Gr_\lf(\br,M)) t_1^{r_1}\cdots t_n^{r_n}. 
\]
Then 
\[
 X_M = u^{g_M}F_M(z). 
\]
\end{Lem}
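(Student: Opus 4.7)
The strategy is to expand both sides as formal sums indexed by rank vectors $\br \in \N^n$ and match the $u_i$-exponents of each monomial term by term. Since both $X_M$ and $u^{g_M} F_M(z)$ have the same Grassmannian coefficients $\chi(\Gr_\lf(\br, M))$, the claim reduces to a purely combinatorial identity of exponents.

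First, I would unfold the right-hand side. Substituting $z_j = \prod_i u_i^{b_{ij}}$ gives
\[
 u^{g_M} F_M(z) = \sum_{\br\in\N^n} \chi(\Gr_\lf(\br,M)) \prod_{i=1}^n u_i^{g_{M,i}+\sum_{j} b_{ij} r_j}.
\]
For the left-hand side, since $v_i = u_i^{1/c_i}$, I would rewrite
\[
 X_M = \sum_{\br\in\N^n} \chi(\Gr_\lf(\br,M)) \prod_{i=1}^n u_i^{(-\<\br,\,\alpha_i\>_H-\<\alpha_i,\,\rkv(M)-\br\>_H)/c_i}.
\]
So the lemma is equivalent to the scalar identity, for every $i$ and every $\br$,
\[
 c_i\left(g_{M,i} + \sum_{j} b_{ij} r_j\right) = -\<\br,\,\alpha_i\>_H - \<\alpha_i,\,\rkv(M)-\br\>_H.
\]

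Next I would evaluate both sides of this identity using the piecewise definition of $\<\alpha_i,\alpha_j\>_H$ recalled in Section~\ref{subsec:recoll}. For the right-hand side one gets
\[
 -r_i c_i - \sum_{j\,:\,(i,j)\in\Omega} r_j c_j c_{ji} - (m_i-r_i)c_i - \sum_{j\,:\,(j,i)\in\Omega} (m_j-r_j) c_i c_{ij},
\]
which after dividing by $c_i$ and invoking the symmetry relation $c_j c_{ji} = c_i c_{ij}$ (since $DC$ is symmetric) collapses to
\[
 -m_i + \sum_{j\,:\,(j,i)\in\Omega} (r_j-m_j) c_{ij} - \sum_{j\,:\,(i,j)\in\Omega} r_j c_{ij}.
\]
For the left-hand side I would split the sum $\sum_j b_{ij} r_j$ along the two branches in the definition of $b_{ij}$ and rewrite $|c_{ij}| = -c_{ij}$ in the formula (\ref{eq-gM}) for $g_{M,i}$; this gives exactly the same expression.

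The proof is therefore a routine computation, and the only mildly subtle point is keeping the orientation conventions straight: $g_{M,i}$ involves a sum over $j$ with $(j,i)\in\Omega$ (the arrows entering $i$ in $Q^\circ$), while the two contributions from $b_{ij}$ distinguish the arrows at $i$ by their direction, matching precisely the two nonzero cases of $\<\alpha_i,\alpha_j\>_H$ and $\<\alpha_j,\alpha_i\>_H$. The symmetrization identity $c_i c_{ij} = c_j c_{ji}$ is what allows the factor $1/c_i$ coming from $v_i = u_i^{1/c_i}$ to clear cleanly, producing integer exponents and confirming in passing the claim of integrality noted after Definition~\ref{def-1.1}.
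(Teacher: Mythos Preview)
Your proof is correct and follows essentially the same route as the paper's own proof: both reduce the claim to checking that the exponent of each $u_i$ in the monomial indexed by $\br$ agrees on the two sides, which amounts to unwinding the definitions of $b_{ij}$, $g_{M,i}$, and $\<\cdot,\cdot\>_H$ and applying the symmetrizer identity $c_i c_{ij} = c_j c_{ji}$. One small slip in your closing paragraph: under the paper's convention an element $(j,i)\in\Omega$ corresponds to an arrow $\alpha_{ji}\colon i\to j$, so $\Omega(-,i)$ parametrizes arrows \emph{leaving} $i$ in $Q^\circ$, not entering; this does not affect your computation, which correctly uses the formal condition $(j,i)\in\Omega$ throughout.
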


\begin{proof}
We have 
\[
 z_j = \prod_{i=1}^{n} u_i^{b_{ij}} = \prod_{i\in\Omega(j,-)} u_i^{c_{ij}}\prod_{i\in\Omega(-,j)}u_i^{-c_{ij}},
\]
hence, for $\br = (r_i) \in \N^n$, 
\[
 z^\br = \prod_j \prod_{i\in\Omega(j,-)}u_i^{r_jc_{ij}} \prod_{i\in\Omega(-,j)}u_i^{-r_jc_{ij}}
 = \prod_i \prod_{j\in\Omega(-,i)}u_i^{-r_j|c_{ij}|} \prod_{j\in\Omega(i,-)}u_i^{r_j|c_{ij}|}.
\]
On the other hand, using the definition of $\<\cdot, \cdot\>$ we have
\begin{equation}
\prod_{i=1}^n v_i^{-\<\br,\,\rkv(E_i)\>-\<\rkv(E_i),\,\rkv(M)-\br\>}
= \prod_i u_i^{-m_i} \prod_{i}\prod_{j\in \Omega(-,i)}\prod_{k\in\Omega(i,-)} u_i^{(m_j-r_j)|c_{ij}|+r_k|c_{ik}|}. 
\end{equation}
Therefore 
using the definition of $g_M$, we get
\[
\prod_{i=1}^n v_i^{-\<\br,\,\rkv(E_i)\>-\<\rkv(E_i),\,\rkv(M)-\br\>} = u^{g_M}z^\br, 
\]
which yields
\[
 X_M = u^{g_M} F_M(z),
\]
as required.
\end{proof}

Therefore, the proof of Theorem~\ref{Thm1}~(c) is reduced to proving that, for 
every positive root $\beta$ of $C$, 
we have
\begin{eqnarray}
 g_{M(\beta)} &=& g_\beta,\\ \label{eq-3.2}
 F_{M(\beta)} &=& F_{\beta}. \label{eq-3.3}
\end{eqnarray}


\section{$g$-vectors and injective coresolutions} \label{sec:gvectors}

Let $M$ be a locally free $H$-module. By Theorem~\ref{thma:GLS1}, $M$ has an 
injective coresolution of length 1:
\[
 0 \to M \to I^0 \to I^1 \to 0.
\]
Let $I^0 = \bigoplus_{1\le k\le n} I_k^{a_k}$ and 
$I^1 = \bigoplus_{1\le k\le n} I_k^{b_k}$ be the decompositions
of $I^0$ and $I^1$ into indecomposable summands.
\begin{Prop}
 We have $g_M = (b_k-a_k)_{1\le k\le n}$.
\end{Prop}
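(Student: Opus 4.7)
The plan is to evaluate the homological bilinear form $\bil{\rkv(E_i),-}_H$ on both sides of the rank-vector identity coming from the injective coresolution, and recognize the outcome as the defining formula for $g_{M,i}$ in \eqref{eq-gM}.

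First I would note that each indecomposable injective $I_k$ has $\idim(I_k)=0<\infty$ and is therefore locally free by Theorem~\ref{thma:GLS1}. Hence the coresolution lives in $\rep_\lf(H)$; and since rank vectors are additive on short exact sequences in this exact subcategory (checking is vertex-wise, using that a short exact sequence of free $H_i$-modules has additive ranks), we get
\[
\rkv(M) \;=\; \sum_{k=1}^n (a_k-b_k)\,\rkv(I_k).
\]

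The rest is a two-line computation. On the left-hand side, the explicit formula for $\bil{\alpha_i,\alpha_j}_H$ recalled in Section~\ref{subsec:recoll}, combined with $c_{ij}=-|c_{ij}|$ for $j\in\Omega(-,i)$, yields
\[
\bil{\rkv(E_i),\rkv(M)}_H \;=\; c_i\,m_i - c_i\sum_{j\in\Omega(-,i)} m_j\,|c_{ij}| \;=\; -c_i\,g_{M,i}.
\]
For the right-hand side, since $I_k$ is injective we have $\Ext^1_H(E_i,I_k)=0$, so $\bil{\rkv(E_i),\rkv(I_k)}_H = \dim_\C\Hom_H(E_i,I_k)$. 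The standard identification $\Hom_H(-,I_k)\cong D(e_k\cdot-)$ valid for any finite-dimensional $\C$-algebra (where $e_k$ is the primitive idempotent at vertex $k$) then gives $\dim_\C\Hom_H(E_i,I_k)=\dim_\C (E_i)_k = c_i\,\delta_{ik}$, because $E_i=H_i$ is supported only at vertex $i$ with $\C$-dimension $c_i$. Pairing the rank-vector identity with $\rkv(E_i)$ therefore produces $-c_i\,g_{M,i}=(a_i-b_i)\,c_i$, whence $g_{M,i}=b_i-a_i$ for every $i$.

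There is no real obstacle here: the argument just combines additivity of $\bil{-,-}_H$ under short exact sequences of locally free modules, the vanishing of $\Ext^1_H(-,I_k)$ on an injective, and the elementary identity $\dim_\C\Hom_H(M,I_k)=\dim_\C M_k$. The only small point to keep in mind is that $I_k$ is known to lie in $\rep_\lf(H)$ only through Theorem~\ref{thma:GLS1}; after that, everything is formal manipulation of the bilinear form.
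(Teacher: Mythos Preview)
Your proof is correct and follows essentially the same approach as the paper. Both arguments hinge on the identity $\bil{\rkv(E_i),\rkv(I_k)}_H=c_i\delta_{ik}$, the explicit formula for $\bil{\cdot,\cdot}_H$ to recognize $-c_i g_{M,i}=\bil{\rkv(E_i),\rkv(M)}_H$, and additivity of rank vectors along the coresolution; the paper simply phrases the last step as ``compute $g_{I_j}=-e_j$ and use that $\rkv(M)\mapsto g_M$ is linear'', while you pair both sides of $\rkv(M)=\sum_k(a_k-b_k)\rkv(I_k)$ with $\rkv(E_i)$ directly.
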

\begin{proof}
We have $\<\rkv(E_i),\rkv(I_j)\>_H = \delta_{ij}c_i$, by definition of the 
homological bilinear form $\<\cdot,\cdot\>_H$. 
Comparing this equation with (\ref{eq-gM}) and taking into account the explicit
expression of $\<\cdot,\cdot\>_H$ 
given in Section~\ref{subsec:recoll}, we get that 
\[
 g_{I_j} = -[\delta_{jk}]_{1\le k\le n}.
\]
For an arbitrary locally free $H$-module, we have 
$\rkv(M) = \rkv(I^0) - \rkv(I^1)$, and the result follows from the
fact that, by (\ref{eq-gM}), the map $\rkv(M) \mapsto g_M$ is $\Z$-linear. 
\end{proof}

From now on, we fix a $(+)$-admissible sequence 
$\bi=(i_1,\ldots,i_n)$~\cite[Section 2.5]{GLS1}.
This means that $\{i_1,\ldots,i_n\} = \{1,\ldots,n\}$, $i_1$ is a sink of the 
acyclic quiver $Q^\circ(C,\Omega)$, 
and $i_k$ is a sink of $Q^\circ(C,s_{i_{k-1}}\cdots s_{i_{1}}(\Omega))$ for 
$2\le k\le n$.
Let $c=s_{i_1}\cdots s_{i_n}$. Then $c$ is a Coxeter element of $W$.
Moreover, $c$ is such that the initial seed of
the cluster algebra $\cA$ which we have defined from the Cartan matrix $C$ and 
the orientation $\Omega$, 
coincides with the initial seed of the cluster algebra of \cite{YZ} defined 
from $C$ and $c$.

\begin{Prop}\label{prop:g-vector}
For $\beta\in\Delta^+(C)$, we have $g_{M(\beta)} = g_\beta$. 
\end{Prop}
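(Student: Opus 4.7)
The plan is to directly compare two explicit descriptions of the $g$-vector: the one for $g_{M(\beta)}$ coming from the preceding proposition, and the one for $g_\beta$ obtained from Yang-Zelevinsky~\cite{YZ}.

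First, Lemma~\ref{Lem-5.1} applied to $M = M(\beta)$ gives the closed formula
\[
g_{M(\beta),i} \;=\; -m_i + \sum_{j\in\Omega(-,i)} m_j\,|c_{ij}|, \qquad (1 \le i \le n),
\]
where $\beta = \sum_i m_i\alpha_i$. This is an explicit $\Z$-linear expression in $\beta$ depending only on $C$ and $\Omega$. Because the $(+)$-admissible sequence $\bi$ was chosen precisely so that the initial seed of $\AA$ coincides with the initial seed of the acyclic cluster algebra considered in \cite{YZ} attached to the Coxeter element $c = s_{i_1}\cdots s_{i_n}$, the $g$-vector $g_\beta$ of $x(\beta)$ equals the one described there. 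I would then invoke the explicit formula from \cite{YZ}, which gives $g_\beta$ as a $\Z$-linear combination of the standard basis vectors $e_1,\ldots,e_n$ in terms of combinatorial data attached to $\beta$ and $c$ (essentially encoding how $\beta$ appears in the $c$-orbits on the set of almost positive roots).

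The equality $g_{M(\beta)} = g_\beta$ then reduces to a coefficient-by-coefficient comparison of these two formulas for each positive root $\beta$. The main obstacle is the bookkeeping required to match YZ's parameterization (fundamental weights, $c$-orbit description of positive roots) with the conventions of the present section (standard basis, orientation $\Omega$, simple roots): once the dictionary is set up, one checks that the Coxeter element built from $\Omega$ via the $(+)$-admissible ordering produces in YZ's formula exactly the coefficients $|c_{ij}|$ for $j \in \Omega(-, i)$ that appear in~\eqref{eq-gM}. No deep categorical input is needed at this stage beyond the preceding proposition; the heavier machinery (generalized minors, evaluations on one-parameter subgroups) is reserved for the $F$-polynomial comparison in the following sections.
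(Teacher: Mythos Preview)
Your overall strategy is sound: both $\beta\mapsto g_{M(\beta)}$ and $\beta\mapsto g_\beta$ are $\Z$-linear, so the result amounts to comparing two linear maps. But there is a genuine gap in how you propose to carry this out. You speak of invoking ``the explicit formula from \cite{YZ}'' for $g_\beta$ and then matching coefficients, yet what \cite[Theorem~1.8]{YZ} actually supplies is a \emph{characterization}, not a closed formula: writing $\gamma_\beta$ for the weight with coordinates $g_\beta$ in the fundamental-weight basis, one has $c^{-1}(\gamma_\beta)-\gamma_\beta=\beta$. So the comparison you describe cannot be done by reading off coefficients; you must instead show that $g_{M(\beta)}$, viewed as a weight, satisfies this same identity. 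Your sketch does not do this, and the sentence ``one checks that the Coxeter element \ldots\ produces in YZ's formula exactly the coefficients $|c_{ij}|$'' is an assertion of the conclusion rather than an argument.

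The paper closes this gap by exploiting the preceding proposition, which you cite but do not actually use. Since $I_{i_k}$ is injective, its coresolution is trivial and the proposition gives $g_{I_{i_k}}=-e_{i_k}$, i.e.\ the weight $-\varpi_{i_k}$. One then checks the YZ characterization directly on this basis: using the known rank vector $\rkv(I_{i_k})=s_{i_n}\cdots s_{i_{k+1}}(\alpha_{i_k})$, a short computation shows $c^{-1}(-\varpi_{i_k})+\varpi_{i_k}=\rkv(I_{i_k})$, and linearity finishes the argument since the $\rkv(I_k)$ form a basis of $\Z^n$. This is the substantive step your proposal is missing; it replaces an unperformed matrix comparison with a clean verification on a well-chosen basis where both sides are transparent.
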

\begin{proof}
By \cite[Theorem 1.8]{YZ}, the linear map sending $g_\beta$ to $\beta$ can be 
described in the following way.
Let $\gamma_\beta$ be the weight of $\g$ whose coordinates on the basis 
$\{\varpi_k\}$ of fundamental weights
are the components of $g_\beta$. Then, $c^{-1}(\gamma_\beta)-\gamma_\beta = \beta$.

We know \cite[Lemma 3.3]{GLS1} that the indecomposable injective $H$-module 
$I_{i_k}$ has rank vector
\[
 \rkv(I_{i_k}) = s_{i_n}\cdots s_{i_{k+1}}(\alpha_{i_k}).
\]
Also, by the previous proposition $g_{I_{i_k}}$ expressed as above as a weight 
is equal to $-\varpi_{i_k}$. Now
\[
 c^{-1}(-\varpi_{i_k}) = -s_{i_n}\cdots s_{i_{k}}(\varpi_{i_k})
 = -s_{i_n}\cdots s_{i_{k+1}}(\varpi_{i_k}-\alpha_{i_k}) = -\varpi_{i_k} + \rkv(I_{i_k}).
\]
This proves that $g_{I_k} = g_{\rkv(I_k)}$ for $1\le k\le n$. The result follows 
since $(\rkv(I_k))_{1\le k\le n}$
is a basis of $\Z^n$.
\end{proof}


\section{Another expression of $F_M$} \label{sec:otherexprFM}

From now on, we fix a $(+)$-admissible sequence $\bi=(i_1,\ldots,i_n)$. 

For a locally free $H$-module $M$ and an arbitrary sequence 
$\bj=(j_1,\ldots,j_s)$ of elements of $\{1,\ldots,n\}$,   
let $\cF_{\bj}(M)$ denote the quasi-projective variety of filtrations
\[
 \mathfrak{f} : 0 = M_0 \subset M_1 \subset \cdots \subset M_s = M,
\]
where $M_k/M_{k-1} \cong E_{j_k}$ for $1\le k \le s$.

\begin{Prop}\label{Prop-4.1}
Let $M$ be a locally free $H$-module with rank vector $\bm\in\N^n$.
Let $\br \in \N^n$ with $r_i \le m_i$ for $1\le i\le n$. Set 
$\bj_\br := (i_1^{r_{i_1}},\ldots,i_n^{r_{i_n}},i_1^{m_{i_1}-r_{i_1}},\ldots,i_n^{m_{i_n}-r_{i_n}})$,
where $i_k^a$ means $i_k$ repeated $a$ times.
The map $\pi : \cF_{\bj_\br}(M) \to \Gr_\lf(r,M)$ given by 
$\pi(\mathfrak{f}) = M_{r_1+\cdots+r_n}$
is a surjective morphism whose fibers all have the same Euler characteristic 
$r_1!(m_1-r_1)!\cdots r_n!(m_n-r_n)!$.
Hence 
\[
 \chi(\Gr_\lf(\br,M)) = \frac{1}{r_1!(m_1-r_1)!\cdots r_n!(m_n-r_n)!} \chi(\cF_{\bj_\br}(M)).
\]
\end{Prop}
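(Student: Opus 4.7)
The plan is to show $\pi$ is well-defined and surjective, decompose its fibers as products of two smaller filtration varieties, and compute the Euler characteristic of each factor by an inductive peeling-off argument using the $(+)$-admissibility of $\bi$. Throughout set $s := r_{i_1}+\cdots+r_{i_n}$.

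For well-definedness, $M_s$ is built by iterated extensions of rank-one locally free modules $E_{i_1},\ldots,E_{i_n}$ with multiplicities $r_{i_1},\ldots,r_{i_n}$; restricting each extension to any vertex $i$ yields a short exact sequence whose quotient is either $H_i$ or $0$, hence splits as $H_i$ is free over itself. Thus $(M_s)_i$ is $H_i$-free of rank $r_i$ for every $i$, so $M_s \in \Gr_\lf(\br,M)$. Given $N$ in the image of $\pi$, any $\mathfrak{f}\in\pi^{-1}(N)$ restricts to a filtration of $N$ and descends to a filtration of $M/N$, yielding a canonical identification
\[
\pi^{-1}(N) \;\cong\; \cF_{(i_1^{r_{i_1}},\ldots,i_n^{r_{i_n}})}(N) \;\times\; \cF_{(i_1^{m_{i_1}-r_{i_1}},\ldots,i_n^{m_{i_n}-r_{i_n}})}(M/N).
\]

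The key step is the claim that for any locally free $H$-module $L$ with $\rkv(L)=(l_1,\ldots,l_n)$, the variety $\cF_{(i_1^{l_{i_1}},\ldots,i_n^{l_{i_n}})}(L)$ is nonempty and has Euler characteristic $\prod_{k} l_{i_k}!$. I would prove this by induction on $l_1+\cdots+l_n$. Because $i_1$ is a sink of $Q^\circ(C,\Omega)$, no arrow of $Q^\circ$ has source $i_1$, so $L_{i_1}\subset L$ is $H$-stable, and any free rank-one $H_{i_1}$-summand of $L_{i_1}$ extends by zero to an $H$-submodule isomorphic to $E_{i_1}$, with locally free quotient of rank vector $(l_1,\ldots,l_n)-\alpha_{i_1}$. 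The $(+)$-admissibility of $\bi$ ensures that $(i_2,\ldots,i_n)$ remains admissible on the sub-quiver obtained by removing vertex $i_1$, so the induction hypothesis applies to the quotient. Via reduction modulo $\varepsilon_{i_1}$, Nakayama's lemma exhibits the variety $\cG$ of free rank-one $H_{i_1}$-summands of $L_{i_1}$ as an affine bundle over $\PP^{l_{i_1}-1}(\C)$, so $\chi(\cG)=l_{i_1}$. Fibering the filtration variety over $\cG$ and combining with the induction hypothesis on the fibers gives $l_{i_1}\cdot(l_{i_1}-1)!\cdot \prod_{k\ge 2}l_{i_k}! = \prod_k l_{i_k}!$.

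Applying the key step to $L=N$ and to $L=M/N$ shows that both factors of $\pi^{-1}(N)$ are nonempty, so $\pi$ is surjective, and that every fiber has the stated Euler characteristic $\prod_k r_k!(m_k-r_k)!$. The formula for $\chi(\Gr_\lf(\br,M))$ then follows by additivity of Euler characteristic for a morphism with constant-Euler-characteristic fibers. The main obstacle is the geometric claim that the variety of free rank-one summands of a free $H_{i_1}$-module is an affine bundle over a projective space of the correct dimension: this requires a careful analysis of lifts modulo $\varepsilon_{i_1}$ together with the action of the unit group $H_{i_1}^*$, rather than a naive dimension count.
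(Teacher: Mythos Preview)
Your approach is correct and essentially the same as the paper's: the paper factors $\pi=\pi'\circ\pi''$ through an intermediate variety $\cF'_{\bj_\br}(M)$ of ``block'' filtrations (successive quotients $E_{i_k}^{r_{i_k}}$, then $E_{i_k}^{m_{i_k}-r_{i_k}}$), shows $\pi'$ is an isomorphism, and identifies each fiber of $\pi''$ as an affine bundle over a product of complete flag varieties, whereas you peel off one $E_{i_k}$ at a time and see iterated affine bundles over projective spaces --- the underlying geometric fact (free rank-one $H_{i_1}$-submodules of $H_{i_1}^{l}$ form an affine bundle over $\PP^{l-1}$) is identical, with the paper using the socle map $M_1\mapsto \Ker(\varepsilon_{i_1}|_{M_1})$ where you use reduction mod $\varepsilon_{i_1}$. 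One detail you (and the paper) leave implicit: for surjectivity you must apply your key step to $L=M/N$ for an \emph{arbitrary} $N\in\Gr_\lf(\br,M)$, so you need $M/N$ locally free; this holds because each $H_i$ is self-injective, hence a free $H_i$-submodule of a free $H_i$-module is automatically a direct summand.
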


\begin{proof}
Let us introduce the intermediate variety $\cF'_{\bj_\br}(M)$ of filtrations
\[
 \mathfrak{f}' : 0 = M'_0 \subset M'_1 \subset \cdots \subset M'_{2n} = M,
\]
where $M'_k/M'_{k-1} \cong E_{i_k}^{r_k}$ and $M'_{k+n}/M'_{k+n-1} \cong E_{i_k}^{m_k-r_k}$ for $1\le k \le n$.
We can factor $\pi = \pi' \circ \pi''$, where $\pi' : \cF'_{\bj_\br}(M) \to \Gr_\lf(\br,M)$ is defined by
$\pi'(\mathfrak{f}') = M'_n$ and $\pi'' : \cF_{\bj_\br}(M) \to \cF'_{\bj_\br}(M)$ is defined by
\[
 \pi''(\mathfrak{f}) = M_0 \subset M_{r_{i_1}} \subset M_{r_{i_1}+r_{i_2}} \subset \cdots \subset M.
\]
Let $N$ be a locally free submodule of $M$ with rank $\br$. 
Since $i_1$ is a sink, $N$ has a unique submodule $M'_1$ isomorphic to 
$E_{i_1}^{r_{i_1}}$. 
More generally, the assumption on $\bi$ implies that there is a unique 
filtration 
\[
 0 = M'_0\subset M'_1 \subset \cdots \subset M'_{n-1} \subset M'_n= N  
\]
with $M'_k/M'_{k-1}\cong E_{i_k}^{r_{i_k}}$ for $1\le k\le n$. Similarly, 
there is a unique filtration
\[
 N = M'_n\subset M'_{n+1} \subset \cdots \subset M'_{2n} = M  
\]
with $M'_{n+k}/M'_{n+k-1}\cong E_{i_k}^{m_{i_k}-r_{i_k}}$ for $1\le k\le n$. 
Therefore $\pi'$ is an isomorphism. 

Consider now the fiber $(\pi'')^{-1}(\mathfrak{f}')$ of $\pi''$ over 
$\mathfrak{f}'$. To determine $\mathfrak{f}$ in this fiber, we
must first choose a submodule $M_1$ of $M'_1$ isomorphic to $E_{i_1}$. 
Since $M'_{1} \cong E_{i_1}^{r_{i_1}}$, we claim that the variety of submodules of
$M'_1$ isomorphic to $E_{i_1}$ has a natural fibration over
the projective space $\PP(\C^{r_{i_1}})$ with fibers isomorphic to affine spaces 
(Indeed the kernel $K$ of
the structure map $\varepsilon_{i_1}$ on $M'_1$ is an $r_{i_1}$-dimensional 
subspace, and the kernel of the restriction
of $\varepsilon_{i_1}$ to the submodule $M_1$ is a one-dimensional line $L$ 
inside $K$. The map 
$M_1 \mapsto L$ is the required fibration, with fibers isomorphic to 
$\C^{(c_{i_1}-1)(r_{i_1}-1)}$.)
More generally, the variety of filtrations 
\[
M_1\subset M_2 \subset \cdots\subset M_{r_{i_1}}=M'_1
\]
with $M_k/M_{k-1} \cong E_{i_1}$ has a natural
fibration over the flag variety $\Fl(\C^{r_{i_1}})$, with fibers isomorphic to 
affine spaces.
Repeating this for every step of $\mathfrak{f}'$, we get that 
$(\pi'')^{-1}(\mathfrak{f}')$
has a fibration with basis the product of flag varieties
\[
\Fl(\C^{r_{i_1}})\times \Fl(\C^{r_{i_2}}) \times\cdots\times \Fl(\C^{m_{i_n}-r_{i_n}}), 
\]
and fibers isomorphic to affine spaces.
The proposition now follows from the fact that $\chi(\Fl(\C^{k}))= k!$. 
\end{proof}

Let $G$ be a complex simple and simply connected algebraic group with Cartan 
matrix~$C$.
Let $N$ be a maximal unipotent subgroup of $G$ with Lie algebra $\n$. 
It is well-known that the algebra $\C[N]$ of 
polynomial functions on $N$ can be regarded as the graded dual of the 
Hopf algebra $U(\n)$.
Hence we get a natural isomorphism $\upsilon : \cM(H)_{\rm gr}^* \to \C[N]$. 

For a locally free $H$-module $M$, 
let $\varphi_M = \upsilon(\delta_M)$ denote the polynomial function on $N$ 
corresponding to the linear form $\delta_M$.
It can be described more explicitly as follows. Let $e_i\ (1\le i\le n)$ be 
the Chevalley generators of $U(\n)$, and let 
\[
x_i(t) = \exp_i(te_i), \qquad (t\in \C,\ 1\le i\le n), 
\]
be the corresponding one-parameter subgroups of $N$. Then, for arbitrary 
sequences 
$(j_1,\ldots,j_k) \in \{1,\ldots,n\}^k$ and $(t_1,\ldots,t_k)\in \C^k$ we have
\begin{equation}
 \varphi_M(x_{j_1}(t_1)\cdots x_{j_k}(t_k)) =
 \sum_{(a_1,\ldots,a_k)\in\N^k} \chi\left(\cF_{\left(j_1^{a_1},\ldots,j_k^{a_k}\right)}(M)\right)
 \frac{t_1^{a_1}\cdots t_k^{a_k}}{a_1!\cdots a_k!}.
\end{equation}
(Of course, only finitely many tuples $(a_1,\ldots,a_k)$ can give a nonzero 
contribution to this sum.)
Using Proposition~\ref{Prop-4.1}, we then deduce immediately the next 
proposition.
\begin{Prop}\label{Prop4-2}
Let $\bi=(i_1,\ldots,i_n)$ be a fixed $(+)$-admissible sequence. 
For every locally free $H$-module $M$ we have
\[
 F_M(t_1,\ldots,t_n) = \varphi_M(x_{i_1}(t_1)\cdots x_{i_n}(t_n)x_{i_1}(1)\cdots x_{i_n}(1)).
\]
\end{Prop}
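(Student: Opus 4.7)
The plan is to combine the explicit power-series expansion of $\varphi_M$ along a product of one-parameter subgroups (displayed just above the statement) with Proposition~\ref{Prop-4.1}, and then to match the outcome with the definition of $F_M$.

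First, I would apply that formula to the element $x_{i_1}(t_1)\cdots x_{i_n}(t_n)x_{i_1}(1)\cdots x_{i_n}(1)$, viewed as a single word of length $2n$ in the $x_j(s)$'s with index sequence $(i_1,\ldots,i_n,i_1,\ldots,i_n)$ and parameters $(t_1,\ldots,t_n,1,\ldots,1)$. This rewrites the right-hand side as a double sum indexed by $(\ba,\bb)\in\N^n\times\N^n$, whose general term involves $\chi(\cF_{(i_1^{a_1},\ldots,i_n^{a_n},i_1^{b_1},\ldots,i_n^{b_n})}(M))$ divided by $a_1!\cdots a_n!\,b_1!\cdots b_n!$ and multiplied by $t_1^{a_1}\cdots t_n^{a_n}$.

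Next, I would observe that this filtration variety is empty unless $a_k+b_k=m_{i_k}$ for every $k$, since the total number of $E_{i_k}$-subquotients along any filtration of $M$ must equal the rank $m_{i_k}$. Hence $\bb$ is determined by $\ba$, and setting $r_{i_k}:=a_k$ makes the index sequence coincide with $\bj_\br$ in the notation of Proposition~\ref{Prop-4.1}. That proposition then gives $\chi(\cF_{\bj_\br}(M)) = \bigl(\prod_k r_k!(m_k-r_k)!\bigr)\chi(\Gr_\lf(\br,M))$, and the product of factorials cancels exactly against the denominator coming from the $\varphi_M$ expansion. The resulting expression is $\sum_\br \chi(\Gr_\lf(\br,M))\prod_k t_k^{r_{i_k}}$, which matches $F_M$ once the formal variable labelling is set up so that the $k$-th variable on the right corresponds to vertex $i_k$.

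There is essentially no obstacle here: all of the geometric content has already been packaged in Proposition~\ref{Prop-4.1}, and what remains is routine combinatorial bookkeeping. The one point requiring a bit of care is the matching of the formal variables $t_k$ on the two sides under the permutation $\bi=(i_1,\ldots,i_n)$, so that the exponents $r_{i_k}$ coming out of the $\varphi_M$ expansion line up correctly with the exponents $r_k$ in the definition of $F_M$.
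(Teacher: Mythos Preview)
Your proposal is correct and follows exactly the paper's approach: the paper simply states that the proposition is deduced immediately from Proposition~\ref{Prop-4.1} and the displayed expansion of $\varphi_M$, and you have filled in precisely those details. Your flag about the variable labelling under the permutation $\bi$ is well taken; the paper's formula is implicitly identifying $t_k$ with the variable at vertex $i_k$, and once that convention is adopted the matching with $F_M$ is immediate.
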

In view of this proposition, in order to prove (\ref{eq-3.3}) we need to 
understand better the 
polynomial functions $\varphi_{M(\beta)}$ for $\beta\in\Delta^+(C)$. 
This will be done in the next section. Before that we can already prove:

\begin{Cor}\label{cor-poly}
For every locally free $H$-module $M$, one can write $X_M$ as a polynomial in 
the functions $X_{M(\beta)}$ with $\beta \in \Delta^+(C)$.
\end{Cor}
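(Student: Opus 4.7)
The plan is to expand $\delta_M$ in the dual PBW basis provided by Corollary~\ref{cor:PBW2}, then transfer this expansion through the algebra isomorphism $\upsilon\colon \cM(H)_{\rm gr}^* \to \C[N]$ to obtain a corresponding expansion of $\varphi_M$ as a polynomial in the $\varphi_{M(\beta_i)}$. Evaluation at the specific element of $N$ appearing in Proposition~\ref{Prop4-2} will produce the analogous expansion for $F_M$, and then multiplication by $u^{g_M}$ will convert this to the desired expansion of $X_M$.

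Concretely, I would first write
\[
\delta_M = \sum_{\ba \in \N^r} \theta_\ba(M)\, \delta_\ba
\]
using Corollary~\ref{cor:PBW2}. Thanks to the multiplicativity (\ref{eq-2.1}), one has $\delta_\ba = \prod_{i=1}^r \delta_{M(\beta_i)}^{a_i}$ in the commutative algebra $\cM(H)_{\rm gr}^*$, so after applying the algebra isomorphism $\upsilon$ this becomes
\[
\varphi_M = \sum_{\ba} \theta_\ba(M) \prod_{i=1}^r \varphi_{M(\beta_i)}^{a_i}
\]
in $\C[N]$. Evaluating both sides at $x_{i_1}(t_1)\cdots x_{i_n}(t_n)\,x_{i_1}(1)\cdots x_{i_n}(1)$ and invoking Proposition~\ref{Prop4-2} gives
\[
F_M(t_1,\ldots,t_n) = \sum_{\ba} \theta_\ba(M) \prod_{i=1}^r F_{M(\beta_i)}(t_1,\ldots,t_n)^{a_i}.
\]

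The last step is to multiply by $u^{g_M}$ and distribute termwise. Here I use that $\theta_\ba(M)\neq 0$ forces $\rkv(M) = \sum_i a_i \beta_i$, because $\cM(H)$ is $\N^n$-graded by dimension vectors and $\delta_M$, $\theta_\ba$ sit in graded components indexed by $\rkv(M)$ and $\rkv(M_\ba)$ respectively. Combined with the $\Z$-linearity of $\rkv \mapsto g$ coming from formula (\ref{eq-gM}), this gives $g_M = \sum_i a_i g_{M(\beta_i)}$ on the support of the expansion, so $u^{g_M} = \prod_i \bigl(u^{g_{M(\beta_i)}}\bigr)^{a_i}$ for each contributing $\ba$. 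Substituting $t_j = z_j$ and using Lemma~\ref{Lem-5.1} then yields
\[
X_M = u^{g_M} F_M(z) = \sum_{\ba} \theta_\ba(M) \prod_{i=1}^r X_{M(\beta_i)}^{a_i},
\]
which is the required polynomial expression. I do not anticipate any serious obstacle: the only point requiring care is the grading argument that ensures $g_M$ splits additively across the surviving terms, and this is immediate from the homogeneity of $\delta_M$ in $\cM(H)_{\rm gr}^*$.
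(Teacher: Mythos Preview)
Your proposal is correct and follows essentially the same route as the paper's proof: expand $\delta_M$ in the dual PBW basis via Theorem~\ref{Thm2}/Corollary~\ref{cor:PBW2}, pass through $\upsilon$ and Proposition~\ref{Prop4-2} to get a polynomial expression for $F_M$, and then use the linearity of $g_M$ in $\rkv(M)$ together with Lemma~\ref{Lem-5.1} to lift this to $X_M$. The paper's proof is simply a compressed version of exactly this argument; your write-up makes the grading step and the splitting of $u^{g_M}$ across terms explicit, which is a welcome clarification rather than a departure.
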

\begin{proof}
Using Theorem~\ref{Thm2} and the isomorphism 
$\upsilon\df \cM(H)_{\rm gr}^* \to \C[N]$, we see that 
$\varphi_M$ is a polynomial in the functions $\varphi_{M(\beta)}$. 
Moreover $\C[N]$ is $\N^n$-graded and $\varphi_M$ is homogeneous of 
multi-degree $\rkv(M)$.
Hence by Proposition~\ref{Prop4-2}, $F_M$ is a polynomial in the functions 
$F_{M(\beta)}$.  
Finally, since $g_M$ is a linear function of $\rkv(M)$, it follows from 
Lemma~\ref{Lem-5.1} that $X_M$ is a polynomial in the functions $X_{M(\beta)}$.
\end{proof}

\begin{Cor}
If $C$ is symmetric, Theorem~\ref{Thm1}~(a) holds. 
\end{Cor}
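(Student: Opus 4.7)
The plan is to chain together the two corollaries immediately preceding the statement. First, I would invoke Corollary~\ref{cor-sym}: under the hypothesis that $C$ is symmetric, Theorem~\ref{Thm1}~(c) is already available, so in particular $X_{M(\beta)} = x(\beta)$ for every $\beta \in \Delta^+(C)$. Since each $x(\beta)$ is a cluster variable of $\AA$, we obtain $X_{M(\beta)} \in \AA$ for every positive root $\beta$.

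Next, I would invoke Corollary~\ref{cor-poly}: for an arbitrary locally free $H$-module $M$, the Laurent polynomial $X_M$ admits an expression as a polynomial in the family $\{X_{M(\beta)} \mid \beta \in \Delta^+(C)\}$. Because $\AA$ is a subring of $R$ and each $X_{M(\beta)}$ belongs to $\AA$, any polynomial combination of them also belongs to $\AA$, and hence $X_M \in \AA$, which is exactly the content of Theorem~\ref{Thm1}~(a) in the symmetric case.

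There is no genuine obstacle at this stage: both Corollary~\ref{cor-sym} and Corollary~\ref{cor-poly} have already been fully established (the former via the classical Caldero-Chapoton theorem together with Proposition~\ref{prop-sym}; the latter via Theorem~\ref{Thm2}, the isomorphism $\upsilon\colon \cM(H)^*_{\rm gr} \to \C[N]$, and Proposition~\ref{Prop4-2}), and the argument merely invokes closure of $\AA$ under sums and products. The only point worth noting for the record is that the representation of $X_M$ furnished by Corollary~\ref{cor-poly} is a genuine polynomial rather than a Laurent polynomial in the $X_{M(\beta)}$'s; this is ensured by the $\N^n$-grading of $\C[N]$ together with the homogeneity of $\varphi_M$ of multi-degree $\rkv(M) \in \N^n$, so no closure under inversion of the $X_{M(\beta)}$'s is needed to conclude.
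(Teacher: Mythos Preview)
Your proposal is correct and follows essentially the same argument as the paper: invoke Corollary~\ref{cor-sym} to get $X_{M(\beta)}\in\AA$ for all $\beta\in\Delta^+(C)$, then invoke Corollary~\ref{cor-poly} and closure of $\AA$ under polynomial combinations to conclude $X_M\in\AA$ for every locally free $M$. The additional remarks you make about polynomiality versus Laurent polynomiality are accurate and helpful, but not strictly needed for the argument to go through.
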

\begin{proof}
If $C$ is symmetric, we already know by Corollary~\ref{cor-sym} that the 
functions $X_{M(\beta)}$ with $\beta \in \Delta^+(C)$
are the cluster variables of $\cA$. Hence Corollary~\ref{cor-poly} shows that 
for every locally free $H$-module $M$,
we have $X_M\in \cA$.
\end{proof}


\section{$\varphi_{M(\beta)}$ as a generalized minor}\label{sec:varphi-minor}

Let $w_0$ be the longest element of the Weyl group $W$ associated with $C$.
Let $\bj = (j_1,\ldots,j_r)$ be a reduced word for $w_0$ such that $j_1$ is a 
source of $Q^\circ(C,\Omega)$,
and $j_k$ is a source of $Q^\circ(s_{j_k}\cdots s_{j_{1}}(\Omega))$ for 
$2\le k \le r$.
We call such a word $\bj$ a $(-)$-adapted reduced word for $w_0$.
It is well known that the positive roots can be enumerated as follows:
\[
 \beta_1 = \alpha_{j_1},\quad \beta_2 = s_{j_1}(\alpha_{j_2}),\quad \ldots, \quad \beta_r=s_{j_1}\cdots s_{j_{r-1}}(\alpha_{j_r}).
\]

\begin{Lem}\label{lem10.1}
Such an ordering satifies the condition : 
\[
\Hom_H(M(\beta_k),M(\beta_l)) = 0\quad \mbox{for all}\quad k < l.
\]
\end{Lem}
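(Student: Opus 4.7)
The plan is to reduce to the hereditary tensor algebra $T = T(C,D,\Omega)$ via Proposition~\ref{GLS3-Prop5.5} and then argue in $T$ by induction on $k$ using reflection functors. By that proposition,
\[
\dim_\C \Hom_H(M(\beta_k), M(\beta_l)) = \dim_F \Hom_T(X(\beta_k), X(\beta_l)),
\]
so it is enough to prove $\Hom_T(X(\beta_k), X(\beta_l)) = 0$ for $k < l$. I would proceed by induction on $k$, the statement at each stage being quantified over all Cartan data $(C,D,\Omega)$ equipped with a $(-)$-adapted reduced word $\bj$.

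For the base step $k=1$: since $j_1$ is a source of $Q^\circ(C,\Omega)$, the simple module $X(\beta_1) = X(\alpha_{j_1}) = S_{j_1}$ coincides with the indecomposable injective $I_{j_1}$ of $T$. Any nonzero map $S_{j_1} \to X(\beta_l)$ is injective (as $S_{j_1}$ is simple), and then the injectivity of $S_{j_1}$ splits the embedding, forcing $X(\beta_l) \cong S_{j_1} \oplus Z$. Since $X(\beta_l)$ is indecomposable and $\beta_l \ne \alpha_{j_1}$ for $l \ge 2$, this is a contradiction.

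For the inductive step $k \to k+1$, I would apply the Dlab-Ringel reflection functor $R^-_{j_1}$ from $T$-modules to $T'$-modules, with $T' := T(C,D, s_{j_1}(\Omega))$. This functor annihilates $S_{j_1}$, restricts to a Hom-preserving equivalence on the subcategories of modules without an $S_{j_1}$-summand, and satisfies $R^-_{j_1}(X(\beta_k)) \cong X'(s_{j_1}\beta_k)$ for $k \ge 2$, where $X'$ denotes indecomposables over $T'$. The sequence $(s_{j_1}\beta_2, \ldots, s_{j_1}\beta_r, \alpha_{j_1})$ is the $(-)$-adapted enumeration of positive roots in $T'$ arising from the reduced word $(j_2, \ldots, j_r, j^*)$, with $j^*$ determined by $s_{j_1} w_0 s_{j^*} = w_0$; in it $s_{j_1}\beta_k$ sits at position $k-1$. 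The induction hypothesis applied to $T'$ at position $k-1$ therefore yields $\Hom_{T'}(X'(s_{j_1}\beta_k), X'(s_{j_1}\beta_l)) = 0$, which lifts back to the desired vanishing in $T$. The main technical point will be verifying, in the modulated-graph setting, the advertised properties of $R^-_{j_1}$ and the $(-)$-adapted character of the rotated word $(j_2, \ldots, j_r, j^*)$ for $T'$; all of these are classical consequences of the Dlab-Ringel theory of species of Dynkin type.
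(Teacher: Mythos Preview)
Your argument is correct, and it is genuinely different from the paper's proof. Both approaches first reduce to the hereditary tensor algebra $T$ via Proposition~\ref{GLS3-Prop5.5}, but they diverge thereafter. The paper computes the sign of the Euler form directly: using the reflection-invariance relation $\langle s_{j_k}(\alpha),s_{j_k}(\beta)\rangle_k=\langle\alpha,\beta\rangle_{k-1}$ for the twisted bilinear forms, it shows $\langle\beta_k,\beta_l\rangle_H\le 0$ for $k<l$ (and $\ge 0$ for $k\ge l$), and then combines the Auslander--Reiten formula with the acyclicity of the Auslander--Reiten quiver of the representation-finite hereditary algebra $T$ to rule out $\Hom_T(X(\beta_k),X(\beta_l))\neq 0$. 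Your approach instead inducts on $k$ via the Dlab--Ringel reflection functor at the source $j_1$, reducing to the base case where $X(\beta_1)=S_{j_1}$ is simple injective. The paper's route yields an extra numerical byproduct (the sign of the Euler form on ordered pairs of roots) and draws on more global structure, whereas your route is more elementary and self-contained, using only the standard properties of source reflection functors for species; one small point to note is that for the inductive step you only need the prefix $(j_2,\ldots,j_r)$ to be $(-)$-adapted for $s_{j_1}(\Omega)$, which is immediate, so the precise identification of $j^*$ is not essential to the argument.
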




\begin{proof}
For $1\le k\le r$, let us write for short
\[
\bil{-,-}_k:=\bil{-,-}_{H(C,D, s_{j_k}\cdots s_{j_1}(\Omega))}.
\]
We also put $\bil{-,-}_0:=\bil{-,-}_{H(C,D,\Omega)}$. 
It is  easy to see that we then have  
\[
\bil{s_{j_k}(\alpha), s_{j_k}(\beta)}_k=\bil{\alpha,\beta}_{k-1},\qquad 
(1\le k\le r,\ \alpha,\beta\in\Z^n). 
\]
Thus, for $k<l$ we get
\[
\bil{\beta_k,\beta_l}_H=\bil{\alpha_{j_k}, s_{j_k}\cdots s_{j_{l-1}}(\alpha_{j_l})}_{k-1}
=\bil{- s_{j_{l-1}}\cdots s_{j_{k+1}}(\alpha_{j_k}),\alpha_{j_l}}_{l-1}\leq 0.
\]
Here, the last inequality holds since $j_l$ is by definition a source for
$s_{j_{l-1}}\cdots s_{j_1}(\Omega)$, and $s_{j_{l-1}}\cdots s_{j_{k+1}}(\alpha_{j_k})$ 
is a positive root. (We agree that 
$s_{j_{l-1}}\cdots s_{j_{k+1}}(\alpha_{j_k}):=\alpha_{j_k}$ in case $l=k+1$.)
A similar argument shows that $\bil{\beta_k,\beta_l}_H\geq 0$ if $k\geq l$.

Note, that we can identify $\bil{-,-}_H$ with the Ringel
bilinear form attached to the tensor $F$-algebra $T$ of 
Section~\ref{subsec:species}. 
Recall that the indecomposable $T$-modules are denoted by 
$X(\beta)\ (\beta\in\Delta^+(C))$. 
Suppose that $\Hom_T(X(\beta_k),X(\beta_l))\neq 0$ for $k<l$. Then, by the 
Auslander-Reiten formula we get
\begin{eqnarray*}
\dim_F\Hom_T(X(\beta_l),\tau_T(X(\beta_k)))&=&\dim_F\Ext^1_T(X(\beta_k),X(\beta_l))\\
&=&\dim_F\Hom_T(X(\beta_k),X(\beta_l))-\bil{\beta_k,\beta_l}_H \\
&\geq& 0.
\end{eqnarray*}
This contradicts the fact that the Auslander-Reiten quiver of the 
representation-finite hereditary
$F$-algebra $T$ has no cycles, see for example~\cite[VIII.5]{ARS}.
Hence we have 
\[
\Hom_T(X(\beta_k),X(\beta_l))= 0 \mbox{  for  } k<l,
\]
and using Proposition~\ref{GLS3-Prop5.5},
we deduce that 
\[
\Hom_H(M(\beta_k),M(\beta_l))= 0 \mbox{  for  } k<l.
\]
\end{proof}
%

\medskip
Hence this total ordering of the roots satisfies the assumption of 
Sections~\ref{sec:filtrations} and \ref{sec:PBW},
so Theorem~\ref{thm:PBW1} shows that the root vectors of the dual PBW-basis 
$\delta_{\ba}$ of $\cM(H)^*_{\rm gr}$ 
defined by this ordering
are the delta functions $\delta_{M(\beta_k)}\ (1\le k\le r)$. 
Therefore, using the isomorphism $\upsilon : \cM(H)_{\rm gr}^* \to \C[N]$, we can
identify the polynomial functions
$\varphi_{M(\beta_k)}\ (1\le k\le r)$ with the root vectors of the dual PBW-basis 
of $\C[N]$ defined by this root ordering.

Recall the generalized minors $\Delta_{u(\varpi_k),v(\varpi_k)} \in \C[G]$ 
introduced by Fomin and Zelevinsky~\cite{FZ}. These are distinguished 
polynomial functions on $G$ defined using Gaussian decomposition.
Here $\varpi_k\ (1\le k\le n)$ denote the fundamental weights of $G$, and 
$u,v$ are elements of the Weyl group $W$.
We will denote by $D_{u(\varpi_k),v(\varpi_k)}$ the restriction of 
$\Delta_{u(\varpi_k),v(\varpi_k)}$ to $N$.

\begin{Prop}\label{Prop-8.2}
For $1\le k\le r$, we have
\[
 \varphi_{M(\beta_k)} = D_{s_{j_1}\cdots s_{j_{k-1}}(\varpi_{i_k}),\ s_{j_1}\cdots s_{j_k}(\varpi_{i_k})}.
\]
\end{Prop}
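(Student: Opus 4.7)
The plan is to identify both sides as the same root vector in the dual PBW basis of $\C[N]\cong \cM(H)^*_{\rm gr}$ attached to the reduced word $\bj$. By Lemma~\ref{lem10.1} the ordering $\beta_1,\ldots,\beta_r$ induced by $\bj$ satisfies the hypothesis of Section~\ref{sec:PBW}, so Theorem~\ref{thm:PBW1} and Corollary~\ref{cor:PBW2} identify $\varphi_{M(\beta_k)}=\upsilon(\delta_{M(\beta_k)})$ with the unique homogeneous element of $\C[N]$ of weight $\beta_k$ satisfying $\langle\varphi_{M(\beta_k)},\theta_{\ba}\rangle = \delta_{\ba,e_k}$ for all $\ba\in\N^r$, where $e_k$ is the $k$th standard basis vector of $\Z^r$. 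It therefore suffices to check that the minor on the right-hand side (read with $\varpi_{j_k}$ in place of the presumably misprinted $\varpi_{i_k}$) satisfies the same characterisation.

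First I would verify the weight. Standard transformation laws for generalised minors under left and right $T$-translation yield that $D_{u\varpi_j,v\varpi_j}$ has $T$-weight $(u-v)\varpi_j$ under conjugation on $N$. With $u=s_{j_1}\cdots s_{j_{k-1}}$ and $v=s_{j_1}\cdots s_{j_k}$ one computes
\[
(u-v)\varpi_{j_k}=s_{j_1}\cdots s_{j_{k-1}}(\varpi_{j_k}-s_{j_k}\varpi_{j_k})=s_{j_1}\cdots s_{j_{k-1}}(\alpha_{j_k})=\beta_k,
\]
matching the weight of $\varphi_{M(\beta_k)}$.

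The main step is to evaluate the pairing $\langle D,\theta_{\ba}\rangle$, where $D$ denotes the minor on the right-hand side. I would realise $D$ as a matrix coefficient of the fundamental representation $V(\varpi_{j_k})$ between the extremal vector $v_{v\varpi_{j_k}}$ of weight $v\varpi_{j_k}$ and the linear form on $V(\varpi_{j_k})$ dual to $v_{u\varpi_{j_k}}$, so that $\langle D,\theta_{\ba}\rangle$ equals the coefficient of $v_{u\varpi_{j_k}}$ in $\theta_{\ba}\cdot v_{v\varpi_{j_k}}$. A weight count immediately restricts attention to $\ba$ with $\sum_l a_l\beta_l=\beta_k$.

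The main obstacle is to rule out every such $\ba\neq e_k$. The $(-)$-adaptedness of $\bj$ makes the sequence of extremal weights $s_{j_1}\cdots s_{j_l}\varpi_{j_k}$ (for $l=0,\ldots,k$) a strictly decreasing ``weight chain'' in $V(\varpi_{j_k})$ whose successive differences are positive roots, and the last step from $v\varpi_{j_k}$ to $u\varpi_{j_k}$ is precisely $\beta_k$. The only ordered product of PBW root vectors that moves $v_{v\varpi_{j_k}}$ by raising the weight by exactly $\beta_k$ while landing on the extremal line $\C v_{u\varpi_{j_k}}$ is $\theta_{\beta_k}$ itself; any other PBW monomial of weight $\beta_k$ pushes the vector into a non-extremal weight space of $V(\varpi_{j_k})$, which has zero overlap with $\C v_{u\varpi_{j_k}}$. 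The normalisation $\langle D,\theta_{\beta_k}\rangle=1$ then follows from the classical $SL_2$-computation of $\Delta_{\varpi,s\varpi}(x(t))=t$ inside the fundamental subgroup associated with $\alpha_{j_k}$.
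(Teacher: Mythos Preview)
Your overall strategy coincides with the paper's: both arguments reduce to identifying each side with the $k$th root vector of the dual PBW basis of $\C[N]$ attached to the reduced word $\bj$. The paper, however, does not carry out the second identification directly; it simply quotes \cite[Proposition~7.4]{GLS}, which asserts that the dual PBW root vectors of $\C[N]$ for a given reduced word are precisely these generalized minors. So your proposal is not a different route, but an attempt to reprove the cited proposition in place.

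That attempt has a genuine gap. Your key sentence, ``any other PBW monomial of weight $\beta_k$ pushes the vector into a non-extremal weight space of $V(\varpi_{j_k})$, which has zero overlap with $\C v_{u\varpi_{j_k}}$'', is false as written: if $\theta_\ba$ has total weight $\beta_k$, then $\theta_\ba\cdot v_{v\varpi_{j_k}}$ lies in the weight space of weight $v\varpi_{j_k}+\beta_k = u\varpi_{j_k}$, which \emph{is} extremal and one-dimensional. So nothing you said excludes a nonzero overlap. What is actually needed is a two-step argument. First, for $l>k$ one has $v^{-1}\beta_l\in\Delta^+$, hence $\|v\varpi_{j_k}+\beta_l\|>\|\varpi_{j_k}\|$, so $v\varpi_{j_k}+\beta_l$ is not a weight of $V(\varpi_{j_k})$ and $e_{\beta_l}\cdot v_{v\varpi_{j_k}}=0$; since in the ordered product the factors $e_{\beta_l}$ with $l>k$ act first, any $\ba$ with $a_l\neq 0$ for some $l>k$ gives zero. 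Second, if $\ba$ is supported on $\{1,\ldots,k-1\}$, then $u^{-1}\bigl(\sum_{l<k}a_l\beta_l\bigr)$ is a nonnegative combination of negative roots, while $u^{-1}\beta_k=\alpha_{j_k}>0$, so $\sum_{l<k}a_l\beta_l=\beta_k$ is impossible. Together with your $SL_2$ normalisation this would complete the verification; but as it stands your ``non-extremal'' claim does not do the work, and you should either supply this argument or, as the paper does, cite \cite[Proposition~7.4]{GLS}.
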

\begin{proof}
This follows immediately from \cite[Proposition 7.4]{GLS}.
\end{proof}

Combining Proposition~\ref{Prop-8.2} with Proposition~\ref{Prop4-2} we obtain:
\begin{Cor}\label{cor-F-pol}
Let $\bi=(i_1,\ldots,i_n)$ be a $(+)$-admissible sequence, and let 
$\bj=(j_1,\ldots,j_r)$ be a $(-)$-adapted
reduced word for $w_0$. 
For every $1\le k\le r$ we have
\[
 F_{M(\beta_k)}(t_1,\ldots,t_n) = 
 D_{s_{j_1}\cdots s_{j_{k-1}}(\varpi_{j_k}),\ s_{j_1}\cdots s_{j_k}
 (\varpi_{j_k})}(x_{i_1}(t_1)\cdots x_{i_n}(t_n)x_{i_1}(1)\cdots x_{i_n}(1)).
\] 
\end{Cor}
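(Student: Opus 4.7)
My plan is that this corollary is essentially a one-line substitution, with no substantive obstacle. The two ingredients are already established in the excerpt: Proposition~\ref{Prop4-2} gives a universal formula expressing the $F$-polynomial of any locally free $H$-module $M$ as the evaluation of the function $\varphi_M \in \C[N]$ at the specific element
\[
 x_{i_1}(t_1)\cdots x_{i_n}(t_n)\,x_{i_1}(1)\cdots x_{i_n}(1)
\]
determined by the fixed $(+)$-admissible sequence $\bi$, while Proposition~\ref{Prop-8.2} identifies $\varphi_{M(\beta_k)}$ explicitly with the restriction $D_{s_{j_1}\cdots s_{j_{k-1}}(\varpi_{j_k}),\ s_{j_1}\cdots s_{j_k}(\varpi_{j_k})}$ of a generalized minor on $G$ to $N$, using the chosen $(-)$-adapted reduced word $\bj$.

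Concretely I would proceed as follows. First I would specialize Proposition~\ref{Prop4-2} to the locally free module $M=M(\beta_k)$, obtaining
\[
 F_{M(\beta_k)}(t_1,\ldots,t_n) = \varphi_{M(\beta_k)}\bigl(x_{i_1}(t_1)\cdots x_{i_n}(t_n)\,x_{i_1}(1)\cdots x_{i_n}(1)\bigr).
\]
Then I would invoke Proposition~\ref{Prop-8.2}, which is legitimately applicable because the ordering of positive roots coming from the $(-)$-adapted reduced word $\bj$ satisfies the $\Hom$-vanishing hypothesis of Sections~\ref{sec:filtrations} and \ref{sec:PBW} (this is precisely the content of Lemma~\ref{lem10.1}), so that the PBW and dual PBW structures of Theorem~\ref{thm:PBW1} and Corollary~\ref{cor:PBW2} give the required identification of $\varphi_{M(\beta_k)}$ with a minor. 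Substituting this identification into the previous displayed equation yields the claimed formula.

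The only point that requires any attention is that the $(+)$-admissible sequence $\bi$ (of length $n$) and the $(-)$-adapted reduced word $\bj$ (of length $r$) are chosen independently of one another: Proposition~\ref{Prop4-2} is responsible for the argument of the minor and depends only on $\bi$, while Proposition~\ref{Prop-8.2} is responsible for the shape of the minor itself and depends only on $\bj$. There is no compatibility requirement between these two choices, so the combination is immediate and no further argument is needed.
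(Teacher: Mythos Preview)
Your proposal is correct and matches the paper's own approach, which simply states that the corollary follows by combining Proposition~\ref{Prop-8.2} with Proposition~\ref{Prop4-2}. Your additional remark that Lemma~\ref{lem10.1} is what makes Proposition~\ref{Prop-8.2} applicable to the $(-)$-adapted ordering is accurate and implicit in the paper's discussion preceding Proposition~\ref{Prop-8.2}.
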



\section{Comparison with \cite{YZ}} \label{sec:compare}

In this section we will deduce (\ref{eq-3.3}) from the results of \cite{YZ}.
By Corollary~\ref{cor-sym}, it is sufficient to consider the case when $C$ is 
\emph{not} symmetric, that is, of type $B_n$, $C_n$, $F_4$ or $G_2$.

Let $\bi=(i_1,\ldots,i_n)$ be a $(+)$-admissible sequence. Then 
$c=s_{i_1}\cdots s_{i_n}$
is a Coxeter element. Let $h$ denote the Coxeter number. 
Since $C$ is not symmetric, $h$ is even and $r = nh/2$. Moreover 
$w_0 = (c^{-1})^{h/2}$~\cite[Chapter 5, \S 6, Cor. 3]{B}.
Hence 
\[
\bj=\underbrace{(i_n,\ldots,i_1,i_n,\ldots,i_1,\ldots,i_n,\ldots,i_1)}_{\text{$r$}}
\]
is a $(-)$-adapted reduced word for $w_0$. From now on, we fix $\bi$ and $\bj$ 
as above.

\begin{Lem}\label{lem:10.1}
For $1\le k\le r$, we write $k = un+v$ with $0\le u < h/2$ and $1\le v\le n$. 
Then
\[
 F_{M(\beta_k)}(t_1,\ldots,t_n) = 
 D_{c^{-u}(\varpi_{i_{n-v+1}}),\ c^{-u-1}(\varpi_{i_{n-v+1}})}
 (x_{i_1}(t_1)\cdots x_{i_n}(t_n)x_{i_1}(1)\cdots x_{i_n}(1)).
\]
\end{Lem}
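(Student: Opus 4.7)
The plan is to obtain the lemma as a direct bookkeeping specialization of Corollary~\ref{cor-F-pol} applied to the $(-)$-adapted reduced word $\bj=(i_n,\ldots,i_1)^{h/2}$ exhibited just before the statement. All that must be verified is the pair of identities
\[
s_{j_1}\cdots s_{j_{k-1}}(\vpi_{j_k})=c^{-u}(\vpi_{i_{n-v+1}}),\qquad
s_{j_1}\cdots s_{j_k}(\vpi_{j_k})=c^{-u-1}(\vpi_{i_{n-v+1}}),
\]
after which direct substitution into Corollary~\ref{cor-F-pol} yields the claimed formula.

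First I would identify $j_k$: grouping $\bj$ into $h/2$ consecutive blocks of length $n$, each of the form $(i_n,i_{n-1},\ldots,i_1)$, position $k=un+v$ lies in the $(u+1)$-st block and is its $v$-th entry, so $j_k=i_{n-v+1}$, which already matches the subscript of $\vpi$ on the right-hand side. Using $c^{-1}=s_{i_n}\cdots s_{i_1}$, each complete block contributes one factor of $c^{-1}$, giving
\[
s_{j_1}\cdots s_{j_{k-1}}=c^{-u}\,s_{i_n}s_{i_{n-1}}\cdots s_{i_{n-v+2}},\qquad
s_{j_1}\cdots s_{j_k}=c^{-u}\,s_{i_n}s_{i_{n-1}}\cdots s_{i_{n-v+1}},
\]
where the extra trailing product in the first identity is empty when $v=1$.

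Next I would exploit the elementary fact that a simple reflection $s_\ell$ fixes every fundamental weight except $\vpi_\ell$. Since $i_1,\ldots,i_n$ is a permutation of $\{1,\ldots,n\}$, the indices $i_n,\ldots,i_{n-v+2}$ are all distinct from $i_{n-v+1}$, so $s_{i_n}\cdots s_{i_{n-v+2}}(\vpi_{i_{n-v+1}})=\vpi_{i_{n-v+1}}$, which gives the first identity. For the second, I would split $c^{-1}=(s_{i_n}\cdots s_{i_{n-v+1}})(s_{i_{n-v}}\cdots s_{i_1})$ and observe by the same index argument that the rightmost factor fixes $\vpi_{i_{n-v+1}}$, so $c^{-1}(\vpi_{i_{n-v+1}})=s_{i_n}\cdots s_{i_{n-v+1}}(\vpi_{i_{n-v+1}})$, yielding the second identity. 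The argument is purely combinatorial; the only point requiring a moment's care is to read each block $s_{i_n}\cdots s_{i_1}$ from left to right, so that it represents $c^{-1}$ rather than $c$, and I would not expect any substantive obstacle beyond that.
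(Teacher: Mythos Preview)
Your proposal is correct and follows essentially the same approach as the paper: identify $j_k=i_{n-v+1}$ from the block structure of $\bj$, use that $s_i(\vpi_j)=\vpi_j$ for $i\neq j$ to simplify the partial products applied to $\vpi_{j_k}$, and then invoke Corollary~\ref{cor-F-pol}. The paper's proof is terser but rests on exactly the same two identities you establish.
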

\begin{proof}
Clearly, $j_k = i_{n-v+1}$. Moreover, for $1\le i,j \le n$ we have 
\[
 s_i(\varpi_j) = \varpi_j - \delta_{ij}\alpha_i.
\]
In particular, if $j\not = i$ then $s_i(\varpi_j) = \varpi_j$. 
Hence $s_{j_1}\cdots s_{j_{k-1}}(\varpi_{j_k}) = c^{-u}(\varpi_{i_{n-v+1}})$ and 
$s_{j_1}\cdots s_{j_{k}}(\varpi_{j_k}) = c^{-u-1}(\varpi_{i_{n-v+1}})$.
The lemma follows then immediately from Corollary~\ref{cor-F-pol}.
\end{proof}

Let us now quote from \cite{YZ} a formula for the $F$-polynomials $F_\beta$ of 
the cluster variables $x(\beta)$ of $\cA$.
Let $\g = \n \oplus \h \oplus \n_-$ be the triangular decomposition of $\g$, 
and let $f_i\ (1\le i\le n)$ 
denote the Chevalley generators of $\n_-$ corresponding to the negative simple 
roots $-\alpha_i$. Let
\[
 y_i(t) = \exp(tf_i),\qquad (1\le i\le n,\ t\in\C)
\]
be the corresponding one-parameter subgroups in $G$.

\begin{Thm}[{\cite[Theorem~1.12]{YZ}}]\label{thm:YZ}
Every $\beta\in\Delta^+(C)$ can be written 
\[
\beta = c^{j-1}(\varpi_i) - c^j(\varpi_i)
\] 
for a unique fundamental weight $\varpi_i$ and $1\le j \le h/2$. Then
\[
 F_\beta(t_1,\ldots,t_n) = \Delta_{c^{j}(\varpi_i),\,c^{j}(\varpi_i)}
 (y_{i_1}(1)\cdots y_{i_n}(1)x_{i_n}(t_n)\cdots x_{i_1}(t_1)).
\]
\end{Thm}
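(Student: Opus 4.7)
The plan is to derive Theorem~\ref{thm:YZ} by combining the principal-coefficient machinery of \cite{FZ4} with the Berenstein-Fomin-Zelevinsky cluster structure on double Bruhat cells of $G$.

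First, I would establish the parametrization of $\Delta^+(C)$ by pairs $(i,j)$ with $1 \le i \le n$ and $1 \le j \le h/2$. Since we are in the acyclic finite type setting with $h$ even and $w_0 = c^{-h/2}$, a standard Coxeter element argument as in \cite[Chapter 5, \S 6]{B} shows that for each $i$ the sequence $\varpi_i, c^{-1}(\varpi_i), \ldots, c^{-h/2}(\varpi_i)$ produces $h/2$ distinct positive roots as consecutive differences $c^{j-1}(\varpi_i) - c^j(\varpi_i)$; since $n \cdot h/2 = r$ and these differences are pairwise distinct across all $i$, this yields the claimed bijection.

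Next, I would reduce to the principal-coefficient cluster algebra $\AA^{\rm pr}$. By \cite{FZ4}, the $F$-polynomials $F_\beta$ are intrinsic to the cluster type and can be computed inside $\AA^{\rm pr}$. By Berenstein-Fomin-Zelevinsky, $\AA^{\rm pr}$ is realized as the coordinate ring of a suitable double Bruhat cell of $G$, and the cluster variable $x(\beta)$ with $\beta = c^{j-1}(\varpi_i) - c^j(\varpi_i)$ corresponds to the generalized minor $\Delta_{c^j(\varpi_i),\, c^j(\varpi_i)}$; this can be checked inductively via mutations by matching the exchange relations against the generalized Pl\"ucker identities of \cite{FZ}. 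To extract the $F$-polynomial one then specializes the coefficient variables to $1$ and the initial cluster variables to a product of $\hat{y}$-monomials, which under the group-theoretic realization amounts to evaluating the minor at $y_{i_1}(1) \cdots y_{i_n}(1) \cdot x_{i_n}(t_n) \cdots x_{i_1}(t_1)$: the $y$-factors implement the coefficient specialization via Gaussian decomposition and the $x$-factors inject the variables $t_i$.

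The hard part will be the identification of $x(\beta)$ with $\Delta_{c^j(\varpi_i),\, c^j(\varpi_i)}$. The appearance of the \emph{same} Weyl group element on both sides of the minor reflects the ``$c$-adapted'' choice of initial seed made via the $(+)$-admissible sequence $\bi$, and verifying this identity demands careful bookkeeping of the BFZ cluster structure on the relevant double Bruhat cell together with precise tracking of how the mutation sequence associated with a reduced word for $w_0$ passes one through the successive minors $\Delta_{c^{j-1}(\varpi_i),\, c^{j-1}(\varpi_i)} \to \Delta_{c^j(\varpi_i),\, c^j(\varpi_i)}$.
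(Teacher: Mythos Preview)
The paper does not prove this statement: it is quoted as a black box from \cite[Theorem~1.12]{YZ} (the text introducing it reads ``Let us now quote from \cite{YZ} a formula for the $F$-polynomials\ldots''), and is then used as an input in the proof of Theorem~\ref{Thm:11.3}. So there is no proof in the paper to compare your proposal against.

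For what it is worth, your sketch is broadly aligned with the actual argument in \cite{YZ}. The parametrization of $\Delta^+(C)$ by pairs $(i,j)$ is indeed a Coxeter-element computation of the kind you indicate. The core of \cite{YZ} is precisely the identification of the non-initial cluster variables of the principal-coefficient cluster algebra with the principal generalized minors $\Delta_{c^j(\varpi_i),\,c^j(\varpi_i)}$ on a double Bruhat cell (of type $G^{c,c^{-1}}$), proved by induction on mutations matched against the generalized determinantal identities of \cite{FZ}; and the $F$-polynomial is then extracted by the evaluation you describe. Your outline captures the right ingredients, though of course the ``hard part'' you flag is essentially the entire content of \cite{YZ} and would require substantial work to carry out in detail.
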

Note that in \cite{YZ}, the principal minors $\Delta_{\varpi_i,\varpi_i}$ 
correspond to the initial cluster
variables $u_1,\ldots,u_n$.
We can now show
\begin{Thm}\label{Thm:11.3}
For $\beta\in\Delta^+(C)$ we have $F_{M(\beta)} = F_\beta$. 
\end{Thm}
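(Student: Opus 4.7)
By Corollary~\ref{cor-sym} we may assume $C$ is non-symmetric, so $w_0 = -\mathrm{id}$ on the weight lattice and $c^{h/2} = w_0$. For $\beta = \beta_k$ with $k = un+v$, put $m = i_{n-v+1}$. The task is to equate, for $\beta = c^{-u}\varpi_m - c^{-u-1}\varpi_m$ and the unique YZ data $(i,j)$ with $\beta = c^{j-1}\varpi_i - c^j\varpi_i$, the two expressions
\[
F_{M(\beta_k)}(t) = D_{c^{-u}\varpi_m,\, c^{-u-1}\varpi_m}(\eta), \qquad F_\beta(t) = \Delta_{c^j\varpi_i,\, c^j\varpi_i}(\mu),
\]
where $\eta = x_{i_1}(t_1)\cdots x_{i_n}(t_n)\,x_{i_1}(1)\cdots x_{i_n}(1) \in N$ comes from Lemma~\ref{lem:10.1}, and $\mu = y_{i_1}(1)\cdots y_{i_n}(1)\, x_{i_n}(t_n)\cdots x_{i_1}(t_1)$ from Theorem~\ref{thm:YZ}. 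Note that the two pairs of weight indices generally differ as points of the $\langle c\rangle$-orbit of $\varpi_m$, so the identity is nontrivial.

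The plan is to bring $\mu$ into the form of $\eta$, up to left multiplication by $N_-$ and right multiplication by $N$ (which preserve $\Delta_{\varpi_m,\varpi_m}$), by successively moving the $y$-factors in $\mu$ past the $x$-factors. The relevant identities in $G$ are the Chevalley commutation $y_i(s)\,x_j(t) = x_j(t)\,y_i(s)$ for $i \neq j$ (coming from $[e_j, f_i] = 0$), and for $i = j$ the $SL_2$-identity
\[
y_i(1)\,x_i(t) \;=\; x_i\!\bigl(\tfrac{t}{1+t}\bigr)\,\alpha_i^\vee(1+t)^{-1}\,y_i\!\bigl(\tfrac{1}{1+t}\bigr).
\]
Coupled with the standard reduction $\Delta_{u\varpi_i, v\varpi_i}(g) = \Delta_{\varpi_i,\varpi_i}(\bar u^{-1} g\, \bar v)$ and the $(N_-, N)$-invariance of $\Delta_{\varpi_i,\varpi_i}$, iterating these commutations rewrites $\mu$ modulo the invariances as a product of an element of $N$ with a torus factor. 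Combined with the Weyl lifts $\bar c^{\pm j}$ produced by the reduction, the torus and $N$ parts should reassemble to the element $\eta$ and the lifts $\bar c^{u},\bar c^{-u-1}$ on the other side.

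The main obstacle is twofold. First, the matching of parametrizations $(u,m) \leftrightarrow (i,j)$: although $c^{-u}\varpi_m - c^{-u-1}\varpi_m = c^{j-1}\varpi_i - c^j\varpi_i$, the pairs of weights themselves differ by a power of $c^{h/2} = w_0$, which must be tracked through the Weyl lifts and the sign $w_0 = -\mathrm{id}$. Second, the iterated $SL_2$-identity rescales each parameter $t_l \mapsto t_l/(1+t_l)$ and produces cascading torus factors that conjugate the remaining $x$- and $y$-factors in a way that must combine cleanly. The key structural input that makes the bookkeeping work is the block-form $\bj = (i_n,\ldots,i_1)^{h/2}$ of the reduced expression for $w_0$: each simple-root index appears exactly once per block, and over $h/2$ iterations these rescalings produce precisely the ``doubled'' product $(x_{i_1}(t_1)\cdots x_{i_n}(t_n))\cdot(x_{i_1}(1)\cdots x_{i_n}(1))$ appearing in $\eta$.
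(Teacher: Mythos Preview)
Your outline has a real gap. The $SL_2$-identity
\[
y_i(1)\,x_i(t)=x_i\!\bigl(\tfrac{t}{1+t}\bigr)\alpha_i^\vee(1+t)^{-1}y_i\!\bigl(\tfrac{1}{1+t}\bigr)
\]
sends one $x$-factor to one $x$-factor, with a rescaled parameter and a torus/$N_-$ tail. After commuting all $n$ of the $y$'s in $\mu$ to the right you obtain an element of the form
\[
x_{i_n}(f_n(t))\cdots x_{i_1}(f_1(t))\cdot(\text{torus})\cdot(\text{element of }N_-),
\]
still with only $n$ $x$-factors and still in the reversed order $i_n,\ldots,i_1$. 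There is no mechanism here to manufacture the second block $x_{i_1}(1)\cdots x_{i_n}(1)$ of $\eta$, nor to reverse the order of the $t$-dependent block. Your closing claim that ``over $h/2$ iterations these rescalings produce precisely the doubled product'' is not right: the block-form of $\bj$ governs the labelling of the $\beta_k$'s, but $\mu$ itself contains only one $(i_1,\ldots,i_n)$ block of $y$'s and one of $x$'s, so there is nothing to iterate $h/2$ times.

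The paper's proof proceeds differently on both fronts. First, instead of moving $y$'s through $x$'s in $\mu$, it applies the involutive antiautomorphism $\iota$ of $G$ to rewrite the Yang--Zelevinsky formula: this simultaneously reverses the order of the $x(t)$-factors to $x_{i_1}(t_1)\cdots x_{i_n}(t_n)$ and replaces $(c^j\varpi_i,c^j\varpi_i)$ by $(c^{-u}\varpi_k,c^{-u}\varpi_k)$ via $-c^j\varpi_i=c^{-h/2+j}\varpi_{i^*}$, resolving your first ``obstacle'' in one stroke. Second, to match $\eta$ with the resulting element $\bz(t)=x_{i_1}(t_1)\cdots x_{i_n}(t_n)y_{i_n}(1)\cdots y_{i_1}(1)$, it uses the identity $x_i(1)\overline{s_i}=y_i(1)x_i(-1)$ (not the Gauss decomposition) to show that
\[
x_{i_1}(1)\cdots x_{i_n}(1)\,\overline{s_{i_n}}\cdots\overline{s_{i_1}}=y_{i_n}(1)\cdots y_{i_1}(1)\cdot\ba
\]
with $\ba\in N$; right $N$-invariance of $\Delta_{\varpi_k,\varpi_k}$ then gives equality for $u=0$, and the iteration that actually occurs is conjugation by $\overline{c}$, checking at each of the $h/2$ steps that the relevant roots $c^u(\gamma_l)$ remain positive so the conjugated $\ba$ stays in $N$. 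The Weyl lifts are absorbed into passing from $\Delta_{\varpi_k,\varpi_k}$ to $\Delta_{c^{-u}\varpi_k,c^{-u}\varpi_k}$.
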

 
\begin{proof}
We first slightly rewrite the formula of Theorem~\ref{thm:YZ}. 
Following \cite[Section 3]{YZ}, let $\iota$ denote the involutive 
antiautomorphism of $G$ such that 
\[
 x_i(t)^{\iota} = x_i(t),\qquad y_i(t)^\iota = y_i(t).
\]
It satisfies \cite[(3.7)]{YZ}
\[
 \Delta_{\gamma,\delta}(x) = \Delta_{-\delta,-\gamma}(x^\iota)
\]
for any generalized minor $\Delta_{\gamma,\delta}$, and any $x\in G$.
Also, denoting by $i\mapsto i^*$ the involution on $\{1,\ldots,n\}$ defined
by $\varpi_{i^*} = -w_0(\varpi_i)$, we have
\[
 -c^{j}(\varpi_i) = c^{-h/2+j}(\varpi_{i^*}),\qquad (1\le j \le h/2).
\]
Hence, Theorem~\ref{thm:YZ} can be restated as
\[
F_\beta(t_1,\ldots,t_n) = \Delta_{c^{-h/2+j}(\varpi_{i^*}),\,c^{-h/2+j}(\varpi_{i^*})}
(x_{i_1}(t_1)\cdots x_{i_n}(t_n)y_{i_n}(1)\cdots y_{i_1}(1)),
\]
where $\beta = c^{-h/2+j}(\varpi_{i^*}) - c^{-h/2+j-1}(\varpi_{i^*})$.
In other words, we can write 
\[
\beta = c^{-u}(\varpi_k)-c^{-u-1}(\varpi_k)
\]
for a unique $k\in\{1,\ldots,n\}$ and $0\le u <h/2$, and we have
\begin{equation}\label{eq:10.1}
F_\beta(t_1,\ldots,t_n) = \Delta_{c^{-u}(\varpi_{k}),\,c^{-u}(\varpi_{k})}
(x_{i_1}(t_1)\cdots x_{i_n}(t_n)y_{i_n}(1)\cdots y_{i_1}(1)). 
\end{equation}

On the other hand, by Lemma~\ref{lem:10.1} and \cite[(3.2)]{YZ}, we have 
\begin{equation}\label{eq:10.2}
F_{M(\beta)}(t_1,\ldots,t_n) = \Delta_{c^{-u}(\varpi_{k}),\ c^{-u}(\varpi_{k})}
(x_{i_1}(t_1)\cdots x_{i_n}(t_n)x_{i_1}(1)\cdots x_{i_n}(1)\overline{s_{i_n}}\cdots\overline{s_{i_1}}).
\end{equation}
Recall the following commutation relations in $G$. By \cite[(2.13)]{FZ}, we have
\[
 x_i(1)\overline{s_i} = y_i(1)x_i(-1),\qquad (1\le i\le n).
\]
Also, denoting by $x_\beta(t)$ the one-parameter subgroup of $N$ attached to 
$\beta \in \Delta^+(C)$
(so that $x_i(t) = x_{\alpha_i}(t)$), we have that, if $s_i(\beta) \in \Delta^+(C)$ then
\[
 x_\beta(t)\overline{s_i} = \overline{s_i}x_{s_i(\beta)}(t')
\]
for some $t'\in\C$. Finally, we also have
\[
x_i(t)y_j(t) = y_j(t)x_i(t)\quad\mbox{for $i\not = j$.} 
\]
Using these relations we get
\begin{eqnarray*}
x_{i_1}(1)\cdots x_{i_n}(1)\overline{s_{i_n}}\cdots\overline{s_{i_1}} 
&=&
x_{i_1}(1)\cdots x_{i_{n-1}}(1)y_{i_n}(1)x_{i_n}(-1)\overline{s_{i_{n-1}}}\cdots\overline{s_{i_1}}
\\
&=&
y_{i_n}(1)x_{i_1}(1)\cdots x_{i_{n-1}}(1)\overline{s_{i_{n-1}}}
\cdots\overline{s_{i_1}}x_{s_{i_1}\cdots s_{i_{n-1}}(\alpha_{i_n})}(w_1),
\end{eqnarray*}
for some $w_1\in\C$. Iterating this procedure, we obtain
\[
x_{i_1}(1)\cdots x_{i_n}(1)\overline{s_{i_n}}\cdots\overline{s_{i_1}}
=
y_{i_n}(1)\cdots y_{i_1}(1)
x_{\alpha_{i_1}}(w_n)x_{s_{i_1}(\alpha_{i_2})}(w_{n-1})
\cdots
x_{s_{i_1}\cdots s_{i_{n-1}}(\alpha_{i_n})}(w_1),
\]
for some $w_1,\ldots, w_n \in\C$.
So, writing 
\begin{eqnarray*}
\bx(t)& =& x_{i_1}(t_1)\cdots x_{i_n}(t_n)x_{i_1}(1)\cdots x_{i_n}(1)\overline{s_{i_n}}\cdots\overline{s_{i_1}},
\\
\bz(t) &=& x_{i_1}(t_1)\cdots x_{i_n}(t_n)y_{i_n}(1)\cdots y_{i_1}(1),
\\
\ba &= & x_{\alpha_{i_1}}(w_n)x_{s_{i_1}(\alpha_{i_2})}(w_{n-1})
\cdots
x_{s_{i_1}\cdots s_{i_{n-1}}(\alpha_{i_n})}(w_1),
\end{eqnarray*}
we see that $\bx(t) = \bz(t)\ba$, with $\ba\in N$. Hence it follows from the 
definition of the generalized minors that
\[
 \Delta_{\varpi_k,\varpi_k}(\bx(t)) = \Delta_{\varpi_k,\varpi_k}(\bz(t)),\qquad (1\le k\le n).
\]
Moreover, writing 
$\gamma_k = s_{i_1}\cdots s_{i_{k-1}}(\alpha_{i_k})\ (1\le k\le n)$, we have 
\[
\ba\, \overline{s_{i_n}}\cdots\overline{s_{i_1}} = \overline{s_{i_n}}\cdots\overline{s_{i_1}}\,
x_{c(\gamma_1)}(w'_n)x_{c(\gamma_2)}(w'_{n-1})
\cdots
x_{c(\gamma_n)}(w'_1),
\]
for some $w'_1,\ldots, w'_n \in\C$. Hence, writing 
$\bb = x_{c(\gamma_1)}(w'_n)x_{c(\gamma_2)}(w'_{n-1})\cdots x_{c(\gamma_n)}(w'_1)$, 
we have that 
\begin{eqnarray*}
\overline{s_{i_1}}^{-1}\cdots\overline{s_{i_n}}^{-1}\bx(t) \overline{s_{i_n}}\cdots\overline{s_{i_1}}
&=&
\overline{s_{i_1}}^{-1}\cdots\overline{s_{i_n}}^{-1}\bz(t)\ba \overline{s_{i_n}}\cdots\overline{s_{i_1}}
\\
&=& \overline{s_{i_1}}^{-1}\cdots\overline{s_{i_n}}^{-1}\bz(t)\overline{s_{i_n}}\cdots\overline{s_{i_1}}\bb,
\end{eqnarray*}
with $\bb\in N$ (because $c(\gamma_1),\ldots,c(\gamma_n)$ are positive roots). 
Again, using the definition of the generalized minors, this implies that
\[
 \Delta_{c^{-1}(\varpi_k),c^{-1}(\varpi_k)}(\bx(t)) = \Delta_{c^{-1}(\varpi_k),c^{-1}(\varpi_k)}(\bz(t)),\qquad (1\le k\le n).
\]
Since for $0\le u < h/2$ we have $c^{u}(\gamma_k) \in \Delta^+(C)$ for all 
$1\le k\le n$, we can iterate
this procedure and show that 
\[
 \Delta_{c^{-u}(\varpi_k),c^{-u}(\varpi_k)}(\bx(t)) = \Delta_{c^{-u}(\varpi_k),c^{-u}(\varpi_k)}(\bz(t)),
 \qquad (1\le k\le n,\ 0\le u < h/2).
\]
So we have proved that the right-hand sides of~\eqref{eq:10.1} 
and~\eqref{eq:10.2} are equal.
This finishes the proof.
\end{proof}

Putting together Proposition~\ref{prop:g-vector} and Theorem~\ref{Thm:11.3}, 
we have now a complete proof of
Theorem~\ref{Thm1}~(c). Moreover, using Corollary~\ref{cor-poly}, this 
finishes also the proof of Theorem~\ref{Thm1}~(a).


\section{Proof of Theorem~\ref{Thm1} {\rm(d)}}

\label{sec:thm1d}

In \cite{FZ2}, when the orientation 
$\Omega$ is such that every vertex of $Q^\circ(C,\Omega)$ is either a source or 
a sink,
the clusters of $\AA$ were described by means of the compatibility degree 
$(\alpha\|\beta)$ of positive roots $\alpha, \beta \in \Delta^+(C)$. 
When $C$ is symmetric, this was extended in \cite{MRZ} to all orientations 
$\Omega$ by introducing
an $\Omega$-compatibility degree $(\alpha\|\beta)_\Omega$ using decorated 
representations
of the Dynkin quiver $Q^\circ(C,\Omega)$. This description was eventually 
extended by Zhu \cite{Z} to all Cartan
matrices $C$ using the cluster category of a modulated graph of the same type 
as $C$ and orientation $\Omega$.

Recall from Section~\ref{subsec:species} the tensor $F$-algebra $T$ attached 
to such a modulated graph,
and 
its indecomposable modules $X(\gamma)$
with dimension vectors $\gamma\in \Delta^+(C)$.
It follows from \cite{Z} that a subset of cluster variables 
$\{x(\gamma_1),\ldots,x(\gamma_k)\}$ of $\AA$ 
(other than $u_1,\ldots , u_n$) is $\Omega$-compatible if and only if for 
every $i,j$ we have 
$\Ext^1_T(X(\gamma_i),X(\gamma_j)) = 0$.

Therefore Theorem~\ref{Thm1}~(d) is now an immediate consequence of 
Theorem~\ref{Thm1}~(c) and
Proposition~\ref{GLS3-Prop5.5}. This finishes the proof of Theorem~\ref{Thm1}.


\section{Examples} \label{sec:examples}

\subsection{Type $B_2$} \label{exB2}

We use the same notation as in \cite[\S13.6]{GLS1}.
Thus $H = H(C,D,\Omega)$ is given by the quiver
\[
\xymatrix{
1 \ar@(ul,ur)^{\vep_1} & 2 \ar[l]^{\alpha_{12}}
}
\]
with unique relation $\vep_1^2 = 0$.  
The skew symmetrizable matrix $B$ is given by
\[
B=
\left(
 \begin{matrix}
  0& 1\\
  -2 & 0
 \end{matrix} 
\right).
 \]
Hence, using the notation of Section~\ref{sect-Fpoly-gvect}, we have 
$z_1 = u_2^{-2}$ and $z_2 = u_1$.

(a) For $M=M(\alpha_1) = E_1$, we have $\Gr_\lf(\br,M)\not = \emptyset$ only if 
$\br=(0,0)$ or $\br=(1,0)$. 
Clearly in both cases $\Gr_\lf(\br,M)$ is just a point, so
$\chi(\Gr_\lf(\br,M))=1$. Hence we have $F_{M}(t_1,t_2) = 1 + t_1$.
Moreover the injective coresolution of $E_1$ is 
$0 \to E_1 \to I_1 \to I_2^2 \to 0$.
Hence $g_{M} = (-1,2)$, and 
\[
 X_{M(\alpha_1)} = X_{E_1} = u_1^{-1}u_2^2(1+z_1)  =\frac{u_2^2+1}{u_1}.
\]

(b) For $M=M(\alpha_2)=E_2=S_2=I_2$, we have $\Gr_\lf(\br,M)\not = \emptyset$ 
only if $\br=(0,0)$ or $\br=(0,1)$. 
Again in both cases 
$\chi(\Gr_\lf(\br,M))=1$. So $F_{M}(t_1,t_2) = 1 + t_2$.
Moreover $M=I_2$ is injective, so $g_{M} = (0,-1)$
\[
 X_{M(\alpha_2)} = X_{E_2} = u_2^{-1}(1+z_2)=\frac{1+u_1}{u_2}.
\]

(c) For $M=M(\alpha_1+\alpha_2) = P_2$, we have $\Gr_\lf(\br,M)\not = \emptyset$
only if $\br=(0,0)$, $\br=(1,0)$, or $\br=(1,1)$. 
Again in all cases $\chi(\Gr_\lf(\br,M))=1$, so $F_{M}(t_1,t_2) = 1 + t_1+t_1t_2$.
The injective coresolution of $P_2$ is $0 \to P_2 \to I_1 \to I_2 \to 0$.
Hence $g_{M} = (-1,1)$ and
\[
 X_{M(\alpha_1+\alpha_2)} = X_{P_2} = u_1^{-1}u_2(1+z_1+z_1z_2) =\frac{u_2^2+1+u_1}{u_1u_2}.
\]

(d) For $M=M(\alpha_1+2\alpha_2) = I_1$, we have 
$\Gr_\lf(\br,M)\not = \emptyset$ only if 
$\br=(0,0)$, $\br=(1,0)$, $\br=(1,1)$ or $\br=(1,2)$. 
If $\br=(1,1)$ then $\Gr_\lf(\br,M))$ is isomorphic to $\PP^1(\C)$ and 
$\chi(\Gr_\lf(\br,M))=2$. In all other cases 
$\chi(\Gr_\lf(\br,M))=1$. So $F_{M}(t_1,t_2) = 1 +t_1 + 2t_1t_2 + t_1t_2^2$.
Since $M=I_1$ is injective, $g_{M}=(-1,0)$, and
\[
 X_{M(\alpha_1+2\alpha_2)} = X_{I_1} = u_1^{-1}(1+z_1+2z_1z_2+z_1z_2^2) = \frac{u_2^2+1+2u_1+u_1^2}{u_1u_2^2}.
\] 

(e) Hence we have
\[
\begin{array}{lll}
 u_1X_{E_1} = 1+u_2^2,&
 u_2X_{E_2} = 1+u_1,&
 u_2X_{P_2} = 1+X_{E_1},\\[3mm]
 u_1X_{I_1} = 1+X_{E_2}^2,&
 X_{P_2}X_{E_2} = 1+X_{I_1}&
 X_{E_1}X_{I_1} = 1+ X_{P_2}^2.
\end{array}
 \]
This shows that the 6 cluster variables of $\AA$ are equal to
\[
 u_1,\ u_2,\ X_{E_1},\ X_{E_2},\ X_{P_2},\ X_{I_1}.
\]

(f) It is easy to see that if $M$ denotes the non rigid locally free 
indecomposable module, we have
$X_M = X_{P_2}$. 

(g) Finally, it is easy to check using the Auslander-Reiten formula and the 
Auslander-Reiten quiver of $H$ displayed
in \cite{GLS1} that the only locally free rigid $H$-modules are of the form
\[
 E_1^a\oplus P_2^b, \quad
 P_2^a\oplus I_1^b, \quad
 I_1^a\oplus E_2^b, \quad
 (a,b \ge 0).
\]
These are in one-to-one correspondence with the cluster monomials of $\AA$ 
which do not contain $u_1, u_2$.
(The clusters containing $u_1$ or $u_2$ are $(u_1,u_2)$, $(u_1,X_{E_2})$, and 
$(u_2,X_{E_1})$.)

\subsection{Type $G_2$}

We use the same notation as in \cite[\S13.9]{GLS1}.
Thus $H = H(C,D,\Omega)$ is given by the quiver
\[
\xymatrix{
1 & 2 \ar[l]^{\alpha_{12}}\ar@(ul,ur)^{\vep_2}
}
\]
with unique relation $\vep_2^3 = 0$.
The skew symmetrizable matrix $B$ is given by
\[
B=
\left(
 \begin{matrix}
  0& 3\\
  -1 & 0
 \end{matrix} 
\right).
 \]
Hence $z_1 = u_2$ and $z_2 = u_1^{-3}$.
  
(a) Easy calculations similar to those in type $B_2$ yield:
\[
 X_{M(\alpha_1)}=X_{E_1} = \frac{u_2+1}{u_1},\quad  X_{M(\alpha_2)} = X_{E_2} = \frac{1+u_1^3}{u_2},\quad 
 X_{M(\alpha_1+\alpha_2)} = X_{I_1} = \frac{u_2+1+u_1^3}{u_1u_2},
\]
\[
 X_{M(3\alpha_1+\alpha_2)} = X_{P_2} = \frac{u_2^3+3u_2^2+3u_2+1+u_1^3}{u_1^3u_2},\quad 
 X_{M(2\alpha_1+\alpha_2)} = \frac{u_2^2+2u_2+1+u_1^3}{u_1^2u_2}. 
\] 

(b) Let $M=M(3\alpha_1+2\alpha_2)$ be the largest indecomposable locally free rigid module.
One calculates
\begin{center}
\begin{tabular}{c|c|c|c|c|c|c|c|}
$\br$ & $(0,0)$ & $(1,0)$ & $(2,0)$ & $(3,0)$ & $(2,1)$ & $(3,1)$ & $(3,2)$ \\
\hline
$\chi(\Gr_\lf(\br,M))$ & 1 & 3 & 3 & 1 & 3 & 2 & 1 \\
\end{tabular}
\end{center}
which yields
\[
  X_{M(3\alpha_1+2\alpha_2)} = \frac{u_2^3+3u_2^2+3u_2+1+ 3u_2u_1^3+ 2u_1^3 + u_1^6}{u_1^3u_2^2}.
\]

(c) As can be seen from the Auslander-Reiten quiver of $H$ (see \cite[Figure 11]{GLS1}), there are two 
non-rigid indecomposable locally free $H$-modules $M_1$ and $M_2$ with rank vector $(3,2)$.
They can be identified via their graded dimension vectors:
\[
M_1 = \bsm 0&0\\1&2\\1&2\\1&2 \esm,\qquad
M_2 = \bsm 1&1\\1&2\\1&2\\0&1 \esm.
\]
Similar calculations show that 
\[
X_{M_1} = X_{M(3\alpha_1+2\alpha_2)},\qquad 
X_{M_2} = X_{M(3\alpha_1+2\alpha_2)}+2.
\]
Thus, $X_{M_2}$ is \emph{not} a cluster variable.

(d) Dually, one can also calculate some PBW basis vectors of the algebra $\cM(H)$.
As mentioned in the introduction, the support of $\theta_\ba$ is in general not reduced
to the orbit of $M_\ba$ in its representation variety 
(see \emph{e.g.} the type $B_2$ case in \cite[Section 8.1]{GLS3}).

Using the calculation of (c), one can check that the orbit of $M_2$ is contained
in the support of $\theta_{3\alpha_1+2\alpha_2}$, and also in the support of 
$\theta_{\alpha_1+\alpha_2}*\theta_{2\alpha_1+\alpha_2}$. This shows that the supports of 
two PBW basis vectors may also intersect non-trivially.

\subsection{Type $B_3$}

We use the same notation as in \cite[\S13.7]{GLS1}.
Here $H = H(C,D,\Omega)$ is given by the quiver
\[
\xymatrix{
1 \ar@(ul,ur)^{\vep_1} & 2 \ar[l]^{\alpha_{12}}\ar@(ul,ur)^{\vep_2} & 3
 \ar[l]^{\alpha_{23}}
}
\]
with relations $\vep_1^2 = \vep_2^2 = 0$, and
$\vep_1\alpha_{12} = \alpha_{12}\vep_2$.
The skew symmetrizable matrix $B$ is given by
\[
\left(
 \begin{matrix}
  0& 1 &0\\
  -1 & 0 & 1\\
  0 & -2 & 0
 \end{matrix} 
\right),
 \]
so $z_1=u_2^{-1}$, $z_2 = u_1u_3^{-2}$, and $z_3 = u_2$.

Let $M=M(\alpha_1+2\alpha_2+2\alpha_3)$. One calculates
\begin{center}
\begin{tabular}{c|c|c|c|c|c|c|c|c}
$\br$ & $(0,0,0)$ & $(1,0,0)$ & $(0,1,0)$ & $(1,1,0)$ & $(1,1,1)$ & $(1,2,0)$ & $(1,2,1)$ & $(1,2,2)$ \\
\hline
$\chi$ & 1 & 1 & 1 & 2 & 2 & 1 & 2 & 1\\
\end{tabular}
\end{center}
which yields
\[
  X_{M(\alpha_1+2\alpha_2+2\alpha_3)} = \frac{u_2u_3^4+u_3^4+3u_1u_2u_3^2+2u_1u_3^2+u_1^2+2u_1^2u_2+u_1^2u_2^2}{u_1u_2^2u_3^2}.
\]
Note that $z_2 = z_1z_2z_3$, so the two rank vectors $(0,1,0)$ and $(1,1,1)$ contribute to the same monomial 
with coefficient $1+2=3$.

\bigskip
{\parindent0cm \bf Acknowledgements.}\,
The authors thank the Mathematisches Forschungsinstitut Oberwolfach
(MFO) for two weeks of hospitality in February/March 2017 where most of this
work was done. 
The second named author thanks Kiyoshi Igusa 
and Gordana Todorov for interesting discussions at the Centre de Recerca 
Matem\'atica (CRM)  Barcelona which arose his interest in this problem.


\end{document}